\newtheorem{theorem}{Theorem}[section]
\newtheorem{corollary}[theorem]{Corollary}
\newtheorem{lemma}[theorem]{Lemma}
\newtheorem{proposition}[theorem]{Proposition}
\theoremstyle{definition}
\newtheorem{remark}[theorem]{Remark}
\def\RR{{\mathbb{R}}}
\def\NN{{\mathbb{N}}}
\def\11{\textbf{$1$}}
\def\CC{{\mathbb{C}}}
\def\FF{{\mathbb{F}}}
\newcommand{\supp}{\mathrm{supp}}
\newcommand{\dual}{{\mathfrak{D}}}
\newcommand{\ovdual}{{\mathfrak{D}}^o}
\newcommand{\fatou}{{\mathfrak{f}}}
\newcommand{\cq}{{\mathfrak{C}}_q}
\newcommand{\proj}{\mathbf{P}}
\newcommand{\sign}{\mathrm{sign} \, }
\newcommand{\one}{\mathbf{1}}
\newcommand{\ran}{\mathrm{ran} \,}
\newcommand{\spn}{\mathrm{span}}
\newcommand{\vr}{\varepsilon}
\newcommand{\DP}{\mathrm{DP}}
\newcommand{\ball}{\mathbf{B}}
\newcommand{\sph}{\mathbf{S}}
\newcommand{\expe}{\mathbb{E}}
\newcommand{\prob}{\mathbb{P}}
\def \eqalign#1{\null\,\vcenter{\openup\jot 
   \ialign{\strut\hfil$\displaystyle{##}$&$
      \displaystyle{{}##}$\hfil \crcr#1\crcr}}\,}
\begin{document}

\numberwithin{equation}{section}

\title[Almost disjointness preservers]{Almost disjointness preservers}

 \author[T. Oikhberg]{Timur Oikhberg}
 \address{Dept.~of Mathematics, University of Illinois, Urbana IL 61801, USA}
 \email{oikhberg@illinois.edu}

 \author[P. Tradacete]{Pedro Tradacete}
\address{Mathematics Department, Universidad Carlos III de Madrid, E-28911 Legan\'es, Madrid, Spain.}
\email{ptradace@math.uc3m.es}

\thanks{P.T. partially supported by the Spanish Government grants MTM2013-40985, MTM2012-31286, and Grupo UCM 910346. }

\keywords{Banach lattice; disjointness preserving}

\subjclass[2000]{47B38, 46B42}

\date{}
\maketitle

\begin{abstract}
We study the stability of disjointness preservers on Banach lattices.
In many cases, we prove that an ``almost disjointness preserving''
operator is well approximable by a disjointess preserving one.
However, this approximation is not always possible, as our
examples show.
\end{abstract}

\tableofcontents

\maketitle
\thispagestyle{empty}

\section{Introduction}\label{s:intro}

Recall that an operator $T$ between Banach lattices $E$ and $F$ is called
\emph{disjointness preserving} (\emph{DP} for short) if $Tx \perp Ty$
whenever $x \perp y$. Such operators have been
investigated intensively, and are known to possess many remarkable
properties (see e.g. \cite{Are},  \cite[Chapter 3]{M-N}, or the survey paper \cite{Hui}).
For instance, it is known that any DP operator on $C(K)$ is a
weighted composition \cite[Section 3.1]{M-N}. In \cite{OPP},
a similar result was shown for DP maps on K\"othe spaces.
For many other kinds of spaces, the general form of a DP map
is also known (see e.g. \cite{Ar04}, \cite{KN01}, \cite{Leu}).
Compact DP maps on $C(K)$ have been described in \cite{LW}.
Moreover, the inverse of a DP map is again DP, see \cite{Are}.

In this paper, we investigate the ``stability'' of being disjointness preserving.
To be more specific, suppose $E$ and $F$ are Banach lattices.
We say that an operator $T : E \to F$
is \emph{$\vr$-disjointness preserving} (\emph{$\vr$-DP} for short) if,
for any disjoint $x, y \in E$,
$$
\| |Tx| \wedge |Ty| \| \leq \vr \max\{ \|x\|, \|y\| \}.
$$
Note that $0$-DP operators are precisely the disjointness preserving operators.

Note that if $T$ is $\vr$-DP, then for any scalar $\lambda$,
$\lambda T$ is $|\lambda| \vr$-DP. Clearly, every operator $T$ is $\|T\|$-DP, so the above notion is only interesting for $\vr<\|T\|$.

The goal of this paper is to investigate the properties of $\vr$-DP operators, and furthermore, to determine whether such operators
can be approximated by disjointness preserving ones. More precisely:
for what $\vr$-DP operators $T$ does there exist a DP map $S$ with
$\|T-S\| \leq \phi(\vr,\|T\|)$, where $\lim_{\vr \to 0} \phi(\vr,t) = 0$
for every $t$?

This question has been considered previously on spaces of continuous functions. Namely, G. Dolinar \cite{Do02} (and later J. Araujo and J. Font
\cite{AF_JLMS, AF_PAMS, AF_JAT}, as well as R. Kantrowitz and M. Neumann \cite{KN})
considered a formally different notion of almost disjointness preserving operators between $C(K)$ spaces.
More precisely, suppose $E = C(K_E)$ and $F = C(K_F)$. We say that $T : E \to F$
is \emph{Dolinar $\vr-\DP$} if
$$
\| (Tx) (Ty) \| \leq \vr \|x\| \|y\|
$$
for any disjoint $x$ and $y$. It is easy to see that if $T : C(K_E) \to C(K_F)$ is Dolinar $\vr$-DP then it is $\sqrt{\vr}-\DP$; and, in the converse direction, if $T : C(K_E) \to C(K_F)$ is $\vr$-DP, then it is Dolinar $\|T\| \vr$-DP. Improving the results of \cite{Do02}, in \cite{AF_JLMS} the authors showed that if $T$ is a Dolinar $\vr-\DP$ contraction ($0 < \vr < 2/17$), then there exists a (disjointness
preserving) weighted composition operator $S$ so that $\|T - S\| < \sqrt{17\vr/2}$.
\cite{AF_PAMS} improves on this for linear functionals.

The paper is organized as follows: Section \ref{s:basics} is devoted to collecting basic facts
about $\vr$-DP operators. In Section \ref{s:ineq}, we establish
a probablistic inequality (to be used throughout our work), and list some of its consequences.

In Section \ref{s:c_0} we show that positive $\vr$-DP operators from $c_0$ or $c$
into a Banach lattice with the Fatou Property
can be nicely approximated by DP operators (Theorem \ref{t:c_0}). Our main technical tool is an inequality from Lemma \ref{l:max},
which may be of interest in its own right.

In Section \ref{s:C(K)}, we show that any $\vr$-DP operator from
a symmetric sequence space into a $\sigma$-Dedekind complete $C(K)$ space
can be approximated by DP maps (Theorem \ref{t:seq to C(K)}).

Section \ref{s:on l_1} is devoted to proving that any positive
$\vr$-DP operator from $\ell_p$ into $L_p$ is can be approximated by a DP one
(Theorems \ref{t:ell1} and \ref{t:lq->Lq}).
In Section \ref{s:to L1}, we prove similar approximation results
for operators from a sequence space with a shrinking basis to $L_1$.

In Section \ref{s:counter} we show that, for $1 \leq p < q < \infty$,
and any $\vr > 0$, there exists a positive $\vr$-DP contraction $T : \ell_p \to \ell_q$
so that $\|T - S\| \geq 1/2$ for any DP map $S$ (Proposition \ref{p:main}).
Similar results hold for operators from $\ell_p$ into a certain class of Banach lattices,
including $L_q$ (Proposition \ref{p:lower q}).

Section \ref{s:modulus} deals with the connections
between the properties of an operator and its modulus. We start by observing
that, if $T \in B(E,F)$ is regular, and $|T|$ is $\vr$-DP, then the same holds
for $T$. Under some conditions on $E$ and $F$, the converse is true
(Proposition \ref{p:modulus}). In general, Proposition \ref{p:bad mod}
provides a counterexample.

Finally, in Section \ref{s:lattice p estimates} we explore notions closely related to $\vr$-DP operators, such as almost lattice homomorphisms, and operators almost preserving expressions of the form $(|x|^p+|y|^p)^{1/p}$. Further, we explore the connections between $\vr$-DP operators, and operators ``almost preserving'' order (Proposition \ref{p:connections}). We also consider a stronger version of $\vr$-DP operators for which approximation results holds in a general setting (see Theorem \ref{t:SMP}).

Throughout this paper, we use standard Banach lattice terminology and
notation, as well as some well known facts. For more information we refer the reader
to many the excellent monographs on the topic, such as \cite{A-B} or \cite{M-N}.
For the peculiarities of complex Banach lattices, one may consult \cite{AA}.

\section{Basic facts}\label{s:basics}

We start by a few easy observations.
First, almost disjointness preservation
only needs to be verified on positive elements. More precisely:

\begin{proposition}\label{p:positive}
Suppose $E$ and $F$ are real (complex) Banach lattices. If $T \in B(E,F)$ is such that
$\||Tx| \wedge |Ty|\| \leq \vr$ for any positive disjoint
$x, y \in \ball(E)$, then $T$ is $4 \vr$-DP ($16 \vr$-DP in the complex case).
Moreover, if $T$ is positive then it is $\vr$-DP.
\end{proposition}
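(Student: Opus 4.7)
The plan is to reduce to the hypothesis (which is stated only for positive disjoint elements in the unit ball) by decomposing $x$ and $y$ into positive/negative, and in the complex case real/imaginary, parts. The easiest case is when $T \geq 0$: for arbitrary disjoint $x \perp y$, the inequalities $|Tx| \leq T|x|$ and $|Ty| \leq T|y|$ give $|Tx| \wedge |Ty| \leq T|x| \wedge T|y|$; since $|x|, |y|$ are positive and disjoint, rescaling by $M := \max\{\|x\|, \|y\|\}$ and applying the hypothesis to $|x|/M, |y|/M \in \ball(E)$ yields $\||Tx| \wedge |Ty|\| \leq \vr M$, which is exactly $\vr$-DP.

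For a general operator in the real case, write $x = x^+ - x^-$ and $y = y^+ - y^-$. Because $|x| \wedge |y| = 0$ and $x^{\pm} \leq |x|$, $y^{\pm} \leq |y|$, every $x^s$ is disjoint from every $y^t$ (for $s,t \in \{+,-\}$), and all four vectors have norm at most $M$. Combining $|Tx| \leq |Tx^+| + |Tx^-|$ and the analogous bound for $Ty$ with the lattice inequality
\[
(a_1 + a_2) \wedge (b_1 + b_2) \leq \sum_{i,j \in \{1,2\}} a_i \wedge b_j \qquad (a_i, b_j \geq 0),
\]
which is a direct consequence of the Riesz decomposition property, gives
\[
|Tx| \wedge |Ty| \leq \sum_{s,t \in \{+,-\}} |Tx^s| \wedge |Ty^t|,
\]
a sum of four terms each of norm at most $\vr M$ by hypothesis; hence $\||Tx| \wedge |Ty|\| \leq 4 \vr M$.

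The complex case is identical except that we first split $x = u_1 + i u_2$ and $y = v_1 + i v_2$ with $u_k, v_\ell$ real. The standard bounds $|u_k| \leq |x|$, $|v_\ell| \leq |y|$ together with $|x| \wedge |y| = 0$ give $u_k \perp v_\ell$ for all $k, \ell$; further decomposing each $u_k, v_\ell$ into positive and negative parts produces $4 \times 4 = 16$ disjoint positive pairs, and the same expansion of $|Tx| \wedge |Ty|$ together with $|Tx| \leq \sum_k |Tu_k^+| + |Tu_k^-|$ bounds its norm by $16 \vr M$. The only non-routine ingredient is the lattice inequality for the meet of two sums of positive elements; once that is in hand, the argument is pure bookkeeping, and I anticipate no real obstacle.
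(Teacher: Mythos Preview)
Your proposal is correct and follows essentially the same route as the paper: the positive case via $|Tz|\le T|z|$, the real case via the decomposition $x=x^+-x^-$, $y=y^+-y^-$ combined with the lattice inequality $(a_1+a_2)\wedge(b_1+b_2)\le\sum_{i,j}a_i\wedge b_j$, and the complex case by first splitting into real and imaginary parts. Your write-up is actually a bit more careful than the paper's (you make the rescaling by $M$ explicit and you name the Riesz decomposition property as the source of the meet-of-sums inequality, which the paper uses without comment).
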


\begin{proof}
Suppose first $T$ is positive.
Then, for every $z \in E$, we have $|Tz| \leq T|z|$
(see e.g. \cite[Lemma 3.22]{AA}).
If $x$ and $y$ are disjoint, then
$$
\big\| |Tx| \wedge |Ty| \big\| \leq
\big\| T |x| \wedge T |y| \big\| \leq
\vr.
$$
For general $T$, in the real case, write
$x = x_+ - x_-$, and $y = y_+ - y_-$ (here $x \perp y$).
Then
$$  \eqalign{
\big\||Tx| \wedge |Ty|\big\|
&
\leq
\big\|\big( |Tx_+| + |Tx_-| \big) \wedge \big( |Ty_+| + |Ty_-| \big)\big\| \leq
\sum_{\sigma, \delta = \pm} \big\||Tx_\sigma| \wedge |Ty_\delta|\big\|
\cr
&
\leq
\vr \sum_{\sigma, \delta = \pm} \max\{\|x_\sigma\|, \|y_\delta\|\} \leq
4 \vr.
}  $$
The complex case is dealt with similarly.
\end{proof}

Furthermore, almost disjointness preserving operators also preserve
``almost disjointness'':

\begin{proposition}\label{p:alm disj}
Suppose $E$ and $F$ are real Banach lattices, and $T \in B(E,F)$ is $\vr$-$\DP$.
Then
$$
\||Tx| \wedge |Ty|\| \leq
 4 \big( \vr  \max\{\|x\|, \|y\|\}  + \|T\| \||x| \wedge |y|\| \big)
$$
for any $x, y \in E$. In the complex case, a similar inequality holds,
with $16$ instead of $4$.
\end{proposition}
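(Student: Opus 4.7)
The plan is to reduce first to the case of positive $x,y$, following the pattern of Proposition~\ref{p:positive}, and then use a Riesz decomposition to split the pair into a disjoint part (where the $\vr$-DP property applies) and a common part (where the trivial operator-norm bound applies).

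For the reduction, in the real case write $x = x_+ - x_-$ and $y = y_+ - y_-$. The pointwise bounds $|Tz| \leq |Tz_+| + |Tz_-|$ together with the subadditivity of $\wedge$ on positive elements give
$$
|Tx|\wedge|Ty| \,\leq\, \sum_{\sigma,\delta\in\{+,-\}} |Tx_\sigma|\wedge|Ty_\delta|,
$$
exactly as in Proposition~\ref{p:positive}. Since $\|x_\sigma\|\leq\|x\|$, $\|y_\delta\|\leq\|y\|$, and $x_\sigma\wedge y_\delta \leq |x|\wedge|y|$ for each sign choice, each of the four summands is amenable to the positive-case estimate below, and summing produces the overall factor $4$. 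The complex case proceeds identically with the standard decomposition of a complex vector into four positive components, yielding the factor $16$.

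For the positive case, given $u,v\geq 0$, set $a = (u-v)_+$, $b = (v-u)_+$, $c = u\wedge v$, so that $u = a+c$, $v = b+c$, $a\perp b$, $\|a\|\leq\|u\|$, and $\|b\|\leq\|v\|$. The translation-invariance identity $(\alpha+\gamma)\wedge(\beta+\gamma) = (\alpha\wedge\beta)+\gamma$ in a vector lattice gives
$$
|Tu|\wedge|Tv| \,\leq\, \bigl(|Ta|+|Tc|\bigr)\wedge\bigl(|Tb|+|Tc|\bigr) \,=\, \bigl(|Ta|\wedge|Tb|\bigr) + |Tc|.
$$
Taking norms, using the $\vr$-DP property on the disjoint pair $(a,b)$, and bounding $\||Tc|\|\leq\|T\|\|c\|$ yields
$$
\||Tu|\wedge|Tv|\| \,\leq\, \vr\max\{\|u\|,\|v\|\} + \|T\|\,\|u\wedge v\|,
$$
which is the desired positive version of the inequality.

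The proof is essentially bookkeeping; the only point requiring a little care is to check that the four-fold sum in the reduction produces both the $\vr\max\{\|x\|,\|y\|\}$ and $\||x|\wedge|y|\|$ terms with exactly the same constant $4$, rather than inflating $\vr$ or $\|T\|$ separately.
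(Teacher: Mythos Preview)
Your proof is correct and follows essentially the same approach as the paper: both arguments first establish the positive case by writing $u = a + c$, $v = b + c$ with $c = u \wedge v$ and $a \perp b$ (the paper writes this as $x' = x - x \wedge y$, $y' = y - x \wedge y$, which is the same decomposition), then use the translation identity $(\alpha+\gamma)\wedge(\beta+\gamma) = (\alpha\wedge\beta)+\gamma$ and the $\vr$-DP hypothesis; both then reduce the general case to the positive one via $|Tx|\wedge|Ty| \leq \sum_{\sigma,\delta} |Tx_\sigma|\wedge|Ty_\delta|$ together with $x_\sigma \wedge y_\delta \leq |x|\wedge|y|$. The only difference is the order of presentation.
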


\begin{proof}
We prove the real case.
Suppose first $x$ and $y$ are positive. Then $x^\prime = x - x \wedge y$
and $y^\prime = y - x \wedge y$ are disjoint, and therefore,
$$
\| |T x^\prime| \wedge |T y^\prime| \| \leq \vr \max\{\|x^\prime\|, \|y^\prime\|\} \leq \vr \max\{\|x\|, \|y\|\} .
$$
However,
\begin{align*}  
\| |T x| \wedge |T y| \|
&
\leq
\| (|T x^\prime| + |T(x \wedge y)|) \wedge (|T y^\prime| + |T(x \wedge y)|) \|
\\
=
&
\| |T x^\prime| \wedge |T y^\prime| + |T(x \wedge y)| \| \leq
\| |T x^\prime| \wedge |T y^\prime| \| + \|T(x \wedge y)\|
\\
\leq
&
\vr \max\{\|x\|, \|y\|\}+ \|T\| \|x \wedge y\| .
\end{align*}   

For general $x,y \in E$, use the Riesz decompositions $x = x_+ - x_-$ and $y = y_+ - y_-$.
For $\sigma, \delta = \pm$, we have $x_\sigma \wedge y_\delta \leq |x| \wedge |y|$,
hence $\|x_\sigma \wedge y_\delta\| \leq \||x| \wedge |y|\|$.

By the above,
\begin{align*}
\| |T x_\sigma| \wedge |T y_\delta| \|
&
\leq
 \vr \max\{\|x_\sigma\|, \|y_\delta\|\} +
 \|T\| \|x_\sigma \wedge y_\delta\|
\\
&
\leq
 \vr \max\{\|x\|, \|y\|\} + \|T\| \| |x| \wedge |y| \| .
\end{align*}
To finish the proof, recall that
$|Tx| \wedge |Ty| \leq \sum_{\sigma, \delta = \pm} |T x_\sigma| \wedge |T y_\delta|$.
\end{proof}

Finally, we show that, if a Banach lattice $E$ is ``diffuse enough'',
and $F$ is ``atomic enough'', then the norm of a $\vr$-DP operator
from $E$ to $F$ cannot exceed $2\vr$.
We say that a Banach lattice $E$ has \emph{Fatou norm with constant $\fatou$}
if, for any non-negative increasing net $(x_i) \subset E$, with
$\sup_i \|x_i\| < \infty$, we have $\vee_i x_i \in E$, and
$\| \vee_i x_i \| \leq {\fatou} \sup_i \|x_i\|$. Recall that $x \in E_+ \backslash \{0\}$ is called an \emph{atom} of $E$
if it generates a one-dimensional principal ideal $E_x$.
In this case, $E_x$ is actually a projection band \cite[Proposition 4.18]{Schw}.
Moreover, $x$ is an atom if and only if whenever $0 \leq x_1, x_2 \leq x$, and
$x_1 \perp x_2$, then either $x_1 = 0$ or $x_2 = 0$.
A Banach lattice is called \emph{atomic} if it is generated by its atoms
as a band (see e.g. \cite[Section 2.5]{M-N}).

\begin{proposition}\label{p:->atomic}
Suppose $E$ and $F$ are Banach lattices, so that
$E$ is order continuous and has no atoms, while
$F$ is atomic, and has Fatou norm with constant $\fatou$.
If $T : E \to F$ is $\vr$-DP, then $\|T\| \leq 2\vr \fatou$.
\end{proposition}

The restriction on $E$ being order continuous is essential.
For instance, suppose $E = C(K)$, and $F$ is $1$-dimensional.
Then any scalar multiple of a point evaluation is a DP functional
(see \cite{Do02} for the proof that any $\vr$-DP functional is close
to a scalar multiple of a point evaluation).

\begin{proof}
Denote the atoms of $F$ by $(\delta_i)_{i \in I}$.
By the discussion above, for every $i\in I$, $\spn[\delta_i]$
is the range of a band projection. We denote this band
projection by $P_i$, and write $P_i x = \langle f_i, x \rangle \delta_i$,
where $f_i \in F^*_+$. For a finite set $A \subset I$, define the ``basis''
projection $Q_A = \sum_{i \in A} P_i$. It is easy to see
(cf. \cite[pp.~142-144]{Scha}) that, for any $y \in F$, the net
$(Q_A y)$ converges to $y$ in the order topology (here, the net
of finite subsets of $I$ is ordered by inclusion).

Fix $c < \|T\|$, and find $x \in E$ so that $\|x\| \leq 1$, and $\|Tx\| > c$.
Further, find a finite set $A$ so that $\|Q_A T x\| > c/\fatou$.
Let $P_x$ be the band projection corresponding to $|x|$, and denote
its image by $G$. Note that $G$ inherits the lack of atoms from $E$.
Indeed, suppose, for the sake of contradiction, that $y \in G_+$ is
an atom of $G$. By \cite[Lemma 2.7.12]{M-N}, there exist non-zero disjoint
$y_1, y_2 \in E_+$ so that $y = y_1 + y_2$. By the properties
of band projections, $y_1, y_2 \in G$.

By \cite[Theorem 1.b.4]{LT2}, we can view $G$ as a K\"othe
function space on $(\Omega, \mu)$. The proof (in conjunction with
the characterization of atoms given above) actually constructs
a measure $\mu$ without atoms. Moreover, there exist $\mu$-measurable
functions $\phi_i$ so that, for every $y \in G$,
$\langle f_i, Ty \rangle = \int_\Omega \phi_i y \, d\mu$.
By Liapounoff's Theorem (see e.g. \cite[Theorem 2.c.9]{LT2}),
there exists a subset $S \subset \Omega$ so that the equality
$$
\langle f_i, T(x \one_S) \rangle = \langle f_i, T(x \one_{S^c}) \rangle =
 \frac{\langle f_i, Tx \rangle}{2}
$$
holds for any $i \in A$. As $Q_A$ is a band projection, we have, for every $z \in F$,
$$
Q_A |z| = |Q_A z| = \sum_{i \in A} \big| \langle f_i , z \rangle \big| \delta_i .
$$
Consequently,
$$
Q_A |Tx| = \sum_{i \in A} \big| \langle f_i , Tx \rangle \big| \delta_i =
2 Q_A |T(x\one_S)| = 2 Q_A |T(x\one_{S^c})| ,
$$
hence
$$
\big\| |T(x\one_S)| \wedge |T(x\one_{S^c})| \big\| \geq
\frac12 \big\| Q_A |Tx| \big\| > \frac{c}{2\fatou} .
$$
However, $x \one_S$ and $x \one_{S^c}$ belong to $\ball(X)$, hence
$\big\| |T(x\one_S)| \wedge |T(x\one_{S^c})| \big\| \leq \vr$.
To complete the proof, recall that $c$ can be arbitrarily close to $\|T\|$.
\end{proof}

\section{A probabilistic inequality}\label{s:ineq}

The following lemma may be interesting in its own right.

\begin{lemma}\label{l:max}
Suppose $(b_i)_{i=0}^n$ is a family of non-negative numbers.
Then
$$
\expe_S \min \big\{ \sum_{i \in S} b_i ,
 \sum_{i \in S^c}  b_i \big\} \leq
 \big( \sum_{i=0}^n b_i - \max_{0 \leq i \leq n} b_i \big) \leq 2^{8}\,\expe_S \min \big\{ \sum_{i \in S} b_i ,
 \sum_{i \in S^c}  b_i \big\}.
$$
Here, the expected value is taken over all subsets $S \subset \{0, \ldots, n\}$,
with equal weight.
\end{lemma}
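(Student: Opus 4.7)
The plan is to recast the problem via random signs. For a uniformly random subset $S \subset \{0, \ldots, n\}$, define $\vr_i := 2\one_{i \in S} - 1 \in \{-1, +1\}$, which are independent uniform $\pm 1$ signs. Writing $B := \sum_{i=0}^n b_i$ and $U := \sum_{i=0}^n \vr_i b_i$, we have $\sum_{i \in S} b_i = (B+U)/2$ and $\sum_{i \in S^c} b_i = (B-U)/2$, so
\[
\min\Bigl\{\sum_{i \in S} b_i,\;\sum_{i \in S^c} b_i\Bigr\} = \frac{B - |U|}{2}.
\]
Hence both directions of the lemma reduce to two-sided estimates on $\expe |U|$.

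After relabeling so that $b_0 = M := \max_i b_i$, set $B' := B - M = \sum_{i \geq 1} b_i$ and $Y := \sum_{i=1}^n \vr_i b_i$, so $U = M \vr_0 + Y$. Conditioning on $Y$ and averaging over $\vr_0 \in \{\pm 1\}$ yields the clean identity
\[
\expe_{\vr_0} |U| = \tfrac{1}{2}\bigl(|M+Y| + |M-Y|\bigr) = \max(M, |Y|) = M + (|Y|-M)_+ ,
\]
(using $|a+b| + |a-b| = 2\max(|a|,|b|)$), hence $\expe |U| = M + \expe (|Y|-M)_+$ and
\[
\expe_S \min\Bigl\{\sum_{i \in S} b_i,\;\sum_{i \in S^c} b_i\Bigr\} = \frac{B' - \expe(|Y|-M)_+}{2}.
\]

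The upper bound is now immediate, since $(|Y|-M)_+ \geq 0$ gives $\expe \min \leq B'/2 \leq B'$. For the lower bound with constant $2^8 = 256$, it suffices to verify $\expe(|Y|-M)_+ \leq (127/128)\,B'$, which follows from a case split on the relative size of $M$ and $B'$. If $M \geq B'/128$, then $|Y| \leq B'$ gives pointwise $(|Y|-M)_+ \leq (B'-M)_+ \leq (127/128)\,B'$. If instead $M < B'/128$, then using $\max_{i \geq 1} b_i \leq M$ together with Cauchy--Schwarz,
\[
\expe(|Y|-M)_+ \leq \expe|Y| \leq \Bigl(\textstyle\sum_{i \geq 1} b_i^2\Bigr)^{1/2} \leq \sqrt{MB'} < B'/\sqrt{128} < (127/128)\,B'.
\]
Either way, $\expe \min \geq (B' - (127/128)B')/2 = B'/256$, as required. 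The only non-obvious step is the averaging identity $\expe_{\vr_0}|U| = \max(M, |Y|)$, which converts an estimate on a minimum of sums into an estimate on a positive part $(|Y|-M)_+$; after that, both bounds drop out of an elementary two-case argument together with the second-moment bound $\expe Y^2 = \sum_{i \geq 1} b_i^2$.
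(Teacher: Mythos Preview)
Your proof is correct, and the route is genuinely different from the paper's. The paper normalizes $M=1$, sets $b=B'$, and splits into the cases $b\le 2^7$ and $b>2^7$; in the first case it conditions on whether $0\in S$ and bounds $g(S)$ below by $2^{-7}\sum_{i\in S'}b_i$, while in the second case it invokes a Hoeffding-type large-deviation inequality for Rademacher sums to show that $g(S)\ge (b+1)/4$ with probability at least $0.26$. Your threshold $M\gtrless B'/128$ is exactly the same split, but your treatment of each case is different: the conditioning identity $\expe_{\vr_0}|M\vr_0+Y|=\max(M,|Y|)=M+(|Y|-M)_+$ converts the problem into bounding $\expe(|Y|-M)_+$, after which the large-$M$ case is a pointwise bound $(|Y|-M)_+\le (B'-M)_+$ and the small-$M$ case is just Cauchy--Schwarz on $\expe|Y|$. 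This is more elementary---no exponential concentration is needed---and the exact formula $\expe_S\min=\tfrac12\bigl(B'-\expe(|Y|-M)_+\bigr)$ is a nice byproduct. The paper's approach, on the other hand, yields a sharper constant ($2^{-5}$ rather than $2^{-8}$) in the regime $b>2^7$, though this is not used later.
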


\begin{proof}
Clearly, for every $S\subset\{0, \ldots, n\}$ we have
$$
\min \big\{ \sum_{i \in S} b_i , \sum_{i \in S^c}  b_i \big\} \leq \sum_{i=0}^n b_i - \max_{0 \leq i \leq n} b_i
$$
and therefore, the first inequality of the claim follows.

For the second one, without loss of generality, we can assume
$1 = b_0 \geq b_1 \geq \ldots \geq b_n \geq 0$, and set
$b = b_1 + \ldots + b_n$. For $S \subset \{0, \ldots, N\}$,
let $f(S) = \sum_{i \in S} b_i$ and $g(S) = \min\{f(S), f(S^c)\}$.

Consider two cases.

(1) $b \leq 2^7$. For $S \subset \{0, \ldots, N\}$ set $S^\prime = S$ if $0 \notin S$, and
$S^\prime = S^c$ otherwise. Then $S^\prime$ is uniformly distributed
over subsets of $\{1, \ldots, \, n\}$. Then
$$
2^{-7} \sum_{i \in S^\prime} b_i \leq 2^{-7} b \leq 1 \leq
\sum_{i \in \{0, \ldots, N\} \backslash S^\prime} b_i ,
$$
hence $g(S) \geq 2^{-7} \sum_{i \in S^\prime} b_i$. Note that $S^\prime$
is uniformly distributed over subsets of $\{1, \ldots, \, n\}$, hence
$$
\expe_S g(S) \geq 2^{-7} \expe_{S^\prime \subset \{1, \ldots, N\}} \sum_{i\in S^\prime} b_i =
2^{-7} \cdot \frac{b}{2} = 2^{-8} b .
$$

(2) $b > 2^7$. Note that $\sum_{i=0}^n b_i^2 \leq \sum_{i=0}^n b_i = b+1$.
By the large deviation inequality for Bernoulli random variables
(see e.g. \cite[Chapter 7]{MS}),
$$ \eqalign{
\prob \Big( \big| b + 1 - 2 \sum_{i \in S} b_i \big| \geq (b+1)/4 \Big)
&
\leq
2 \exp \big( - ((b+1)/4)^2 / (4(b+1)) \big)
\cr
&
=
2 e^{-(b+1)/64} < 2 e^{-1} < 0.74 .
} $$
Thus, with probability greater than $0.26$,
$$
\sum_{i \in S} b_i \in \Big[ \frac{b+1}{4} , \frac{3(b+1)}{4} \Big] ,
$$
hence $g(S) \geq (b+1)/4$. Therefore,
$$
\expe g(S) \geq 0.26 \Big( \frac{b+1}{4} \Big) > 2^{-5} b .
$$
Thus, each of the cases gives the desired result.
\end{proof}

Now an application of Krivine functional calculus (cf. \cite[Theorem 1.d.1]{LT2}) yields:

\begin{corollary}\label{c:max Lq}
If $f_1, \ldots, f_n$ are positive elements in a Banach lattice, then
$$
\expe_S \min \big\{ \sum_{i \in S} f_i , \sum_{i \in S^c}  f_i \big\} \geq
2^{-8} \Big( \sum_{i=1}^n f_i - \vee_{1 \leq i \leq n} f_i \Big) .
$$
Consequently,
$$
\expe_S \Big\| \min \big\{ \sum_{i \in S} f_i , \sum_{i \in S^c}  f_i \big\} \Big\| \geq
2^{-8} \Big\| \sum_{i=1}^n f_i - \vee_{1 \leq i \leq n} f_i \Big\| .
$$
\end{corollary}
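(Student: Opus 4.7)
The plan is to transfer the scalar inequality of Lemma~\ref{l:max} to the Banach lattice setting via Krivine's functional calculus. First I would observe that both sides of the proposed inequality are continuous, positively homogeneous functions of degree one in the variable $(f_1,\ldots,f_n) \in E_+^n$: the left-hand side is a finite average of expressions of the form $\min\{\sum_{i \in S} f_i,\sum_{i \in S^c} f_i\}$, each built from lattice sums and the operation $\wedge$, while $\sum_{i=1}^n f_i - \vee_{1\le i \le n} f_i$ is nonnegative (since $\vee_i f_i \le \sum_i f_i$) and again a homogeneous Riesz expression in $(f_1,\ldots,f_n)$.

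By Krivine's functional calculus \cite[Theorem~1.d.1]{LT2}, any inequality between continuous positively homogeneous expressions of the $f_i$'s which holds pointwise for non-negative reals automatically transfers to the Banach lattice. Thus it suffices to verify, for any non-negative scalars $b_1,\ldots,b_n$, that
$$
\expe_S \min \Big\{ \sum_{i \in S} b_i,\, \sum_{i \in S^c} b_i \Big\} \ge 2^{-8} \Big( \sum_{i=1}^n b_i - \max_{1 \le i \le n} b_i \Big),
$$
which is exactly Lemma~\ref{l:max}; the only cosmetic adjustment is a shift of the index set from $\{0,\ldots,n\}$ to $\{1,\ldots,n\}$, handled for instance by prepending a $0$ to the sequence (this leaves both $\max$ and $\sum$ unchanged, and the uniform distribution of $S$ is unchanged after intersecting with $\{1,\ldots,n\}$).

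For the norm consequence, monotonicity of the Banach lattice norm on positive elements applied to the inequality just established yields
$$
\Big\| \expe_S \min\big\{ \sum_{i \in S} f_i, \sum_{i \in S^c} f_i \big\} \Big\| \ge 2^{-8} \Big\| \sum_{i=1}^n f_i - \vee_{1 \le i \le n} f_i \Big\|,
$$
and since $\expe_S$ is a finite average of non-negative lattice elements, the triangle inequality gives $\big\| \expe_S \min\{\cdot\} \big\| \le \expe_S \big\| \min\{\cdot\} \big\|$. Chaining the two inequalities yields the claim.

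No substantive obstacle arises here; the only point requiring care is to check cleanly that the two expressions being compared lie within the scope of Krivine's calculus, i.e.\ are continuous and positively $1$-homogeneous in $(f_1,\ldots,f_n)$, after which both parts of the statement are immediate.
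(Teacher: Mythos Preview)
Your proposal is correct and follows exactly the paper's approach: the paper's entire proof is the single line ``Now an application of Krivine functional calculus (cf.~\cite[Theorem 1.d.1]{LT2}) yields,'' and you have simply spelled out what that application entails, including the routine passage from the lattice inequality to the norm inequality via monotonicity and the triangle inequality.
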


As a consequence, we have:

\begin{corollary}\label{c:arb number}
Suppose $T:E\rightarrow F$ is a positive operator which is $\vr$-DP.
Then, for any disjoint $x_1, \ldots, x_n \in E$, we have
$$
\Big\| \sum_{i=1}^n |Tx_i| - \bigvee_{i=1}^n |Tx_i| \Big\| \leq 256 \vr \Big\|\sum_{i=1}^n x_i\Big\|.
$$
In particular, for any disjoint $x_1, \ldots, x_n \in E$ and every $1\leq p<\infty$ it also holds that
$$
\Big\| \Big(\sum_{i=1}^n |Tx_i|^p\Big)^{\frac1p} - T\Big(\sum_{i=1}^n |x_i|^p\Big)^{\frac1p} \Big\| \leq 256 \vr\Big\|\sum_{i=1}^n x_i\Big\|.
$$
\end{corollary}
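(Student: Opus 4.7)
My plan is to apply Corollary~\ref{c:max Lq} to the positive elements $f_i=|Tx_i|$ in $F$ and then control the random infima on its right-hand side by combining the positivity of $T$ with the $\vr$-DP hypothesis.

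First, for any $S\subset\{1,\ldots,n\}$, the positivity of $T$ yields $|Tx_i|\leq T|x_i|$, so taking componentwise infima I get
\begin{equation*}
\Big(\sum_{i\in S}|Tx_i|\Big)\wedge\Big(\sum_{i\in S^c}|Tx_i|\Big)\leq T\Big(\sum_{i\in S}|x_i|\Big)\wedge T\Big(\sum_{i\in S^c}|x_i|\Big).
\end{equation*}
The elements $\sum_{i\in S}|x_i|$ and $\sum_{i\in S^c}|x_i|$ are positive and disjoint (since the $x_i$ are disjoint, so are the $|x_i|$). I would then apply the $\vr$-DP property to this pair, together with the identity $|\sum x_i|=\sum|x_i|$ (valid for disjoint $x_i$), which gives $\|\sum|x_i|\|=\|\sum x_i\|$, and monotonicity of the lattice norm to bound $\max\{\|\sum_{i\in S}|x_i|\|,\|\sum_{i\in S^c}|x_i|\|\}\leq\|\sum x_i\|$. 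This estimates each term in the expectation by $\vr\|\sum x_i\|$, and invoking Corollary~\ref{c:max Lq} produces the first inequality $\|\sum|Tx_i|-\bigvee|Tx_i|\|\leq 256\vr\|\sum x_i\|$.

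For the $p$-version, I would note that since the $|x_i|$ are pairwise disjoint, Krivine's functional calculus gives $(\sum|x_i|^p)^{1/p}=\sum|x_i|$, hence $T(\sum|x_i|^p)^{1/p}=\sum T|x_i|$. In the positive case $x_i\geq 0$ this equals $\sum|Tx_i|$, and Krivine's calculus also yields $\bigvee|Tx_i|\leq(\sum|Tx_i|^p)^{1/p}\leq\sum|Tx_i|$, so
\begin{equation*}
0\leq\sum|Tx_i|-\Big(\sum|Tx_i|^p\Big)^{1/p}\leq\sum|Tx_i|-\bigvee|Tx_i|,
\end{equation*}
and the second inequality follows immediately from the first by monotonicity of the norm. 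The main obstacle will be the signed case, where $\sum T|x_i|$ differs from $\sum|Tx_i|$; I plan to handle it by applying the first (already proved) inequality to the $2n$ disjoint positive elements $x_1^+,x_1^-,\ldots,x_n^+,x_n^-$, whose sum is $\sum|x_i|$ with norm $\|\sum x_i\|$, and bridging the residual gap via the identity $T|x_i|-|Tx_i|=2(Tx_i^+\wedge Tx_i^-)$, each summand of which is controlled by $\vr$-DP applied to the disjoint pair $(x_i^+,x_i^-)$.
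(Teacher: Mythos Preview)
Your argument for the first inequality is essentially the paper's: apply Corollary~\ref{c:max Lq} to $f_i=|Tx_i|$ and bound each random infimum by $\vr\,\|\sum x_i\|$ via positivity and the $\vr$-DP hypothesis. For the $p$-version with $x_i\ge 0$ you also match the paper, using $T\big(\sum|x_i|^p\big)^{1/p}=\sum T|x_i|=\sum|Tx_i|$ together with the pointwise sandwich $\bigvee|Tx_i|\le\big(\sum|Tx_i|^p\big)^{1/p}\le\sum|Tx_i|$. (The paper's displayed chain $0\le\big(\sum|Tx_i|^p\big)^{1/p}-T\big(\sum|x_i|^p\big)^{1/p}$ tacitly relies on this positive case; for signed $x_i$ one has $\sum T|x_i|\ge\sum|Tx_i|$, so the sign reverses and you were right to flag the issue.)

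Your plan for the signed case has a gap, however. Controlling each $T|x_i|-|Tx_i|=2(Tx_i^+\wedge Tx_i^-)$ by applying $\vr$-DP to the pair $(x_i^+,x_i^-)$ yields only $\|Tx_i^+\wedge Tx_i^-\|\le\vr\,\|x_i\|$, and summing these gives $2\vr\sum_i\|x_i\|$, which in a general Banach lattice is \emph{not} dominated by a constant times $\|\sum_i x_i\|$. The repair is to bound the whole sum at once: since $a\wedge b\le a$ and $a\wedge b\le b$, one has
\[
\sum_i(Tx_i^+\wedge Tx_i^-)\le\Big(\sum_i Tx_i^+\Big)\wedge\Big(\sum_i Tx_i^-\Big)=T\Big(\sum_i x_i\Big)_+\wedge T\Big(\sum_i x_i\Big)_- ,
\]
using $\sum_i x_i^\pm=(\sum_i x_i)_\pm$ for disjoint $x_i$; a \emph{single} application of $\vr$-DP to the disjoint pair $\big((\sum x_i)_+,(\sum x_i)_-\big)$ then bounds this by $\vr\,\|\sum x_i\|$. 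Combined with the first inequality (applied to the original $x_i$) this gives the $p$-estimate with constant $258$ in place of $256$.
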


\begin{proof}
For any $S \subset \{1, \ldots, n\}$, we have
$$
\Big\| \big( \sum_{i \in S} |Tx_i| \big) \bigwedge
 \big( \sum_{i \in S^c} |Tx_i| \big) \Big\| \leq \Big\| T\big| \sum_{i \in S} x_i \big| \bigwedge
 T\big| \sum_{i \in S^c} x_i \big| \Big\| \leq \vr \|\sum_{i=1}^n x_i\| .
$$
Now apply Corollary \ref{c:max Lq}, with $f_i = Tx_i$.

For the second inequality, note that for every $1\leq p<\infty$ we have
$$
0\leq\Big(\sum_{i=1}^n |Tx_i|^p\Big)^{\frac1p} - T\Big(\sum_{i=1}^n |x_i|^p\Big)^{\frac1p}\leq \sum_{i=1}^n |Tx_i| - \bigvee_{i=1}^n |Tx_i|
$$
\end{proof}

\begin{corollary}\label{c:arb max min}
Suppose the operator $T \in B(E,F)_+$ is $\vr$-DP,
and $E$ is $\sigma$-Dedekind complete.
Then, for any $x_1, \ldots, x_n \in E_+$, we have
$$
\max \Big\{ \big\|T(\vee_{i=1}^n x_i) - \vee_{i=1}^n (Tx_i) \big\| ,
 \big\|\wedge_{i=1}^n (Tx_i) - T (\wedge_{i=1}^n x_i) \big\| \Big\}
 \leq 256 \vr \big\|\vee_{i=1}^n x_i\big\| .
$$
\end{corollary}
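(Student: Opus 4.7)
The plan is to reduce both estimates to the disjoint case (Corollary \ref{c:arb number}). The hypothesis that $E$ is $\sigma$-Dedekind complete enters only to supply the following disjointification: given $x_1,\dots,x_n\in E_+$, there exist pairwise disjoint $y_1,\dots,y_n\in E_+$ with $y_i\le x_i$ and $\sum_i y_i=\bigvee_i x_i$. To construct them I induct on $n$. Suppose $\tilde y_1,\dots,\tilde y_{k-1}$ have already been built with the required properties and with $\sum_{i<k}\tilde y_i=\bigvee_{i<k}x_i=:u$. Since $E$ is $\sigma$-Dedekind complete, the principal band generated by $(x_k-u)_+$ is a projection band; let $P$ be the corresponding band projection. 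One checks $Px_k\ge Pu$ and $P^\perp x_k\le P^\perp u$, so the new family $y_k:=Px_k$ together with $y_i:=P^\perp \tilde y_i$ for $i<k$ is pairwise disjoint, satisfies $y_i\le x_i$, and has sum $\sum_{i\le k}y_i=Px_k+P^\perp u=u\vee x_k$.

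With the $y_i$'s in hand, Corollary \ref{c:arb number} yields
$$
\Big\|\sum_{i=1}^n Ty_i-\bigvee_{i=1}^n Ty_i\Big\|\le 256\vr\Big\|\sum_{i=1}^n y_i\Big\|=256\vr\Big\|\bigvee_{i=1}^n x_i\Big\|.
$$
Because $T$ is positive and $y_i\le x_i$, one has $T(\bigvee_i x_i)=\sum_i Ty_i$ and $\bigvee_i Ty_i\le\bigvee_i Tx_i\le T(\bigvee_i x_i)$; hence
$$
0\le T\Big(\bigvee_i x_i\Big)-\bigvee_i Tx_i\le \sum_i Ty_i-\bigvee_i Ty_i,
$$
and the join estimate follows upon taking norms.

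The meet estimate is then obtained by a duality trick. Set $x_i':=\bigvee_j x_j-x_i\in E_+$. A short computation gives $\bigvee_i x_i'=\bigvee_j x_j-\bigwedge_i x_i\le\bigvee_j x_j$ (so $\|\bigvee_i x_i'\|\le\|\bigvee_j x_j\|$), as well as $\bigvee_i Tx_i'=T(\bigvee_j x_j)-\bigwedge_i Tx_i$ and $T(\bigvee_i x_i')=T(\bigvee_j x_j)-T(\bigwedge_i x_i)$, whence
$$
T\Big(\bigvee_i x_i'\Big)-\bigvee_i Tx_i'=\bigwedge_i Tx_i-T\Big(\bigwedge_i x_i\Big).
$$
Applying the just-established join estimate to $(x_i')$ bounds the left-hand side by $256\vr\|\bigvee_j x_j\|$, proving the meet estimate.

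The only nontrivial step is the disjointification, and it is precisely there that $\sigma$-Dedekind completeness of $E$ is essential: without it, finite disjointification may fail, as is already witnessed by $C[0,1]$ with $x_1(t)=t$ and $x_2(t)=1-t$.
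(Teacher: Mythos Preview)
Your argument is correct. The meet estimate is handled exactly as in the paper, via the substitution $x_i'=\bigvee_j x_j-x_i$, so the only point of comparison is the join estimate.

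There the paper takes a more roundabout path: it invokes the Principal Projection Property and the Freudenthal Spectral Theorem to approximate each $x_i$ by a simple function $u_i$ built from components of $x=\sum_j x_j$, refines to a common family of components, and then reads off disjoint $y_i\le u_i$ with $\sum_i y_i=\bigvee_i u_i$ by selecting, for each component, the index achieving the maximal coefficient. An $\varepsilon$-approximation argument (letting the error $c\to 0$) then transfers the estimate from the $u_i$ back to the $x_i$. Your inductive disjointification via band projections onto $(x_k-u)_+$ reaches the same endpoint---disjoint $y_i\le x_i$ with $\sum_i y_i=\bigvee_i x_i$---directly and exactly, with no approximation step and no appeal to Freudenthal. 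Both routes use $\sigma$-Dedekind completeness only to guarantee that the relevant principal bands are projection bands; your argument makes this dependence more transparent and is shorter overall.
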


\begin{proof}
First prove that
\begin{equation}
\label{eq:sum to max}
\big\|T(\vee_{i=1}^n x_i) - \vee_{i=1}^n (Tx_i) \big\|
 \leq 256 \vr \big\|\vee_{i=1}^n x_i\big\| .
\end{equation}
Fix $c > 0$, and let $x = x_1 + \ldots + x_n$.
Let ${\mathcal{C}}$ be the set of
\emph{components} of $x$ -- that is, of vectors $y \in E_+$
satisfying $y \wedge (x - y) = 0$. By \cite[Theorem 1.49]{A-B},
${\mathcal{C}}$ is closed under the operations $\vee$ and $\wedge$.
Moreover, if $u, v \in {\mathcal{C}}$ are such that $u \leq v$,
then $v - u \in {\mathcal{C}}$.
Finite linear combinations of elements of ${\mathcal{C}}$ are called
\emph{simple functions}.

By \cite[Proposition 1.2.20]{M-N}, $E$ has the Principal Projection property.
By Freudenthal Spectral Theorem (see e.g. \cite[Theorem 2.8]{A-B},) for every $i$
there exists a simple function
$u_i$ so that $0 \leq x_i - u_i \leq c |x|/\|x\|$ (hence $\|u_i - x_i\| \leq c$).
By considering $u_i \vee 0$ instead of $u_i$, we can assume that all the $u_i$'s
are non-negative. Write $u_i = \sum_{j=1}^{N_i} \alpha_{ij} v_{ij}$, where
$\alpha_{ij} > 0$ and $(v_{ij})_{j=1}^{N_i}$ are disjoint components of $x$.
By the discussion above, the elements $\wedge_{i=1}^n v_{i j_i}$ for any $j_i\leq N_i$ are
disjoint components of $x$, and therefore, there exists a family $(w_j)_{j=1}^M$
of disjoint components of $x$, so that for each $i$ we can write
$u_i = \sum_{j=1}^M \beta_{ij} w_j$. Note that $\vee_i u_i = \sum_j \beta_j w_j$,
where $\beta_j = \vee_i \beta_{ij}$.

Define the sets $(A_i)$ recursively by setting $A_0 = \emptyset$, and
$A_i = \{j : \beta_{ij} = \beta_j\} \backslash \cup_{s < i} A_s$.
These sets are clearly disjoint, and their union is $\{1, \ldots, M\}$.
For $1 \leq i \leq n$ set $y_i = \sum_{j \in A_i} \beta_j w_j$.
Then $0 \leq y_i \leq u_i$, the $y_i$'s are disjoint, and
$\vee_i y_i = \vee_i u_i \leq \vee_i x_i$.
Thus,
$$
T(\vee_{i=1}^n u_i) - \vee_{i=1}^n (Tu_i) \leq
T(\vee_{i=1}^n y_i) - \vee_{i=1}^n (Ty_i) =
T(\sum_{i=1}^n y_i) - \vee_{i=1}^n (Ty_i) .
$$
By Corollary \ref{c:arb number},
\begin{equation}
\begin{aligned}
\label{eq:vee u_i y_i}
\big\|T(\vee_{i=1}^n u_i) - \vee_{i=1}^n (Tu_i)\big\|
&
\leq
\big\|T(\sum_{i=1}^n y_i) - \vee_{i=1}^n (Ty_i)\big\|
\\
&
\leq
256 \vr \big\| \sum_{i=1}^n y_i \big\| \leq 256 \vr \big\| \vee_{i=1}^n x_i \big\| .
\end{aligned}
\end{equation}

For each $i$ write $x_i = u_i + z_i$, where $z_i \geq 0$, and $\|z_i\| \leq c$.

In this notation,
$$
\bigvee_{i=1}^n x_i \leq \bigvee_{i=1}^n u_i+\bigvee_{i=1}^n z_i,
$$
and therefore,
$$
\Big\| \bigvee_{i=1}^n x_i - \bigvee_{i=1}^n u_i \Big\| \leq
 n c.
$$
From this, we conclude that
$$
\big\|T(\vee_{i=1}^n x_i) - \vee_{i=1}^n (Tx_i) \big\| \leq
\big\|T(\vee_{i=1}^n u_i) - \vee_{i=1}^n (Tu_i) \big\| +
n c \|T\| .
$$
To obtain \eqref{eq:sum to max}, invoke \eqref{eq:vee u_i y_i},
and recall that $c$ can be arbitrarily small.

To obtain the inequality
\begin{equation}
\label{eq:sum to min}
\big\|\wedge_{i=1}^n (Tx_i) - T (\wedge_{i=1}^n x_i) \big\|
 \leq 256 \vr \big\|\vee_{i=1}^n x_i\big\| ,
\end{equation}
set $x = \vee_{i=1}^n x_i$. For each $i$ set $y_i = x - x_i$,
then $0 \leq y_i \leq x$. We have
$\vee_{i=1}^n y_i = x + \vee_{i=1}^n (y_i - x) = x - \wedge_{i=1}^n x_i$,
hence $T (\wedge_{i=1}^n x_i) = Tx - T(\vee_{i=1}^n y_i)$. Similarly,
$\vee_{i=1}^n T y_i = T x + \vee_{i=1}^n (T(y_i - x)) = T x - \wedge_{i=1}^n T x_i$,
which yields $\wedge_{i=1}^n T x_i = Tx - \vee_{i=1}^n T y_i$.
Therefore,
$$
\wedge_{i=1}^n (Tx_i) - T (\wedge_{i=1}^n x_i) =
T(\vee_{i=1}^n y_i) - \vee_{i=1}^n (Ty_i) .
$$
To obtain \eqref{eq:sum to min}, combine \eqref{eq:sum to max}
with the fact that $\vee_{i=1}^n y_i \leq x$.
\end{proof}

It was shown in \cite{Abr} that for any r.i. spaces $X, Y$ over a finite measure such that $X\nsubseteq Y$ there is no non-zero disjointness preserving operator $T:X\rightarrow Y$. In particular, the only disjointness preserving operator $T:L_p[0,1]\rightarrow L_q[0,1]$ for $p>q$ is $T=0$. An application of Corollary \ref{c:arb number} provides the following version of this fact for positive $\vr$-DP operators:

\begin{proposition}\label{p:norm Lp}
Let $1\leq p<q\leq\infty$ and $E$ be a $q$-convex Banach lattice. If $T:L_p[0,1]\rightarrow E$ is positive, $\vr$-DP , then $\|T\|\leq 256\vr$.
\end{proposition}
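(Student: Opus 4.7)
The approach is to exploit the scaling mismatch between $L_p[0,1]$ and a $q$-convex lattice when $p<q$: any positive $L_p$-function of norm $1$ can be split into $n$ disjoint equi-normed pieces $f_i$, each of $L_p$-norm $n^{-1/p}$, so that $(\sum_i \|f_i\|_p^q)^{1/q}=n^{1/q-1/p}\to 0$. Combining this with Corollary \ref{c:arb number} (which allows us to replace $Tf=\sum_i Tf_i$ by $\bigvee_i Tf_i$ up to $256\vr$) and with the $q$-convexity of $E$ (which turns the supremum $\bigvee_i Tf_i$ into a vanishing $\ell_q$-sum) will force $\|Tf\|\leq 256\vr$.

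Concretely, I would fix $f\in L_p[0,1]_+$ with $\|f\|_p=1$ and, using non-atomicity of Lebesgue measure applied to the measure $f^p\,d\mu$, partition $[0,1]$ into measurable sets $A_1,\dots,A_n$ with $\int_{A_i}f^p=1/n$; setting $f_i=f\chi_{A_i}$ yields pairwise disjoint positive vectors summing to $f$ with $\|f_i\|_p=n^{-1/p}$. Corollary \ref{c:arb number} gives
$$\Big\|\sum_{i=1}^n Tf_i-\bigvee_{i=1}^n Tf_i\Big\|\leq 256\vr\|f\|_p=256\vr,$$
and since $Tf_i\geq 0$ with $Tf=\sum_i Tf_i\geq\bigvee_i Tf_i\geq 0$, this rearranges to $\|Tf\|\leq\bigl\|\bigvee_i Tf_i\bigr\|+256\vr$. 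Using the pointwise bound $\bigvee_i Tf_i\leq(\sum_i(Tf_i)^q)^{1/q}$ and $q$-convexity of $E$ with constant $M^{(q)}(E)$,
$$\Big\|\bigvee_{i=1}^n Tf_i\Big\|\leq M^{(q)}(E)\Big(\sum_{i=1}^n\|Tf_i\|^q\Big)^{1/q}\leq M^{(q)}(E)\,\|T\|\,n^{1/q-1/p}.$$
Since $p<q$, this tends to $0$ as $n\to\infty$, so $\|Tf\|\leq 256\vr$; as $T$ is positive, its norm is attained on positive unit vectors and hence $\|T\|\leq 256\vr$.

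I do not anticipate a real obstacle: the equi-partition of $[0,1]$ is a routine consequence of the non-atomicity of Lebesgue measure (the scalar form of the Liapounoff argument already used in Proposition \ref{p:->atomic}), and the case $q=\infty$ is handled identically by reading $(\sum_i\|Tf_i\|^q)^{1/q}$ as $\max_i\|Tf_i\|\leq\|T\|n^{-1/p}$. The only mildly delicate point is that the $q$-convexity constant $M^{(q)}(E)$ appears in the intermediate estimate but does not survive the limit, which is what allows the clean bound $256\vr$ without dependence on $E$.
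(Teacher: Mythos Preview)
Your proposal is correct and follows essentially the same approach as the paper: split a positive norm-one $f$ into $n$ disjoint equi-normed pieces via Liapounoff, apply Corollary~\ref{c:arb number}, control the residual term through $q$-convexity, and let $n\to\infty$. The only cosmetic difference is that the paper invokes the second inequality of Corollary~\ref{c:arb number} (with the $(\sum|\cdot|^q)^{1/q}$ expression) directly, whereas you use the first inequality (with $\bigvee$) and then insert the pointwise bound $\bigvee_i Tf_i\le(\sum_i(Tf_i)^q)^{1/q}$; the two routes are equivalent.
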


\begin{proof}
Given a positive $x\in L_p[0,1]$ with $\|x\|_p=1$, for every $n\in\mathbb N$, an application of Liapunov's theorem \cite[Theorem 2.c.9]{LT2} allows us to find a partition of $[0,1]$ in pairwise disjoint measurable sets $(A_i)_{i=1}^n$ such that $\|x\chi_{A_i}\|_p=n^{-1/p}$. Let $x_i=x\chi_{A_i}$, for $i=1,\ldots, n$. We have that $(x_i)_{i=1}^n$ are disjoint and $x=\sum_{i=1}^n x_i$.

Since $E$ is $q$-convex, there is a constant $C>0$ so that
$$
\Big\|\Big(\sum_{i=1}^n |Tx_i|^q\Big)^{\frac1q}\Big\|\leq C\Big(\sum_{i=1}^n \|Tx_i\|^q\Big)^{\frac1q}\leq C\|T\| n^{\frac1q-\frac1p}.
$$
Hence, using Corollary \ref{c:arb number}, we have
\begin{align*}
\|Tx\|&\leq \Big\|T\Big(\sum_{i=1}^n x_i\Big)-\Big(\sum_{i=1}^n |Tx_i|^q\Big)^{\frac1q}\Big\|+\Big\|\Big(\sum_{i=1}^n |Tx_i|^q\Big)^{\frac1q}\Big\|\\
&=\Big\|T\Big(\sum_{i=1}^n |x_i|^q\Big)^{\frac1q}-\Big(\sum_{i=1}^n |Tx_i|^q\Big)^{\frac1q}\Big\|+\Big\|\Big(\sum_{i=1}^n |Tx_i|^q\Big)^{\frac1q}\Big\|\\
&\leq 256\vr+C\|T\| n^{\frac1q-\frac1p}.
\end{align*}
Since $p<q$ and $n$ was arbitrary, we get that $\|T\|\leq 256\vr$.
\end{proof}

\section{Positive operators on $\ell_\infty^n$, $c_0$ and $c$}\label{s:c_0}

Recall that a Banach lattice $X$ has \emph{Fatou Property with constant $\fatou$}
if, for any non-negative increasing net $(x_i) \subset X$, with
$\sup_i \|x_i\| < \infty$, we have $\vee_i x_i \in X$, and
$\| \vee_i x_i \| \leq {\fatou} \sup_i \|x_i\|$.
If $\fatou = 1$, we speak simply of the \emph{Fatou Property}. Every Banach lattice with the Fatou property is $\sigma$-Dedekind complete. Note that, if $X$ is a K\"othe function space, then it suffices to
verify the above inequality for non-negative increasing sequences $(x_i)$.
By \cite[Proposition 2.4.19]{M-N}, any dual Banach lattice has the Fatou
Property. Also, by \cite[Section 1.a]{LT2}, any order continuous Banach lattice
has the Fatou Property.

\begin{theorem}\label{t:c_0}
Suppose $F$ is a Banach lattice, and consider $\vr > 0$.
\begin{enumerate}
\item
For any positive operator $T : \ell_\infty^n \to F$, which is $\vr$-DP,
there exists a DP operator $S : \ell_\infty^n \to F$, so that $0 \leq S \leq T$,
and $\|T - S\| \leq 256 \vr$.
\item
Suppose $F$ has the Fatou Property with constant $\fatou$, then for any
positive operator $T : c_0 \to F$, which is $\vr$-DP,
there exists a DP operator $S : c_0 \to F$, so that $0 \leq S \leq T$,
and $\|T - S\| \leq 256 \fatou \vr$.
\item
Suppose $F$ has the Fatou Property with constant $\fatou$, then for any
positive operator $T : c \to F$, which is $\vr$-DP,
there exists a DP operator $S : c \to F$, so that $0 \leq S \leq T$,
and $\|T - S\| \leq 256 \fatou^2 \vr$.
\end{enumerate}
\end{theorem}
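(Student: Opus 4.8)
The plan is to reduce everything to the action of $T$ on the atoms $e_i$ and to \emph{disjointify} the images $f_i := Te_i \geq 0$. Since $T$ is positive, so is every $S$ with $0 \leq S \leq T$, and the norm of a positive operator out of $\ell_\infty^n$ (resp.\ $c_0$, $c$) is attained on the order unit $\one = \sum_i e_i$ (resp.\ on the increasing sequence $\one_n = \sum_{i\leq n} e_i$). Hence, for the operators I build, controlling $\|T-S\|$ amounts to controlling $\|\sum_i (f_i - g_i)\|$, where $g_i := Se_i$; moreover a positive $S$ is DP precisely when the $g_i$ are pairwise disjoint, and $0 \leq S \leq T$ precisely when $0 \leq g_i \leq f_i$. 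The key quantitative input is Corollary \ref{c:arb number} applied to the disjoint family $x_i = e_i$ (with $\|\sum_{i\leq n} e_i\| = 1$), which yields $\big\|\sum_{i\leq n} f_i - \vee_{i\leq n} f_i\big\| \leq 256\vr$ and supplies the constant $256$.

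For (1) I would construct pairwise disjoint $g_1,\dots,g_n$ with $0 \leq g_i \leq f_i$ and $\sum_{i=1}^n g_i = \vee_{i=1}^n f_i$, and then set $Se_i = g_i$. The natural construction is an ``argmax with tie-breaking by index'': writing $m = \vee_{i\leq n} f_i$, let $g_i$ be the component of $m$ carried by the band on which $f_i = m$ while no $f_j$ with $j<i$ does. Then the $g_i$ are disjoint, $g_i \leq f_i$ (on its band $m=f_i$), and $\sum_i g_i = m$, so
\[
\|T - S\| = \Big\|\sum_{i\leq n} f_i - \sum_{i\leq n} g_i\Big\| = \Big\|\sum_{i\leq n} f_i - \bigvee_{i\leq n} f_i\Big\| \leq 256\vr .
\]
The first serious obstacle is this disjointification: the band components of $m$ must actually exist in $F$. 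This is automatic once $F$ is $\sigma$-Dedekind complete, but is the delicate point for a general $F$ in part (1); I would either carry out the construction inside the (finitely generated) band generated by the $f_i$, where the finite suprema live, or, if exactness fails, fall back on the disjoint family $g_i = (f_i - \vee_{j\neq i} f_j)_+$ and absorb the resulting loss into the constant.

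For (2) and (3) I would bootstrap from the finite-dimensional case by a limiting argument governed by the Fatou property. The increasing, norm-bounded net $(\vee_{i\leq n} f_i)$ has a supremum $m = \vee_i f_i \in F$ with $\|m\| \leq \fatou\|T\|$; performing the same argmax-disjointification against $m$ produces disjoint $g_i \leq f_i$ with $\sum_i g_i = m$, so that $Se_i = g_i$ defines a positive DP operator $S : c_0 \to F$ with $0 \leq S \leq T$. Passing to the limit $n\to\infty$ in the finite estimates of part (1), the Fatou constant enters exactly once, giving $\|T - S\| \leq 256\,\fatou\,\vr$. For $c = c_0 \oplus \langle\one\rangle$ I would treat the extra ``limit'' coordinate $T\one$ separately and reconcile it with the $c_0$-part; this reconciliation forces a second appeal to the Fatou property, which is the source of the $\fatou^2$.

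The two points I expect to be hardest are therefore: (i) the exact disjointification in part (1), together with the care required to keep the components $g_i$ inside $F$ and to handle ties; and (ii) the bookkeeping in the passage to the limit for $c_0$ and $c$, where the Fatou property must be invoked with precisely the right multiplicity to recover the stated constants $256\,\fatou\,\vr$ and $256\,\fatou^2\,\vr$.
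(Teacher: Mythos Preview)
Your overall plan---disjointify the images $f_i=T e_i$, then bound $\|T-S\|=\|\sum_i(f_i-g_i)\|$ via the probabilistic inequality---is exactly the paper's strategy, and your outline for (2) and (3) is sound. But your primary construction for (1) has a genuine gap: the ``argmax with tie-breaking'' disjointification uses band projections, and these need not exist in a general Banach lattice $F$ (part (1) assumes nothing about $F$). Your proposed fix of working inside the band generated by $f_1,\dots,f_n$ does not help: finitely generated bands in, say, $C[0,1]$ still lack nontrivial band projections.

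The good news is that your \emph{fallback} is essentially the paper's argument, and you are underselling it. The choice $g_i=(f_i-\vee_{j\ne i}f_j)_+$ uses only finite lattice operations, hence lives in any $F$; the $g_i$ are pairwise disjoint with $0\le g_i\le f_i$ by the pointwise identity $\min\bigl((t_i-\max_{k\ne i}t_k)_+,(t_j-\max_{k\ne j}t_k)_+\bigr)=0$ and Krivine functional calculus. The paper does the same thing with a mildly ``softened'' continuous, positively homogeneous function
\[
\phi_n(t_1,\dots,t_n)=
\begin{cases}
0,& t_1\le \vee_{i\ge 2}|t_i|,\\
2\bigl(t_1-\vee_{i\ge 2}|t_i|\bigr),& \vee_{i\ge 2}|t_i|\le t_1\le 2\vee_{i\ge 2}|t_i|,\\
t_1,& t_1\ge 2\vee_{i\ge 2}|t_i|,
\end{cases}
\]
and sets $g_i=\phi_n(f_i,f_1,\dots,f_{i-1},f_{i+1},\dots,f_n)$. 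Either way one gets the pointwise bound
\[
\sum_{i}(t_i-g_i)\ \le\ 2t_{(2)}+t_{(3)}+\dots+t_{(n)}\ \le\ 2\Bigl(\sum_i t_i-\max_i t_i\Bigr),
\]
and then Lemma~\ref{l:max}/Corollary~\ref{c:max Lq} converts this into the desired norm estimate (your exact identity $\sum_i g_i=\vee_i f_i$ is not needed). So: drop the band-projection route and run your fallback as the main argument.

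For (2) and (3) the paper likewise avoids band projections: it defines $g_i^{(n)}$ via $\phi_n$ on each finite truncation, observes that $g_i^{(n)}$ is decreasing in $n$, and uses $\sigma$-Dedekind completeness (a consequence of the Fatou property) only to form $g_i=\wedge_n g_i^{(n)}$; the Fatou constant then enters once in (2) and twice in (3), exactly as you anticipated. Your alternative---disjointifying against the global $m=\vee_i f_i$ with band projections---can be made to work here since Fatou gives $\sigma$-Dedekind completeness, but the bookkeeping (relating $\sum_{i\le n}g_i$ to $\vee_{i\le n}f_i$ rather than to $m$) is more delicate than the paper's limit-of-finite-stages approach.
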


The following lemma is needed to prove Theorem \ref{t:c_0}. This result may be known to the experts, but we haven't been able
to find it in the literature.

\begin{lemma}\label{l:sup}
Suppose that for $1\leq i\leq k$, $(x_n^{(i)})_{n\in\mathbb N}$ are increasing positive sequences in a
Banach lattice, so that $\vee_{n\in\mathbb N} x_n^{(i)}$ for $1\leq i\leq k$
and $\bigvee_{n\in \mathbb N} \Big(\sum_{i=1}^k x_n^{(i)}\Big)$ exist. Then
$$
\bigvee_{n\in \mathbb N} \Big(\sum_{i=1}^k x_n^{(i)}\Big) = \sum_{i=1}^k \bigvee_{n\in \mathbb N} x_n^{(i)}.
$$
\end{lemma}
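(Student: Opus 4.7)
The plan is to reduce to the case $k=2$ by an easy induction, and then handle the case $k=2$ using only the fact that both sequences are \emph{increasing} together with the standard translation invariance $\vee (c+S)=c+\vee S$ of suprema in a Riesz space.

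For the inductive step, assuming the identity holds for $k-1$, set $y_n = \sum_{i=1}^{k-1} x_n^{(i)}$ (an increasing sequence) and $z_n = x_n^{(k)}$. The existence of the suprema for the $k$-case together with the inductive hypothesis give existence of $\vee_n y_n$ (as the sum of $k-1$ existing suprema), so the $k=2$ case applied to $(y_n)$ and $(z_n)$ finishes the step.

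For $k=2$, denote $a_n = x_n^{(1)}$, $b_n = x_n^{(2)}$, and $s = \vee_n (a_n+b_n)$. The inequality $\vee_n(a_n+b_n)\le \vee_n a_n+\vee_n b_n$ is immediate since each $a_n+b_n$ is dominated by the right-hand side. For the converse, the crucial observation is that monotonicity of $(a_n)$ and $(b_n)$ forces
\[
a_m + b_n \;\le\; a_{\max(m,n)} + b_{\max(m,n)} \;\le\; s \qquad \text{for all } m,n\in\mathbb N.
\]
Fixing $m$ and taking the supremum over $n$, using translation invariance of suprema (i.e.\ $\vee_n(a_m+b_n)=a_m+\vee_n b_n$, which holds whenever $\vee_n b_n$ exists), yields $a_m + \vee_n b_n \le s$. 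Taking the supremum over $m$ in the same way gives $\vee_m a_m + \vee_n b_n \le s$, as required.

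The only subtlety is the translation identity $\vee_n(c+b_n)=c+\vee_n b_n$; I would recall that the map $x\mapsto x+c$ is an order isomorphism of the Riesz space onto itself, so it preserves arbitrary existing suprema. With that in hand there is no real obstacle; the proof is purely order-theoretic and does not use the norm structure of the Banach lattice at all.
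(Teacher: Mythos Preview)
Your proof is correct and follows essentially the same route as the paper's: both argue by induction, both exploit monotonicity to bound a mixed term $a_m+b_n$ by a diagonal term $a_{\max(m,n)}+b_{\max(m,n)}$, and both use the translation identity $\vee_n(c+b_n)=c+\vee_n b_n$ for the nontrivial inequality. One minor wrinkle in your inductive step: to apply the inductive hypothesis you need $\vee_n y_n$ to exist, which the lemma as stated \emph{assumes} rather than concludes, so the justification ``as the sum of $k-1$ existing suprema'' is formally circular---however, your $k=2$ argument actually proves the stronger statement that $\vee_n(a_n+b_n)$ exists (and equals $\vee_n a_n+\vee_n b_n$) whenever the two individual suprema do, and with that strengthened inductive hypothesis the induction is clean; the paper's proof glosses over exactly the same point.
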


\begin{proof}
We will proceed by induction on $k$. For any $m \in \NN$, we have
$$
\bigvee_{n\in \mathbb N} \Big(\sum_{i=1}^{k+1} x_n^{(i)}\Big)\geq \bigvee_{n\in \mathbb N} \Big(\sum_{i=1}^{k} x_n^{(i)}+x_m^{(k+1)}\Big)= \bigvee_{n\in \mathbb N} \Big(\sum_{i=1}^{k} x_n^{(i)}\Big)+x_m^{(k+1)},
$$

hence, using the induction hypothesis,
$$
\bigvee_{n\in \mathbb N} \Big(\sum_{i=1}^{k+1} x_n^{(i)}\Big)\geq \bigvee_{n\in \mathbb N} \Big(\sum_{i=1}^{k} x_n^{(i)}\Big) + \bigvee_{m\in\mathbb N} x_m^{(k+1)}=\sum_{i=1}^{k+1} \bigvee_{n\in \mathbb N} x_n^{(i)}.
$$

The converse inequality follows from the fact that, for every $m$,
$$
\bigvee_{n=1}^m (\sum_{i=1}^{k} x_n^{(i)} ) = \sum_{i=1}^{k} x_m^{(i)}  \leq \sum_{i=1}^k \bigvee_{n\in \mathbb N} x_n^{(i)}.
$$
\end{proof}

\begin{proof}[Proof of Theorem \ref{t:c_0}]
Throughout the proof, we denote by $(\delta_i)$ the canonical basis of
$\ell_\infty^n$ or $c_0$, and $f_i=T\delta_i$. Furthermore, we assume that $\|T\| \leq 1$.
Indeed, if $\|T\| > 1$, then $T^\prime = T/\|T\|$ is $\vr/\|T\|$-DP.
If (1) is established for a contractive operator $T$, then we can find
a DP map $S^\prime$ so that $0 \leq S^\prime \leq T^\prime$, and
$\|S^\prime - T^\prime\| \leq 256 \vr/\|T\|$ and take $S=\|T\|S'$. For (2) and (3) the same argument works.

Let us start by defining for each $n\in\mathbb N$ a function $\phi_n : \RR^n \to \RR$ given by
$$
\phi_n : (t_1, \ldots, t_n) \mapsto
\left\{ \begin{array}{ll}
   0                             &   t_1 \leq \vee_{i=2}^n |t_i|      \\
   2(t_1 - \vee_{i=2}^n |t_i|)
       &   \vee_{i=2}^n |t_i| \leq t_1 \leq 2 \vee_{i=2}^n |t_i|      \\
   t_1                           &   t_1 >2 \vee_{i=2}^n |t_i|
\end{array} \right. .
$$
\medskip

(1) For $1 \leq i \leq n$ set
$$
g_i = \phi_n(f_i, f_{i+1}, \ldots, f_n, f_1, \ldots, f_{i-1}).
$$
We claim that the operator $S : \ell_\infty^n \to F : \delta_i \mapsto g_i$
has the desired properties.

Note that $0 \leq \phi_n(t_1, \ldots, t_n) \leq t_1$, hence $0 \leq g_i \leq f_i$,
which shows that $0 \leq S \leq T$.

To show that $S$ is disjointness preserving, consider $i \neq j$. Note that, for any
$(t_1, \ldots, t_n) \in \RR^n$,
$$
\phi_n(t_i, t_{i+1}, \ldots, t_n, t_1, \ldots, t_{i-1}) \wedge
\phi_n(t_j, t_{j+1}, \ldots, t_n, t_1, \ldots, t_{j-1}) = 0 ,
$$
hence $g_i$ and $g_j$ are disjoint.

Finally we estimate
$$
\|T-S\| = \big\| (T-S) \sum_{i=1}^n \delta_i \big\| =
 \big\| \sum_{i=1}^n (f_i - g_i) \big\| .
$$
We claim that
\begin{equation}
\label{eq:sum}
\sum_{i=1}^n (f_i - g_i) \leq
 2^9 \expe_S \Big( \sum_{i \in S} f_i \Big) \wedge \Big( \sum_{i \in S^c} f_i \Big) .
\end{equation}
Indeed, by functional calculus,
we need to show that, for any $t_1, \ldots, t_n \in \RR^n$,
$$  \eqalign{
\sum_{i=1}^n \big(t_i - \phi_n(t_i, t_{i+1}, \ldots, t_n, t_1, \ldots, t_{i-1})\big)
\cr
&
\leq
2^9 \expe_S \Big( \sum_{i \in S} t_i \Big) \wedge \Big( \sum_{i \in S^c} t_i \Big) .
}  $$
By relabeling, we can assume that $t_1 \geq t_2 \geq \ldots \geq t_n$.
By Lemma \ref{l:max}, the right hand side is at least
$2(t_2 + \ldots + t_n)$. In the left hand side however,
$$
t_2 - \phi_n(t_2, t_3, \ldots, t_n, t_1) = t_2, \ldots,
t_n - \phi_n(t_n, t_1, \ldots, t_{n-1}) = t_n ,
$$
while
$$
0 \leq t_1 - \phi_n(t_1, t_2, \ldots, t_n) \leq \vee_{i \geq 2} t_i = t_2 .
$$
Therefore, the right hand side is at most
$2 t_2 + t_3 + \ldots + t_n \leq 2(t_2 + \ldots + t_n)$. Finally, since $T$ is $\vr$-DP, the result follows.

(2) For $ T : c_0 \to F$, let $f_i = T \delta_i$. For $n \geq i$, set
$$
g_i^{(n)} = \phi_n(f_i, f_1, \ldots, f_{i-1}, f_{i+1}, \ldots, f_n) .
$$
Clearly, $0 \leq g_i^{(n)} \leq f_i$. Moreover,
it is easy to observe that
$$
\phi_{n}(t_1, \ldots, t_n)=\phi_{n+1}(t_1, \ldots, t_n, 0) \geq \phi_{n+1}(t_1, \ldots, t_n, t_{n+1})
$$
for any $t_{n+1} \in \RR$. As the Krivine functional calculus preserves
lattice operations, we have
$$   \eqalign{
g_i^{(n)}
&
=
\phi_{n+1}(f_i, f_1, \ldots, f_{i-1}, f_{i+1}, \ldots, f_n, 0)
\cr
&
\geq
\phi_{n+1}(f_i, f_1, \ldots, f_{i-1}, f_{i+1}, \ldots, f_n, f_{n+1}) =g_i^{(n+1)} ,
}   $$
hence the sequence $(g_i^{(n)})_n$ is decreasing, for every $i$.
Due to the $\sigma$-Dedekind completeness of $F$,
$g_i = \wedge_n g_i^{(n)}$ exists in $F_+$.
Define the operator $S : c_0 \to F$ by $S \delta_i = g_i$.

Clearly $0 \leq S \leq T$. Moreover, $g_i^{(n)} \wedge g_j^{(n)}=0$ whenever
$i, j \in \{1, \ldots, n\}$ are distinct, hence $g_i \perp g_j$ for $i \neq j$,
and consequently, $S$ is disjointness preserving. Moreover,
$$
\|T-S\| = \sup_n \big\| (T-S) \sum_{i=1}^n \delta_i \big\| =
\sup_n \big\| \sum_{i=1}^n (f_i - g_i) \big\| .
$$
Reasoning as in (1), we conclude that, for every $k \geq n$,
$$
\big\| \sum_{i=1}^n (f_i - g_i^{(k)}) \big\| \leq
\big\| \sum_{i=1}^k (f_i - g_i^{(k)}) \big\| \leq 256 \vr .
$$
By the Fatou Property and Lemma \ref{l:sup},
$$
\big\| \sum_{i=1}^n (f_i - g_i) \big\| =
\big\| \vee_{k=1}^\infty \sum_{i=1}^n (f_i - g_i^{(k)}) \big\| \leq 256 \fatou \vr .
$$

(3) As before, let $(\delta_i)$
be the canonical basis of $c_0 \subset c$, and denote by $\one$ the constant
sequence $(1,1,\ldots) \in c$. Let $f_i = T \delta_i$, and
$$
f_0 = T \one - \vee_{n=1}^\infty \big(\sum_{i=1}^n f_i \big) .
$$
Note that $\sum_{i=1}^n f_i  = T(\sum_{i=1}^n \delta_i) \leq T \one$,
hence the supremum in the centered equation exists, due to the
$\sigma$-Dedekind completeness of $F$. Note also that, for
$x = (\alpha_1, \alpha_2, \ldots) \in c$,
$$
Tx = (\lim_j \alpha_j) T \one +
 \sum_{i=1}^\infty \big( \alpha_i - \lim_j \alpha_j \big) f_i .
$$
Further observe that, for any $S \subset \{0,1, \ldots, n\}$, we have
$$
\Big\| \big( \sum_{i \in S} f_i \big) \wedge
 \big( \sum_{i \in S^c} f_i \big) \Big\| \leq \vr
$$
(here $S^c = \{0, 1, \ldots, n \} \backslash S$).
Indeed, suppose without loss of generality that $0 \in S$.
Let $S^\prime = S \backslash \{0\}$,
$y = \sum_{i \in S^c} \delta_i$, and $x = \one - y$.
As $T$ is $\vr$-DP, $\|Tx \wedge Ty\| \leq \vr$.
But $Ty = \sum_{i \in S^c} f_i$, while
$$
Tx = \sum_{i \in S^\prime} f_i + T \one - \sum_{i=1}^n f_i \geq
\sum_{i \in S^\prime} f_i + T \one - \vee_{m=1}^{\infty} \sum_{i=1}^m f_i  =
\sum_{i \in S^\prime} f_i + f_0 = \sum_{i \in S} f_i .
$$
Define $g_i^{(n)} =\phi_{n+1}(f_i, f_0, \ldots, f_{i-1}, f_{i+1}, \ldots, f_n)$, for $0 \leq i \leq n$.
As in the proof of (2),
$$
\big\| \sum_{i=0}^n (f_i - g_i^{(n)}) \big\| \leq 256 \vr .
$$
Let $g_i = \lim_k g_i^{(k)}$, then
$$
\big\| \sum_{i=0}^n (f_i - g_i) \big\| \leq 256 \fatou \vr
$$
for every $n$.

Now observe that $g_i^{(i)} \geq g_i^{(i+1)} \geq \ldots$, and
set $\tilde{g} = \vee_{n=1}^\infty \sum_{i=1}^n g_i$.
Define $S : c \to F$ by setting $S \delta_i = g_i$, and
$S \one = \tilde{g} + g_0$. This operator is well-defined and positive.
Moreover, $(T - S) \delta_i = f_i - g_i$ for $i \in \NN$, and,
by Lemma \ref{l:sup},
$$
(T-S) \one = \vee_{n=0}^\infty \sum_{i=0}^n (f_i - g_i) .
$$
Thus, $T \geq S$. Indeed, suppose $x = (\alpha_i)_{i=1}^\infty \in c$ is positive.
Let $\alpha = \lim_j \alpha_j$, then
$$
(T - S) x = \alpha \big( \vee_{n=0}^\infty \sum_{i=0}^n(f_i - g_i) \big) +
\sum_{i=1}^\infty (\alpha_i - \alpha) (f_i - g_i) \geq \alpha( f_0 - g_0 )\geq 0 .
$$
Consequently,
$$
\|T-S\| = \|(T-S) \one\| = \| \vee_{n=0}^\infty \sum_{i=0}^n (f_i - g_i) \|
 \leq \fatou \sup_n \big\| \sum_{i=0}^n (f_i - g_i) \big\| \leq
256 \fatou^2 \vr .
$$ \end{proof}

\section{Operators into $C(K)$ spaces}\label{s:C(K)}

In this section we consider operators from sequences spaces into $C(K)$.
Throughout the section, $K$ denotes a compact Hausdorff space.
First, consider the case when $C(K)$ is $\sigma$-Dedekind complete
(equivalently, $K$ is a basically disconnected compact Hausdorff set,
see \cite[Proposition 1.a.4]{LT2}).

\begin{theorem}\label{t:seq to C(K)}
Suppose $X$ is a Banach lattice with the order structure given by its
$1$-unconditional basis, and $C(K)$ is $\sigma$-Dedekind complete.
If $T : X \to C(K)$ is $\vr$-DP, then there exists a
disjointness preserving $S : X \to C(K)$ so that
$\|S\| \leq \|T\|$, and $\|S - T\| \leq 257 \vr \|T\|$.
If $T$ is positive, then $S$ can be chosen so that, in addition,
$0 \leq S \leq T$. 
\end{theorem}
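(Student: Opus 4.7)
The plan is to adapt the pointwise cutoff construction from the proof of Theorem~\ref{t:c_0}(1) to an arbitrary $1$-unconditional basis: the role of the $\ell_\infty^n$ basis there is played by the basis of $X$, and the $\sigma$-Dedekind completeness of $C(K)$ replaces the Fatou property used for the $c_0$ case in Theorem~\ref{t:c_0}(2). After rescaling to $\|T\|=1$ with the basis $(e_i)_{i\in I}$ normalized, I set $f_i := Te_i \in C(K)$. For each $i$ and each $n\ge i$, the sign-adapted cutoff
\[
g_i^{(n)}(k)\;:=\;\mathrm{sign}\bigl(f_i(k)\bigr)\,\phi_n\bigl(|f_i(k)|,\;|f_j(k)|\colon j\ne i,\,j\le n\bigr)
\]
defines a continuous function with $|g_i^{(n)}|\le|f_i|$, and at every $k$ at most one of $\{g_i^{(n)}(k):i\le n\}$ is nonzero. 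Adding more ``competing'' arguments to $\phi_n$ can only shrink its output, so $(|g_i^{(n)}|)_{n\ge i}$ is pointwise decreasing; by $\sigma$-Dedekind completeness of $C(K)$ the order limit $g_i := \mathrm{sign}(f_i)\,\bigwedge_{n\ge i}|g_i^{(n)}|\in C(K)$ exists, $|g_i|\le|f_i|$, and the family $(g_i)$ is pairwise disjoint; in the positive case $0\le g_i\le f_i$.

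I then define $S$ by $Se_i:=g_i$ and extend linearly. For $x=\sum_i\alpha_i e_i$ of finite support, $(Sx)(k)=\sum_i\alpha_i g_i(k)$ has at most one nonzero term, whose modulus is dominated by $|\alpha_{i(k)}|\cdot\|g_{i(k)}\|_\infty\le\|x\|_X\,\|T\|$, using $1$-unconditionality (a band-projection estimate on a normalized basis) to get $|\alpha_i|\le\|x\|_X$ together with $\|g_i\|_\infty\le\|f_i\|_\infty\le\|T\|$. Hence $\|S\|\le\|T\|$, $S$ extends uniquely to $X$, pairwise disjointness of $(g_i)$ makes $S$ disjointness preserving, and $0\le S\le T$ in the positive case.

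For the estimate $\|T-S\|\le 257\vr\|T\|$, I work pointwise. The scalar analysis in the proof of Theorem~\ref{t:c_0}(1), combined with Lemma~\ref{l:max}, yields for each $k$ a bound of the form
\[
\sum_i|\alpha_i|\,|f_i(k)-g_i(k)|\;\le\;C\cdot\expe_{A\subset I}\min\bigl\{\textstyle\sum_{i\in A}|\alpha_i|\,|f_i(k)|,\;\sum_{i\in A^c}|\alpha_i|\,|f_i(k)|\bigr\}
\]
for a universal constant $C$. To convert the inner minimum into the $\vr$-DP inequality, I choose signs $\eta_i\in\{\pm1\}$ so that $\eta_i\alpha_i f_i(k)=|\alpha_i f_i(k)|$ at the fixed point $k$; then $\sum_{i\in A}|\alpha_i|\,|f_i(k)|=\bigl(T\bigl(\sum_{i\in A}\eta_i\alpha_i e_i\bigr)\bigr)(k)$, the disjoint pair $\sum_{i\in A}\eta_i\alpha_i e_i$, $\sum_{i\in A^c}\eta_i\alpha_i e_i$ has each element of $X$-norm at most $\|x\|\le 1$ by $1$-unconditionality, and the $\vr$-DP hypothesis (applied for every such pair, then specialized at $k$ to the sign-choice that was valid there) bounds the minimum by $\vr$. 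Taking the supremum over $k$ and passing $n\to\infty$ via the order limit gives the claim. The main obstacle is exactly this bridging step: the cutoff $\phi_n$ produces an estimate in terms of the pointwise scalars $|f_i(k)|$, while $\vr$-DP controls only norms of $|Tu|\wedge|Tv|$ for genuine disjoint elements $u,v\in X$, so at each $k$ one must ``un-absorb'' the pointwise absolute values into a specific disjoint pair via pointwise sign choices $\eta_i(k)$ that $1$-unconditionality renders free of cost.
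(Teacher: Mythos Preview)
Your construction of $S$ is essentially the paper's: the order limit $g_i$ you form coincides with
\[
g_i(t)=\begin{cases}0,&|f_i(t)|\le h_i(t),\\ 2\bigl(f_i(t)-\sign f_i(t)\,h_i(t)\bigr),&h_i(t)\le|f_i(t)|\le 2h_i(t),\\ f_i(t),&|f_i(t)|\ge 2h_i(t),\end{cases}
\qquad h_i:=\bigvee_{j\ne i}|f_j|,
\]
where $h_i\in C(K)$ exists directly from $|f_j|\le\one$ and $\sigma$-Dedekind completeness, so your limiting procedure is unnecessary but harmless. The disjointness of the $g_i$, the bound $\|S\|\le\|T\|$, and the positive case are all fine.

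The gap is in the error estimate. You assert a pointwise inequality
\[
\sum_i|\alpha_i|\,|f_i(k)-g_i(k)|\;\le\;C\cdot\expe_{A}\min\Bigl\{\sum_{i\in A}|\alpha_i|\,|f_i(k)|,\ \sum_{i\in A^c}|\alpha_i|\,|f_i(k)|\Bigr\}
\]
for a universal $C$, by invoking ``the scalar analysis in the proof of Theorem~\ref{t:c_0}(1)''. That analysis treats only the \emph{unweighted} case $\alpha_i\equiv1$, and the displayed inequality is false for general weights: the cutoff $g_i$ compares the unweighted values $|f_j(k)|$, so $|f_i(k)-g_i(k)|$ does not scale with $\alpha_i$. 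Concretely, take $n=2$, $|f_1(k)|=1$, $|f_2(k)|=\delta\in(0,\tfrac12)$, $\alpha_2=1$, $\alpha_1\ll\delta$; then $g_1=f_1$, $g_2=0$, the left side is $\delta$ while the right side is $\tfrac12\alpha_1$, and the ratio is unbounded. In Theorem~\ref{t:c_0} this problem never arises because for $\ell_\infty^n$ and $c_0$ the norm $\|T-S\|$ is computed at $\one$, i.e.\ with $\alpha_i\equiv1$.

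The correct argument (and the reason for the constant $257$ rather than $256$) is: fix $k\in K$ and let $k^\ast$ realize the \emph{weighted} maximum $\alpha_i|f_i(k)|$. Lemma~\ref{l:max} applied to $b_i=\alpha_i|f_i(k)|$, together with your sign trick and the $\vr$-DP hypothesis, gives $\sum_{i\ne k^\ast}\alpha_i|f_i(k)|\le 256\vr$, hence $\sum_{i\ne k^\ast}\alpha_i|f_i(k)-g_i(k)|\le 256\vr$. The remaining term needs a \emph{separate} use of $\vr$-DP on basis pairs: for each $j\ne k^\ast$, $\min\{|f_{k^\ast}(k)|,|f_j(k)|\}\le\||T\delta_{k^\ast}|\wedge|T\delta_j|\|\le\vr$, so either $|f_{k^\ast}(k)|\le\vr$ or $h_{k^\ast}(k)\le\vr$; in both cases $|f_{k^\ast}(k)-g_{k^\ast}(k)|\le\vr$, and $\alpha_{k^\ast}\le 1$ then yields the total $257\vr$.
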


\begin{proof}
By scaling, we can assume that $T$ is a contraction.
Denote the canonical normalized basis of $X$ by $(\delta_i)_{i=1}^\infty$,
and let $c_{00}$ be the linear span of $\delta_1, \delta_2, \ldots$ in $X$.
For $i \in \NN$, set $f_i = T \delta_i$, and note that $|f_i| \leq \one$.
Consequently, the sequence $(f_i)$ is order bounded, hence, by the
$\sigma$-Dedekind completeness of $C(K)$, $h_i = \vee_{j \neq i} |f_j|$
is continuous for every $i$. Let us define the continuous functions
$$
g_i(t) = \left\{ \begin{array}{ll}
  0            &   |f_i(t)| \leq h_i(t)    \\
  f_i(t)       &   |f_i(t)| \geq 2 h_i(t)    \\
  2 \big(f_i(t) - \sign f_i(t) \cdot h_i(t) \big)
     &    h_i(t) \leq |f_i(t)| \leq 2 h_i(t).
\end{array} \right.
$$

Now, let $S : c_{00} \to C(K) : \delta_i \mapsto g_i$. Clearly, $S$ is disjointness preserving since $|g_i|\wedge |g_j|=0$ for $i\neq j$.
It remains to show that $T|_{c_{00}} - S$ is
bounded, and its norm does not exceed $257 \vr$ (once this is done, we
exend $S$ to the whole space $X$ by continuity).

To this end, fix $t \in K$, and pick $\alpha_1, \ldots, \alpha_N \in \FF$
with $\|\sum_{i=1}^N \alpha_i \delta_i\|_X \leq 1$. We have to show that for every $t\in K$
\begin{equation}
\label{eq:257vr}
\sum_{i=1}^N |\alpha_i| |f_i(t) - g_i(t)| \leq 257 \vr.
\end{equation}
It suffices to consider $\alpha_1, \ldots, \alpha_N \geq 0$.

For $S \subset \{1, \ldots, N\}$, set
$S^c = \{1, \ldots, N\} \backslash S$.
Consider $x = \sum_{i \in S} \omega_i \alpha_i \delta_i$ and
$y = \sum_{i \in S^c} \omega_i \alpha_i \delta_i$, where
$\omega_i = |f_i(t)|/f_i(t)$ if $f_i(t)\neq 0$, and $\omega_i = 0$ otherwise.
Note that $x$ and $y$ are disjoint elements of $\ball(X)$.
As $T$ is $\vr$-DP, we have
$$
\Big( \sum_{i \in S} \alpha_i |f_i(t)| \Big) \wedge
\Big( \sum_{i \in S^c} \alpha_i |f_i(t)| \Big) \leq
\big\| |Tx| \wedge |Ty| \big\|
\leq \vr ,
$$
hence, by Lemma \ref{l:max},
$$
\sum_{i=1}^N \alpha_i |f_i(t)| - \vee_{i=1}^N \alpha_i |f_i(t)| \leq 256 \vr .
$$
Pick $k$ so that $\vee_{i=1}^N \alpha_i |f_i(t)| = \alpha_k |f_k(t)|$.
Note that $|f_k(t) - g_k(t)| \leq \vr$. Indeed, this inequality is evident if
$|f_k(t)| \leq \vr$. If $|f_k(t)| > \vr$, note that $|f_j(t)| \leq \vr$
for any $j \neq k$, otherwise we would have
$\| |T \delta_k| \wedge |T \delta_j| \| > \vr$, contradicting the
assumption that $T$ is $\vr$-DP. Thus, if $|f_k(t)| > \vr$, then $h_k(t) \leq \vr$, and we also have
$|f_k(t) - g_k(t)| \leq h_k(t)$.

As $\alpha_k \leq 1$, we have
$$
\sum_{i=1}^N \alpha_i |f_i(t) - g_i(t)| \leq
\sum_{i \neq k} \alpha_i |f_i(t)| + |f_k(t) - g_k(t)|
\leq 256 \vr + \vr ,
$$
establishing \eqref{eq:257vr}.

If $T$ is positive, then we have $0 \leq g_i \leq f_i$,
hence $0 \leq S \leq T$.
\end{proof}

Along the same lines, we prove:

\begin{theorem}\label{t:f.d. to C(K)}
Suppose $X$ is a finite dimensional Banach lattice.
If $T : X \to C(K)$ is $\vr$-DP, then there exists a
disjointness preserving $S : X \to C(K)$ so that
$\|S\| \leq \|T\|$, and $\|S - T\| \leq 256 \vr \|T\|$.
If $T$ is positive, then $S$ can be chosen so that, in addition,
$0 \leq S \leq T$. 
\end{theorem}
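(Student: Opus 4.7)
The plan is to mimic the argument of Theorem \ref{t:seq to C(K)}, exploiting the fact that finite-dimensionality eliminates the need for Dedekind completeness of the target. Since every finite-dimensional Archimedean Banach lattice is order isomorphic to $\RR^n$ with coordinatewise order, $X$ admits a disjoint basis of atoms $\delta_1, \ldots, \delta_n$, which I normalize so that $\|\delta_i\|_X = 1$; this basis is automatically $1$-unconditional. After rescaling I may assume $\|T\| \leq 1$, and set $f_i := T\delta_i \in C(K)$ so that $|f_i| \leq \one$. Because $n$ is finite, $h_i := \bigvee_{j \neq i}|f_j|$ is automatically continuous, and I then copy the piecewise cutoff from the proof of Theorem \ref{t:seq to C(K)} to define $g_i$ and set $S\delta_i := g_i$, extended linearly to $X$. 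By construction $|g_i| \wedge |g_j| = 0$ for $i \neq j$, so $S$ is disjointness preserving; in the positive case $0 \leq g_i \leq f_i$ gives $0 \leq S \leq T$, and $\|S\| \leq \|T\|$ will follow from the norm estimate.

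The heart of the argument is a pointwise bound. Fix $t \in K$ and scalars $\alpha_i$ with $\|\sum_i \alpha_i\delta_i\|_X \leq 1$; the disjointness of the atoms and $1$-unconditionality give $|\alpha_k| \leq 1$ for every $k$. For each $A \subset \{1, \ldots, n\}$, aligning phases $\omega_i$ with $f_i(t)$ and again using $1$-unconditionality, the vectors $x_A := \sum_{i \in A}\omega_i\alpha_i\delta_i$ and $x_{A^c}$ are disjoint elements of $\ball(X)$ satisfying $|Tx_A(t)| = \sum_{i \in A}|\alpha_i||f_i(t)|$. The $\vr$-DP hypothesis then yields $\bigl(\sum_{i \in A}|\alpha_i||f_i(t)|\bigr) \wedge \bigl(\sum_{i \in A^c}|\alpha_i||f_i(t)|\bigr) \leq \vr$, and averaging over $A$ with Lemma \ref{l:max} applied to $b_i = |\alpha_i||f_i(t)|$ yields
\[
\sum_{i=1}^n |\alpha_i||f_i(t)| \;-\; \max_k |\alpha_k||f_k(t)| \;\leq\; 256\,\vr.
\]

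To convert this into the required bound on $\sum_i|\alpha_i||f_i(t) - g_i(t)|$, I pick $k$ realizing the maximum. For $i \neq k$, the inequality $|f_i(t) - g_i(t)| \leq |f_i(t)|$ lets the sum $\sum_{i \neq k}|\alpha_i||f_i(t) - g_i(t)|$ be controlled by $256\vr$. For $i = k$, applying $\vr$-DP to the disjoint unit atoms $\delta_k$ and $\delta_j$ gives $\||f_k| \wedge |f_j|\|_{C(K)} \leq \vr$ for every $j \neq k$, so at $t$ either $|f_k(t)| \leq \vr$ or $h_k(t) \leq \vr$; in both cases $|f_k(t) - g_k(t)| \leq \vr$. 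Combined with $|\alpha_k| \leq 1$, taking the supremum over $t$ produces a bound on $\|T - S\|$. The main technical obstacle is achieving the sharp constant $256$ rather than $257$ (the latter being what a naive mimicry of Theorem \ref{t:seq to C(K)} produces); I expect this to be resolved either by a slightly more aggressive truncation in the definition of $g_i$ specific to the finite-dimensional setting, or by coupling the remaining contribution $|\alpha_k||g_k(t)|$ with the maximum subtracted off in Lemma \ref{l:max}, exploiting the identity $|f_k(t) - g_k(t)| = |f_k(t)| - |g_k(t)|$.
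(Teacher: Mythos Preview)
Your approach is exactly the one the paper takes: the paper's own proof is merely a sketch that says to take a basis of atoms, normalize, define $f_i$, $h_i$, $g_i$ as in Theorem~\ref{t:seq to C(K)}, and ``proceed further in the same manner.'' Your write-up is in fact more detailed than what the paper supplies.

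Regarding the constant: you are right to be puzzled. The argument of Theorem~\ref{t:seq to C(K)} genuinely gives $257\vr$, not $256\vr$, and nothing about finite-dimensionality changes the arithmetic $256\vr + \vr$. The paper does not indicate any refinement that would shave off the extra $\vr$, and the ideas you suggest for doing so do not obviously work (the term $|\alpha_k||f_k(t)-g_k(t)|$ cannot in general be absorbed into $\max_i |\alpha_i||f_i(t)|-|\alpha_k||g_k(t)|$, since the latter can be zero while the former is positive). The discrepancy is almost certainly a typo in the statement; the argument the paper points to yields $257\vr\|T\|$.
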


\begin{proof}[Sketch of a proof]
It is well known (see e.g. \cite[Corollary 4.20]{Schw})
that $X$ has a basis of atoms, which we denote by $(\delta_i)_{i=1}^N$
($N = \dim X$). Use scaling to assume that $T$ is contractive.
Let $f_i = T \delta_i$ and $h_i = \vee_{j \neq i} |f_j|$.
Define $g_i$ and $S$ as in the proof of Theorem \ref{t:seq to C(K)},
and proceed further in the same manner.
\end{proof}

For operators from $c$ or $c_0$ into $C(K)$, the assumption that
the range is $\sigma$-Dedekind complete is redundant.

\begin{theorem}\label{t:c0->C(K)}
Suppose $K$ is a compact Hausdorff space, and $\vr$ is a positive number.
Then, for any operator $T : c_0 \to C(K)$, $\vr$-DP , there exists
a DP operator $S : c_0 \to C(K)$ so that $\|S\| \leq \|T\|$,
and $\|T-S\| \leq 257 \vr$.
If $T$ is positive, then $S$ can be selected so that $0 \leq S \leq T$.
\end{theorem}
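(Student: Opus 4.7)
The plan is to mimic the proof of Theorem \ref{t:seq to C(K)}, with one crucial modification that removes the need for $\sigma$-Dedekind completeness of $C(K)$. The pointwise supremum $h_i = \vee_{j\ne i}|f_j|$ used there need not be continuous on a general $K$, but the only property of $h_i$ actually exploited in that argument is the inequality $h_i(t) \le \vr$ whenever $|f_i(t)| > \vr$, which is a direct consequence of the $\vr$-DP condition applied to $\delta_i, \delta_j$. This suggests replacing $h_i$ by the constant function $\vr$, which lives in $C(K)$ without any completeness assumption.

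Concretely, setting $f_i = T\delta_i$, I will fix a continuous truncation $\psi:[0,\infty)\to[0,1]$ that vanishes on $[0,\vr]$, equals $1$ on $[2\vr,\infty)$, and is affine on $[\vr,2\vr]$, and define $g_i = \psi(|f_i|)\,f_i \in C(K)$. Since $\min(|f_i(t)|, |f_j(t)|) \le \vr$ for any $i \ne j$ and $t \in K$ (by $\vr$-DP), at most one of $\psi(|f_i(t)|)$, $\psi(|f_j(t)|)$ can be nonzero, so at each $t$ at most one $g_i(t)$ is nonzero, and in particular $g_i \perp g_j$ in $C(K)$. Setting $S\delta_i = g_i$ and extending linearly, the ``at most one nonzero coordinate at each point'' property gives the pointwise bound $|Sx(t)| \le \|x\|_{c_0}\|T\|$ on $c_{00}$, so $S$ extends by density to a bounded operator on $c_0$ with $\|S\|\le\|T\|$; applying the same property to disjoint $x,y \in c_0$ shows that the extension is disjointness preserving. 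If $T$ is positive, then $f_i\ge 0$, hence $0\le g_i \le f_i$ and $0\le S\le T$.

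To estimate $\|T-S\|$, I will follow the pointwise scheme of Theorem \ref{t:seq to C(K)} verbatim. For $x = \sum_{i=1}^N \alpha_i\delta_i$ with $\|x\|_{c_0}\le 1$ and $t \in K$, choose unimodular $\omega_i$ with $\omega_i f_i(t) = |f_i(t)|$, and apply $\vr$-DP to the disjoint pair $\sum_{i\in A}\omega_i|\alpha_i|\delta_i$ and $\sum_{i\notin A}\omega_i|\alpha_i|\delta_i$ in $\ball(c_0)$; evaluating at $t$ gives $\min\bigl(\sum_{i\in A}|\alpha_i||f_i(t)|,\sum_{i\notin A}|\alpha_i||f_i(t)|\bigr) \le \vr$ for every $A \subset \{1,\ldots,N\}$. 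Lemma \ref{l:max} then yields $\sum_i|\alpha_i||f_i(t)| - \max_i|\alpha_i||f_i(t)| \le 256\vr$. Using $|f_i-g_i|\le|f_i|$ in general and $|f_m-g_m|\le\vr$ at the maximizing index $m$ (both verified by a case analysis on the three regimes of $\psi$), this produces
$$|(T-S)x(t)| \le \sum_i |\alpha_i|\,|f_i(t)-g_i(t)| \le |\alpha_m|\vr + \sum_{i\ne m}|\alpha_i||f_i(t)| \le 257\vr,$$
and taking the supremum over $x$ and $t$ yields $\|T-S\|\le 257\vr$.

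The only substantive obstacle is the unavailability of the function $h_i$ in the absence of $\sigma$-Dedekind completeness; once one recognizes that the $\vr$-DP hypothesis already supplies a uniform pointwise threshold (namely $\vr$) above which at most one $|f_i(t)|$ can lie, the replacement of $h_i$ by the constant $\vr$ is cost-free, and the rest of the argument proceeds essentially unchanged from the proof of Theorem \ref{t:seq to C(K)}.
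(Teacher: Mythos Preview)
Your proposal is correct and follows essentially the same route as the paper. The paper defines the truncation $\phi(s)=0$ for $|s|\le\vr$, $\phi(s)=2(|s|-\vr)\,\sign s$ for $\vr\le|s|\le 2\vr$, $\phi(s)=s$ for $|s|\ge 2\vr$, and sets $g_i=\phi\circ f_i$; your $g_i=\psi(|f_i|)f_i$ differs only in the intermediate regime but satisfies the same three properties used (namely $|g_i|\le|f_i|$, $g_i(t)=0$ when $|f_i(t)|\le\vr$, and $|f_i-g_i|\le\vr$ pointwise), and the disjointness, norm, and $257\vr$ estimates are obtained in the same way via Lemma~\ref{l:max}.
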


Here and below, we use the notation $(\delta_i)_{i \in \NN}$ for the canonical
basis of $c_0$, while $c_{00}$ denotes the set of all
finitely supported sequences in $c_0$.
The following straightforward observation will be used throughout the proof.

\begin{lemma}\label{l:c0->C(K)}
A linear map $U : c_{00} \to C(K)$ is bounded if and only if
$$
\sup_{t \in K} \sum_{i=1}^\infty \big| [U\delta_i](t) \big|
$$
is finite. If this is the case, then the above expression equals $\|U\|$.
Moreover, $U$ extends by continuity to an operator from $c_0$ into $C(K)$,
of the same norm.
\end{lemma}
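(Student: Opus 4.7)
The plan is to prove both inequalities bounding $\|U\|$ by optimizing signs pointwise on $K$, then invoke density of $c_{00}$ in $c_0$ for the extension. First I would verify the upper bound: for any $x = \sum_i \alpha_i \delta_i \in c_{00}$ with $\|x\|_\infty \leq 1$ and any $t \in K$, the triangle inequality gives
\[
  \big| [Ux](t) \big| \;\leq\; \sum_i |\alpha_i| \cdot \big|[U\delta_i](t)\big| \;\leq\; \sum_i \big|[U\delta_i](t)\big|,
\]
so $\|U\| \leq \sup_{t \in K} \sum_i |[U\delta_i](t)|$, and in particular finiteness of the right-hand side implies boundedness of $U$.

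For the matching lower bound I would fix $N \in \NN$ and $t \in K$, and pick scalars $\omega_i$ with $|\omega_i| \leq 1$ such that $\omega_i [U\delta_i](t) = |[U\delta_i](t)|$ for $1 \leq i \leq N$ (taking $\omega_i = \overline{[U\delta_i](t)}/|[U\delta_i](t)|$ when the denominator is nonzero, and $\omega_i = 0$ otherwise). Then $x = \sum_{i=1}^N \omega_i \delta_i$ lies in $\ball(c_{00})$, and evaluation at $t$ gives
\[
\sum_{i=1}^N \big|[U\delta_i](t)\big| \;=\; [Ux](t) \;\leq\; \|Ux\|_\infty \;\leq\; \|U\|.
\]
Taking the supremum over $N$ and then over $t$ yields $\sup_{t\in K}\sum_i |[U\delta_i](t)| \leq \|U\|$, so both sides are simultaneously finite and equal.

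For the extension assertion, I would observe that $c_{00}$ is dense in $c_0$ (since any null sequence is the sup-norm limit of its truncations) and that $C(K)$ is a Banach space, so the standard bounded linear transformation argument produces a unique continuous extension $\widetilde{U} : c_0 \to C(K)$ whose operator norm equals $\|U\|$.

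The only subtlety, which I would flag explicitly in the write-up, is the sign-selection step: one must handle the case where some $[U\delta_i](t)$ vanish, which is handled trivially by setting the corresponding $\omega_i = 0$. Otherwise the argument is essentially a pointwise restatement of the familiar duality $(\ell_1)^* = \ell_\infty$ at each fixed $t \in K$.
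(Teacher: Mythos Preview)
Your argument is correct and is exactly the standard one; the paper in fact omits the proof entirely, calling the lemma a ``straightforward observation,'' so your write-up supplies precisely the expected details.
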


\begin{proof}[Proof of Theorem \ref{t:c0->C(K)}]
We know that, if $T$ is $\vr$-DP, then $T/\|T\|$ is $\vr/\|T\|$-DP.
We can therefore assume that $T$ is a contraction, and
restrict our attention to $\vr < 2^{-8}$.
Denote the canonical basis of $c_0$ by $(\delta_i)_{i=1}^\infty$,
and set $f_i = T \delta_i$.
Note that $T$ is $\vr$-DP if, and only if, the inequality
\begin{equation}
\label{eq:c0 disj}
\Big( \sum_{i \in A} \big| f_i(t) \big| \Big) \wedge
\Big( \sum_{i \in B} \big| f_i(t) \big| \Big) \leq \vr
\end{equation}
holds for any $t \in K$, and for any two disjoint sets $A$ and $B$.
Consequently, for any $t \in K$ there exists at most one $i \in \NN$
so that $|f_i(t)| > \vr$.

Consider the function
$$
\phi(t) = \left\{ \begin{array}{ll}
  0                    &   |t| \leq \vr,   \\
  2(|t|-\vr)\sign t    &   \vr \leq |t| \leq 2\vr,   \\
  t                    &   |t| \geq 2\vr.
\end{array} \right.
$$
Let $g_i = \phi \circ f_i$ (that is, $g_i(t) = \phi(f_i(t))$),
and define the operator $S : c_{00} \to C(K) : \delta_i \mapsto g_i$.
As noted above for any $t \in K$ there exists at most one $i \in \NN$
so that $|g_i(t)| \neq 0$, hence the vectors $(g_i)$ are disjoint,
which shows that $S$ is disjointness preserving. Moreover, if $T$
is positive, then for any $i$, $0 \leq S \delta_i = g_i \leq f_i = T \delta_i$,

First show that $S$ is, indeed, a well-defined contraction
(hence it extends by continuity to a contraction $c_0 \to C(K)$).
By Lemma \ref{l:c0->C(K)},  $\sum_{i=1}^\infty |f_i(t)| \leq 1$
for every $t \in K$. By our construction, $|g_i| \leq |f_i|$, hence
$\sum_{i=1}^\infty |g_i(t)| \leq 1$ for every $t$. By Lemma \ref{l:c0->C(K)}
again, $\|S\| \leq 1$.

It remains to estimate
$$
\|T - S\| = \sup_{t \in K} \sum_{i=1}^\infty \big| [ (T-S) \delta_i ](t) \big| =
\sup_{t \in K} \sum_{i=1}^\infty \big| f_i(t) - g_i(t) \big| .
$$
Fix $t \in K$ and $N \in \NN$, and show that
\begin{equation}
\label{eq:c0->C(K)}
\sum_{i=1}^N \big| f_i(t) - g_i(t) \big| \leq 257 \vr .
\end{equation}

To this end, find $k \in \{1, \ldots, N\}$ so that
$|f_k(t)| = \max_{1 \leq i \leq N} |f_i(t)|$. Then
$|f_j(t)| \leq \vr$ (and consequently, $g_j(t) = 0$) for $j \neq k$.
For a set $S \subset \{1, \ldots, N\}$, set $S^c = \{1, \ldots, N\} \backslash S$.
We know that, for any such $S$,
$$
\sum_{i \in S} \big|f_i(t)\big| \wedge \sum_{i \in S^c} \big|f_i(t)\big| \leq \vr .
$$
Indeed, consider $x = \sum_{i \in S} \overline{\sign f_i(t)} \delta_i$, and
$y = \sum_{i \in S^c} \overline{\sign f_i(t)} \delta_i$. The elements $x$ and $y$
belong to the unit ball of $c_0$, and are disjoint. Thus,
$$
\sum_{i \in S} \big|f_i(t)\big| \wedge \sum_{i \in S^c} \big|f_i(t)\big|
\leq \big\| |Tx| \wedge |Ty| \| \leq \vr .
$$
Then
$$
\sum_{i=1}^N \big| f_i(t) - g_i(t) \big| =
 \sum_{j \neq k} |f_j(t)| + |f_k(t) - g_k(t)| .
$$
By Lemma \ref{l:max}, $\sum_{j \neq k} |f_j(t)| \leq 256 \vr$.
Moreover, $\sup_s |\phi(s) - s| = \vr$, hence
$|f_k(t) - g_k(t)| \leq \vr$. This yields \eqref{eq:c0->C(K)}.
\end{proof}

\begin{theorem}\label{t:c->C(K)}
Suppose $K$ is a compact Hausdorff space, and $\vr$ is a positive number.
For any $\vr$-DP operator $T : c \to C(K)$, there exists
a DP operator $S : c \to C(K)$ so that
$\|T-S\| \leq 536 \vr$. If $T$ is positive, then $S$ can be chosen to be
positive as well.
\end{theorem}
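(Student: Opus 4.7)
The plan is to construct $S$ directly as a weighted composition on $c \cong C(\NN^*)$, where $\NN^* = \NN \cup \{\infty\}$ is the one-point compactification of $\NN$; such operators are automatically disjointness preserving, and the task reduces to verifying that $Sx$ lies in $C(K)$ for every $x \in c$. First I would reduce to $\|T\| \leq 1$, set $f_i := T\delta_i$ and $f_\infty := T\one$, and extract the pointwise bounds
$$
|f_i(t)| \wedge |f_j(t)| \leq \vr \ (i \neq j), \qquad |f_i(t)| \wedge |f_\infty(t) - f_i(t)| \leq \vr,
$$
together with $\sum_i |f_i(t)| - \max_i |f_i(t)| \leq 256\vr$ via Lemma \ref{l:max}, by applying the $\vr$-DP hypothesis to the disjoint pairs $(\delta_i, \delta_j)$, $(\delta_i, \one - \delta_i)$, and sign-weighted finite sums, as in the proof of Theorem \ref{t:c0->C(K)}. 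In particular, the open sets $U_i := \{|f_i| > \vr\}$ are pairwise disjoint, and $|f_\infty - f_i| \leq \vr$ holds on $U_i$.

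Next, choose a continuous cutoff $\tilde\phi : \FF \to \FF$ with $\tilde\phi(s) = 0$ for $|s| \leq 2\vr$, $\tilde\phi(s) = s$ for $|s| \geq 4\vr$, and linear in between (so $|\tilde\phi(s) - s| \leq 2\vr$); define $\sigma : K \to \NN^*$ by $\sigma(t) = i$ on $U_i$ and $\sigma(t) = \infty$ elsewhere, and put $Sx(t) := \tilde\phi(f_\infty(t)) \cdot x(\sigma(t))$, where $x \in c$ is viewed as a continuous function on $\NN^*$ with $x(\infty) = \lim_i x_i$. The decisive continuity check is at $t_0 \in \partial U_i \setminus U_i$: the condition $|f_i(t_0)| = \vr$ combined with the second estimate above forces $|f_\infty(t_0)| \leq 2\vr$, so $\tilde\phi(f_\infty(t_0)) = 0$, allowing $\tilde\phi(f_\infty) \cdot x(\sigma)$ to glue continuously. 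For sequences $t_n \to t_0$ with $t_n \in U_{j_n}$, note that only finitely many $j$ can satisfy $|f_j(t_0)| \geq \vr$ (since $\sum_i |f_i(t_0)| \leq 1$), so either $j_n$ eventually lies in this finite set (whereupon $\tilde\phi(f_\infty(t_n)) \to 0$) or $j_n \to \infty$ (whereupon $x_{j_n} \to x(\infty)$); either way $Sx(t_n) \to Sx(t_0)$. Disjointness preservation of $S$ is then automatic from the weighted composition form: for $x \perp y$ in $c$ we have $x_i y_i = 0$ for all $i$ and at most one of $\alpha(x), \alpha(y)$ is nonzero, so $x(\sigma(t)) y(\sigma(t)) = 0$ identically and hence $Sx \cdot Sy \equiv 0$.

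For the norm estimate, decompose $x \in c$ with $\|x\| \leq 1$ as $x = \alpha\one + z$ with $z \in c_0$, $|\alpha| \leq 1$, $\|z\|_{c_0} \leq 2$, and set $h := \tilde\phi(f_\infty)$. At $t \in U_k$ a short rearrangement gives
$$
(T - S)x(t) = x_k(f_\infty - h)(t) + z_k(f_k - f_\infty)(t) + \sum_{i \neq k} z_i f_i(t),
$$
which I would bound by $2\vr + 2\vr + 2 \cdot 256\vr = 516\vr$ using the three estimates above; at $t \notin \bigcup_i U_i$ one gets $(T-S)x(t) = \alpha(f_\infty - h)(t) + \sum_i z_i f_i(t)$, likewise bounded by $2\vr + 2 \cdot 257\vr = 516\vr$. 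Hence $\|T - S\| \leq 516\vr \leq 536\vr$. The positive case is then immediate: $T \geq 0$ makes $f_\infty \geq 0$ and $\tilde\phi(f_\infty) \geq 0$; for $x \geq 0$, $x(\sigma(t)) \geq 0$, so $Sx \geq 0$. The main technical obstacle is the continuity of $Sx$ at boundary points where infinitely many $U_j$ accumulate, and the key insight is that the escape of the selection indices $j_n$ to infinity in such a configuration transfers the limiting behavior of $x$ from its coordinates to its value at $\infty$—this is what replaces the $\sigma$-Dedekind completeness used in the proof of Theorem \ref{t:c_0}(3).
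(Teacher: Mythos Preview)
Your approach is correct and takes a genuinely different route from the paper. The paper proceeds in two stages: it first invokes Theorem~\ref{t:c0->C(K)} to approximate $T|_{c_0}$ by a disjointness-preserving $V$, extends to $U : c \to C(K)$ by declaring $U\one = T\one$, and then applies Lemma~\ref{l:c disj} (checking hypothesis~\eqref{eq:DP on c} with $\sigma = 2\vr$) to perturb $U$ into a DP map; the constant $536\vr$ arises as $514\vr + 22\vr$. You instead build $S$ in one shot as an explicit weighted composition on $c \cong C(\NN^*)$, with weight $\tilde\phi \circ f_\infty$ and selection $\sigma$ read off from the disjoint open sets $U_i = \{|f_i| > \vr\}$; this is more transparent, avoids the intermediate lemmas, and yields the slightly sharper bound $516\vr$. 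Two points need tidying. First, the inequality $|f_\infty(t_0)| \leq 2\vr$ at $t_0 \in \partial U_i$ does \emph{not} follow from the pointwise estimate $|f_i(t_0)| \wedge |f_\infty(t_0) - f_i(t_0)| \leq \vr$ at $t_0$ itself (that is vacuous once $|f_i(t_0)| = \vr$); you need to pass to the limit from inside $U_i$, where $|f_\infty - f_i| \leq \vr$ and $|f_i| \to \vr$, so continuity of $f_\infty$ gives the bound. Second, for a general compact Hausdorff $K$ continuity must be checked with nets or neighborhoods rather than sequences. The cleanest fix is to observe that whenever $h(t_0) := \tilde\phi(f_\infty(t_0)) \neq 0$ one has $t_0 \notin \bigcup_j \overline{U_j}$ (by the boundary estimate just mentioned), so for each $N$ there is a neighborhood of $t_0$ disjoint from $U_1, \ldots, U_N$; on that neighborhood $\sigma$ takes values in $\{N+1, N+2, \ldots\} \cup \{\infty\}$ and hence $x \circ \sigma$ is uniformly close to $x(\infty)$, while if $h(t_0) = 0$ the crude bound $|Sx| \leq |h|\,\|x\|$ already gives continuity.
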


Throughout the proof, we identify $c_0$ with its canonical image in $c$,
then $c = \spn[c_0, \one]$. As before, we denote the canonical basis
of $c_0$ by $(\delta_i)_{i \in \NN}$. The following lemma can be
easily verified.

\begin{lemma}\label{l:norm c}
For any operator $V : c \to X$ ($X$ is an arbitrary Banach space),
we have $\|V\| \leq 2 \|V|_{c_0}\| + \|V \one\|$.
\end{lemma}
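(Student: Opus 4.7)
The plan is to use the standard decomposition $c = \CC\one \oplus c_0$ (or $\RR\one \oplus c_0$ in the real case). Given any $x \in c$, let $\alpha = \lim_j x_j$, and set $y = x - \alpha \one \in c_0$. Then $x = \alpha \one + y$, and the task reduces to controlling $\|\alpha\|$ and $\|y\|_\infty$ in terms of $\|x\|_\infty$.

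The first bound is immediate: $|\alpha| = |\lim_j x_j| \leq \|x\|_\infty$. For the second, the triangle inequality gives
$$
\|y\|_\infty = \|x - \alpha \one\|_\infty \leq \|x\|_\infty + |\alpha| \leq 2 \|x\|_\infty .
$$

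Now apply $V$, using linearity:
$$
\|Vx\| \leq |\alpha| \, \|V\one\| + \|Vy\| \leq |\alpha|\, \|V\one\| + \|V|_{c_0}\| \, \|y\|_\infty \leq \big( \|V\one\| + 2 \|V|_{c_0}\| \big) \|x\|_\infty .
$$
Taking the supremum over $x$ in the unit ball of $c$ yields $\|V\| \leq 2\|V|_{c_0}\| + \|V\one\|$, as claimed. There is no real obstacle here; the only thing to be careful of is the factor $2$ in the estimate $\|y\|_\infty \leq 2\|x\|_\infty$, which is precisely what produces the $2$ in front of $\|V|_{c_0}\|$ in the conclusion.
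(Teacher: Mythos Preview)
Your proof is correct and follows essentially the same approach as the paper: both use the decomposition of $x \in c$ as $(\lim_j x_j)\one$ plus a $c_0$ part, together with the observation that this $c_0$ part has norm at most $2\|x\|_\infty$. The paper phrases this in terms of the limit projection $Q : c \to \FF\one$ with $\|Q\|=1$ and $\|I-Q\|\leq 2$, then writes $V = VQ + V(I-Q)$; your elementwise argument is the same computation unwound.
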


\begin{proof}
Consider the projection $Q$ from $c$ to $\FF \one$, defined by
$$
Q \big( (\alpha_1, \alpha_2, \ldots) \big) =
\lim_i \alpha_i \one .
$$
Clearly $\|Q\| = 1$, hence $\|I_c - Q\| \leq 2$. Also,
$\ker Q = \ran(I-Q) = c_0$.
We complete the proof by writing $V = VQ + V(I-Q)$.
\end{proof}

We also need a simple fact about complex numbers. Fix $c > 0$. For a complex number $z = |z| e^{\iota \arg z}$, define
$\phi_c(z) = \big(|z| - c\big)_+ e^{\iota \arg z}$.

\begin{lemma}\label{l:complex}
Given $c>0$, for any $z, w \in \CC$, we have
$|\phi_c(z) - \phi_c(w)| \leq |z-w|$.
\end{lemma}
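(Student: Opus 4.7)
The plan is to pass to polar form and reduce the claim to the one-dimensional $1$-Lipschitz property of the soft-threshold $F(r) = (r-c)_+$ on $[0,\infty)$.

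First I would write $z = re^{\iota \alpha}$ and $w = se^{\iota \beta}$ with $r,s \geq 0$, so that $\phi_c(z) = F(r)\,e^{\iota \alpha}$ and $\phi_c(w) = F(s)\,e^{\iota \beta}$. Note that $F$ is $1$-Lipschitz and satisfies $0 \leq F(r) \leq r$; the degenerate case $z=0$ (or $w=0$) is handled by assigning an arbitrary argument, since then the corresponding amplitude vanishes and the ambiguous phase is killed. Expanding the squared moduli gives
\begin{equation*}
|z-w|^2 - |\phi_c(z) - \phi_c(w)|^2 = \bigl[r^2 - F(r)^2\bigr] + \bigl[s^2 - F(s)^2\bigr] - 2\cos(\alpha-\beta)\bigl[\,rs - F(r)F(s)\,\bigr],
\end{equation*}
and each of the three bracketed quantities is non-negative since $0 \leq F(t) \leq t$.

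Next I would split on the sign of $\cos(\alpha-\beta)$. If $\cos(\alpha-\beta) \leq 0$, the right-hand side is manifestly non-negative and we are done. If $\cos(\alpha-\beta) \in [0,1]$, I would bound the cosine by $1$, so that it suffices to show
\begin{equation*}
\bigl[r^2 - F(r)^2\bigr] + \bigl[s^2 - F(s)^2\bigr] - 2\bigl[rs - F(r)F(s)\bigr] \geq 0,
\end{equation*}
which rearranges to $(r-s)^2 \geq (F(r)-F(s))^2$, i.e.\ the $1$-Lipschitz property of $F$ on $[0,\infty)$ (trivially checked on the three sub-cases $r,s \leq c$, $r,s \geq c$, and the mixed case).

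There is essentially no obstacle here: the only mild wrinkle is the fact that the formula $\phi_c(z) = (|z|-c)_+ e^{\iota \arg z}$ is literally ill-defined at $z=0$, but this is immaterial since $(|z|-c)_+$ vanishes on $\{|z|\leq c\}$ and so $\phi_c$ is continuous (and equal to $0$) there, which is exactly what the polar rewriting above records.
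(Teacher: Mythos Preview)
Your proof is correct and follows essentially the same approach as the paper: both expand $|z-w|^2 - |\phi_c(z)-\phi_c(w)|^2$ in polar coordinates via the Law of Cosines. The paper organizes the case split by whether $|w| \leq c$ (handled by the triangle inequality) versus $|w| > c$ (where the difference factors explicitly as $(2-\kappa)(|z|+|w|-c)$ with $\kappa = 2\cos(\arg z - \arg w)$), whereas you split on the sign of $\cos(\alpha-\beta)$ and reduce to the $1$-Lipschitz property of the soft threshold; this is a minor reorganization rather than a genuinely different idea.
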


\begin{proof}
By scaling, we may assume $c = 1$. Without loss of generality, $|z| \geq |w|$.

The case of $|w| \leq 1$ is easy: $\phi_c(w)=0$ and by the triangle inequality,
$$
|z-w| \geq |z| - |w| \geq \big(|z|-1\big)_+ = |\phi_c(z) - \phi_c(w)|.
$$
Now, if $|z| \geq |w| > 1$, use the Law of Cosines:
$\big| z - w \big|^2 = a^2 + b^2 - \kappa ab$, where
$a = |z|$, $b = |w|$, and $\kappa = 2 \cos (\arg z - \arg w)$
(note that $-2 \leq \kappa \leq 2$). Similarly,
$\big| \phi(z) - \phi(w) \big|^2 = (a-1)^2 + (b-1)^2 - \kappa (a-1)(b-1)$.
Thus,
$$
\big| z - w \big|^2 - \big| \phi(z) - \phi(w) \big|^2 =
(2-\kappa)(a+b-1) \geq 0 .
$$
\end{proof}

\begin{lemma}\label{l:c disj}
Suppose $K$ is a compact Hausdorff space, and a contraction
$U : c \to C(K)$ is $\sigma$-DP.
Suppose, moreover, that $U|_{c_0}$ is disjointness preserving,
and the functions $f = U \one$ and $f_i = U \delta_i$ are such that
\begin{equation}
\label{eq:DP on c}
{\textrm{If }} i \in \NN {\textrm{ and }} t \in K {\textrm{ are such that }}
|f_i(t)| > \sigma , {\textrm{ then }} |f(t) - f_i(t)| \leq \sigma .
\end{equation}
Then there exists a DP operator $S : c \to C(K)$ so that
$\|U - S\| \leq 11 \sigma$. If $U$ is positive, then
$S$ can be chosen positive as well.
\end{lemma}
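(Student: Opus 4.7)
The plan is to build the DP operator $S$ from an explicit continuous weight together with a clopen partition of an open subset of $K$, where the clopen partition is manufactured out of the hypothesis \eqref{eq:DP on c}. Set $B_k = \{t \in K : |f_k(t)| > \sigma\}$ and $W = \{t \in K : |f(t)| > 2\sigma\}$, both open, and put $U_k = B_k \cap W$. Since $U|_{c_0}$ is disjointness preserving, the $f_k$'s (hence the $B_k$'s, hence the $U_k$'s) are pairwise disjoint. The crucial step will be verifying that each $U_k$ is relatively clopen in $W$: openness is clear, and for closedness suppose $t \in W \cap \overline{B_k}$, so that $|f_k(t)| \geq \sigma$ by continuity of $f_k$, while \eqref{eq:DP on c} extends by continuity to $|f - f_k| \leq \sigma$ on $\overline{B_k}$; combining $|f(t)| > 2\sigma$ with $|f(t)| \leq |f_k(t)| + \sigma$ forces $|f_k(t)| > \sigma$ and hence $t \in B_k$.

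Next, define $h = \phi_{2\sigma} \circ f \in C(K)$ (continuity via Lemma \ref{l:complex}) and $g_k = \mathbf{1}_{U_k} \cdot h$. The clopen-ness of $U_k$ in $W = \{h \neq 0\}$, together with the vanishing of $h$ on $\partial W$, ensures that each $g_k$ is a continuous function on $K$. Define $S : c \to C(K)$ by $S\one = h$, $S\delta_k = g_k$, and $Sx = \alpha h + \sum_k x'_k g_k$ whenever $x = \alpha \one + x'$ with $\alpha = \lim x_j$ and $x' \in c_0$; the series converges in $C(K)$ since at each $t \in K$ at most one $g_k(t)$ is nonzero. That $S$ is disjointness preserving will then follow from the identity $h|_{U_k} = g_k|_{U_k}$: for disjoint $x, y \in c$ one has $\alpha \beta = 0$ where $\alpha = \lim x_j$, $\beta = \lim y_j$, and on $U_k$ both $Sx(t) = x_k h(t)$ and $Sy(t) = y_k h(t)$ contain the vanishing product $x_k y_k$, while off $\bigcup_k U_k$ at most one of $Sx(t) = \alpha h(t)$ and $Sy(t) = \beta h(t)$ can be nonzero.

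To bound $\|U - S\|$, apply Lemma \ref{l:norm c}, which reduces matters to estimating $\|(U - S)|_{c_0}\|$ and $\|(U - S)\one\|$. The second equals $\|f - \phi_{2\sigma}(f)\|_\infty \leq 2\sigma$. For the first, at each $t$ the sum $\sum_k |f_k(t) - g_k(t)|$ has at most one non-zero term (the $f_k$'s are disjoint, the $U_k$'s are disjoint, and $g_k(t) \neq 0$ forces $t \in U_k \subseteq B_k$, so $f_k(t) \neq 0$); a case split on $t \in U_k$, $t \in B_k \setminus U_k$, and $t \notin B_k$, combined with $|f - f_k| \leq \sigma$ on $\overline{B_k}$ and $|f - h| \leq 2\sigma$, yields $3\sigma$ in each case. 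The resulting bound $\|U - S\| \leq 6\sigma + 2\sigma = 8\sigma$ sits comfortably inside the claimed $11\sigma$. Positivity of $S$ when $U$ is positive is automatic, since then $f, f_k \geq 0$ force $h, g_k \geq 0$, and a direct computation gives $Sx \geq 0$ whenever $x \geq 0$ in $c$. The delicate point throughout is the clopen property in the first paragraph, without which the indicator $\mathbf{1}_{U_k}$ could not be absorbed into a continuous $g_k$ and the DP structure of $S$ would collapse.
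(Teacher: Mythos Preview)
Your proof is correct and takes a genuinely different route from the paper. The paper never uses any topology on $K$ beyond continuity of the functions involved: it first sets $h = \phi_\sigma(f)$ and $h_i = \phi_\sigma(f_i)$, then builds an intermediate function $k_i$ as a \emph{continuous} convex combination of $h_i$ and $h$ (via a piecewise-linear ramp $\rho$), and finally puts $g_i = \phi_{2\sigma}(k_i)$, $g = \phi_{2\sigma}(h)$. The blending function $\rho$ is what guarantees $g_i \in C(K)$ while forcing $g_i(t) = g(t)$ wherever $g_i(t) \neq 0$. You instead make the sharper observation that $U_k = B_k \cap W$ is relatively clopen in $W = \{h \neq 0\}$ (this is the real content of the hypothesis \eqref{eq:DP on c}), which lets you multiply by the hard indicator $\mathbf{1}_{U_k}$ and still land in $C(K)$. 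Your construction is cleaner and even yields the better constant $8\sigma$; the paper's approach has the advantage of being purely order-theoretic/functional-calculus and would transfer verbatim to more general range lattices where ``clopen in a cozero set'' has no meaning. One small point: your sentence ``the series converges in $C(K)$ since at each $t$ at most one $g_k(t)$ is nonzero'' is a little thin---pointwise finiteness alone does not give uniform convergence. The correct justification is that the $g_k$ have disjoint supports and $\|g_k\|_\infty \leq 1$, so for $x' \in c_0$ one has $\|\sum_{k>N} x'_k g_k\|_\infty = \sup_{k>N} |x'_k|\,\|g_k\|_\infty \to 0$.
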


\begin{proof}
We shall construct $g, g_1, g_2, \ldots \in C(K)$ so that:
\begin{enumerate}
\item
For any $i$, $\|g_i - f_i\| \leq 4\sigma$.
\item
$\|g - f\| \leq 3\sigma$.
\item
The functions $g_1, g_2, \ldots$ are disjoint; if $i$ and $t$
are such that $g_i(t) \neq 0$, then $g_i(t) = g(t)$.
\item
If the functions $f, f_1, f_2, \ldots$ are positive, then the same
holds for $g, g_1, g_2, \ldots$.
\end{enumerate}
Once these functions are selected, we define $S : c \to C(K)$
by setting $S \delta_i = g_i$ ($i \in \NN$), and $S \one = g$.
Then $\|(S-U)|_{c_0}\| \leq 4\sigma$, and $\|(S-U) \one\| \leq 3\sigma$,
hence, by Lemma \ref{l:norm c}, $\|S - U\| \leq 11\sigma$.

Moreover, $S$ is disjointness preserving. Indeed, consider two
disjoint elements of $c$: $x = (\alpha_i)_{i \in A}$ and
$y = (\beta_i)_{i \in B}$,
where the sets $A$ and $B$ are disjoint. If the sets
$\{i \in A : \alpha_i \neq 0\}$ and $\{i \in B : \beta_i \neq 0\}$
are both infinite, then
$x$ and $y$ belong to $c_0$, and we finish the proof invoking the disjointess
of the functions $g_i$. Otherwise, suppose $A$ is finite. Then we
can assume that $B = \NN \backslash A$. Let $\beta = \lim_i \beta_i$,
and write
$$
y = \beta \one + \sum_{i=1}^\infty \gamma_i \delta_i , \, \, {\textrm{where}} \,
\gamma_i = \left\{ \begin{array}{ll}
   \beta_i - \beta   &   i \in B   \\
   - 1               &   i \in A       \end{array} \right. .
$$
Then $Sx = \sum_{i \in A} \alpha_i g_i$, and
$$
Sy = g - \sum_{i \in A} g_i + \sum_{i \in B} \gamma_i g_i .
$$
If $[Sx](t) \neq 0$, then there exists $i \in A$ so that $g_i(t) \neq 0$,
and therefore, $[Sy](t) = g(t) - g_i(t) = 0$. Thus, $Sx$ and $Sy$ are disjoint.

Finally, suppose $g, g_1, g_2, \ldots$ are positive. For
$x = (\alpha_1, \alpha_2, \ldots) \in c_+$, let $\alpha = \lim_i \alpha_i$.
Then
$$
Sx = \alpha g + \sum_{i=1}^\infty (\alpha_i - \alpha) g_i \geq 0 .
$$
Indeed, suppose $t \in K$ is such that there exists $i$ with $g_i(t) > 0$.
Such an $i$ is unique, hence
$$
[Sx](t) = \alpha g(t) - (\alpha_i - \alpha) g_i(t) = \alpha_i g(t) \geq 0 .
$$
If there is no such $i$, then $[Sx](t) = \alpha g(t) \geq 0$.

To construct $g, g_1, g_2, \ldots$,
let $h = \phi_\sigma(f)$ (that is,
$h(t) = (|f(t)| - \sigma)_+ e^{\iota \arg f(t)}$).
For $i \in \NN$, set $h_i = \phi_\sigma (f_i)$.
Clearly $\|f-h\| \leq \sigma$, and $\|f_i - h_i\| \leq \sigma$ for any $i$.
Also, if $i$ and $t$ are such that $h_i(t) \neq 0$, then
$|h(t) - h_i(t)| \leq \sigma$, by Lemma \ref{l:complex} and \ref{eq:DP on c}.

Now define $\rho : \RR \to [0,1]$ via
$$
\rho(t) = \left\{ \begin{array}{ll}
   0          &     t \leq 0,     \\
   t/\sigma   &     0 \leq t \leq \sigma,     \\
   1          &     t \geq \sigma,
\end{array} \right.
$$
and let
$$
k_i(t) = \Big(1 - \rho \big(|h_i(t)| \big) \Big)  h_i(t) + \rho \big(|h_i(t)| \big) h(t) .
$$
Clearly the function $k_i$ is continuous, and $k_i(t) = 0$ whenever $h_i(t) = 0$.
If $h_i(t) \neq 0$, then
$$
\big|k_i(t) - h_i(t)\big| =
\rho \big(|h_i(t)| \big) \big|h(t) - h_i(t)\big| < \sigma ,
$$
hence $\|h_i - k_i\| \leq \sigma$. Finally, if $|k_i(t)| > 2 \sigma$, then
$k_i(t) = h(t)$. Indeed, if $|k_i(t)| > 2 \sigma$, then $|h_i(t)| > \sigma$,
hence $\rho \big(|h_i(t)| \big) = 1$, yielding $k_i(t) = h(t)$.

Now set $g_i = \phi_{2\sigma}(k_i)$, and $g = \phi_{2\sigma}(h)$.
From the above, if $g_i(t) \neq 0$, then $g_i(t) = g(t)$. Clearly the functions
$g_i$ are disjoint. Furthermore,
$$
\|f_i - g_i\| \leq \|f_i - h_i\| + \|h_i - k_i\| + \|k_i - g_i\| \leq 4 \sigma ,
$$
and
$$
\|f - g\| \leq \|f - h\| + \|h - g\| \leq 3 \sigma .
$$
Thus, $g, g_1, g_2, \ldots$ have the desired properties.
\end{proof}

\begin{corollary}\label{c:c disj}
Suppose $K$ is a compact Hausdorff space, and a contraction
$U : c \to C(K)$ is $\sigma$-DP.
Suppose, moreover, that $U|_{c_0}$ is disjointness preserving.
Then there exists a DP operator $S : c \to C(K)$ so that
$\|U - S\| \leq 11 \sigma$. If $U$ is positive, then
$S$ can be chosen positive as well.
\end{corollary}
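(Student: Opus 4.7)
The plan is to deduce Corollary \ref{c:c disj} directly from Lemma \ref{l:c disj} by verifying that the additional hypothesis \eqref{eq:DP on c} is automatic under the assumptions of the corollary. Once this reduction is made, the conclusion (existence of a DP operator $S$ with $\|U - S\| \leq 11\sigma$, positive if $U$ is positive) follows verbatim from the lemma.

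To carry out the verification, set $f = U\one$ and $f_i = U\delta_i$ as in the lemma. For each $i \in \NN$, observe that the vectors $\delta_i$ and $\one - \delta_i$ are disjoint elements of $c$, both of norm at most $1$. Since $U$ is $\sigma$-DP and contractive, this yields
$$
\big\| |f_i| \wedge |f - f_i| \big\|_{C(K)} = \big\| |U\delta_i| \wedge |U(\one - \delta_i)| \big\|_{C(K)} \leq \sigma.
$$
Evaluating pointwise at any $t \in K$ gives $|f_i(t)| \wedge |f(t) - f_i(t)| \leq \sigma$. Consequently, whenever $|f_i(t)| > \sigma$, the minimum cannot be realized by $|f_i(t)|$, so it must be that $|f(t) - f_i(t)| \leq \sigma$. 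This is precisely condition \eqref{eq:DP on c}.

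With \eqref{eq:DP on c} in hand, together with the hypothesis that $U|_{c_0}$ is disjointness preserving, all the assumptions of Lemma \ref{l:c disj} are met. Applying the lemma produces the desired DP operator $S : c \to C(K)$ with $\|U - S\| \leq 11\sigma$, preserving positivity when $U$ is positive. There is no genuine obstacle here: the entire content of the corollary is the observation that one does not need to assume \eqref{eq:DP on c} separately, since for a $\sigma$-DP contraction the pair $(\delta_i, \one - \delta_i)$ forces it automatically.
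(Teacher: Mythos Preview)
Your proposal is correct and follows essentially the same approach as the paper: verify condition \eqref{eq:DP on c} by applying the $\sigma$-DP hypothesis to the disjoint pair $(\delta_i, \one - \delta_i)$, then invoke Lemma \ref{l:c disj}. The argument and the key pair of test vectors are identical to the paper's proof.
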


\begin{proof}
Let $f_i = U \delta_i$ and $f = U \one$. The functions $f_i$ are
disjoint. Now fix $i$ and $t$, and set $x = \delta_i$ and $y = \one - \delta_i$.
Both $x$ and $y$ belong to the unit ball of $c$, hence
$$
|f_i(t)| \wedge |f(t) - f_i(t)| \leq \big\| |Tx| \wedge |Ty| \big\| \leq \sigma .
$$
Thus, \eqref{eq:DP on c} holds. To complete the proof, apply Lemma \ref{l:c disj}.
\end{proof}

\begin{proof}[Proof of Theorem \ref{t:c->C(K)}]
By Theorem \ref{t:c0->C(K)}, there exists a disjointness preserving
map $V : c_0 \to C(K)$ so that $\|V\| \leq \|T\|$, and
$\|V - T|_{c_0}\| \leq 257 \vr$ (if $T$ is positive, then $0 \leq V \leq T$).
Define $U : c \to C(K)$ by setting $U|_{c_0} = V$ and $U \one = T \one$.
By Lemma \ref{l:norm c}, $\|T - U\| \leq 514 \vr$.

Set $f = T \one = U \one$, $f_i = U \delta_i$, and $F_i = T \delta_i$.
Note that, if $T$ is positive, then so is $V$. Indeed, by the construction
in the proof of Theorem \ref{t:c0->C(K)}, $0 \leq f_i \leq F_i$ for every $i$.
Note that $T(\one - \delta_i) = f - F_i \geq 0$ for every $i$, hence
$f \geq f_i$. For $x = (\alpha_1, \alpha_2, \ldots) \in c_+$ set
$\alpha = \lim_i \alpha_i$, then
$$
Ux = \alpha f + \sum_{i=1}^n (\alpha_i - \alpha) f_i .
$$
Fix $t \in K$. If $f_i(t) = 0$ for every $i$, then $[Ux](t) = \alpha f(t) \geq 0$.
Otherwise, there is a unique $i$ so that $f_i(t) > 0$, then
$$
[Ux](t) = \alpha f(t) + (\alpha_i - \alpha) f_i(t) =
\alpha_i f_i(t) + \alpha(f(t) - f_i(t)) \geq 0 .
$$

We shall show that \eqref{eq:DP on c} holds with $\sigma = 2\vr$ --
that is, if $i$ and $t$ satisfy $f_i(t) \neq 0$, then
$|f_i(t) - f(t)| \leq 2\vr$. Once this is done, we can apply
the proof of Lemma \ref{l:c disj} to obtain $S$ with the desired properties.

Let $x = \delta_i$ and $y = \one - \delta_i$. In the above notation,
$Tx = F_i$ and $Ty = f - F_i$, hence, for any $t \in K$,
$\min\{ |F_i(t)|, |f(t) - F_i(t)| \} \leq \vr$.
By the proof of Theorem \ref{t:c0->C(K)},
$|F_i(t) - f_i(t)| \leq \vr$ (we use the fact that
$|\phi(s) - s| \leq \vr$ for every $s$).

Now suppose $|f_i(t)| \geq 2 \vr$. Then $|F_i(t)| \geq 2 \vr$ as well,
hence $|f(t) - F_i(t)| \leq \vr$. The triangle inequality implies
$$
|f(t) - f_i(t)| \leq |f(t) - F_i(t)| + |f_i(t) - F_i(t)| \leq 2\vr .
$$
By the proof of Lemma \ref{l:c disj}, there exists a ``good'' $S$
with $\|U - S\| \leq 22\vr$. By the triangle inequality, $\|T - S\| \leq 536 \vr$.
\end{proof}

\section{Positive operators from $\ell_p$ to $L_p$}\label{s:on l_1}

We start this section exploring the case of $\vr$-DP operators defined on the space $\ell_1$.
We use the following classical result of L. Dor \cite[Corollary 3.2]{Dor}.
Suppose $(\Omega,\mu)$ is a measure space, $(f_n)_{n\in\mathbb N}$
are functions in $L_1(\Omega, \mu)$, and there exists $\theta \in (0,1]$
so that the inequality $\|\sum_{i=1}^n a_i f_i\|\geq\theta\sum_{i=1}^n|a_i|$
holds for any finite sequence $(a_i)_{i=1}^n$.
Then there are disjoint measurable sets $(A_n)_{n\in\mathbb{N}}$ in $\Omega$ so that
$$
\inf_n \int_{A_n} |f_n|d\lambda\geq 1-\frac43(1-\theta).
$$
Dor proved this theorem for the Lebesgue measure on $[0,1]$. However
(as noted in e.g. \cite{Al}) an inspection shows that the proof works
for an arbitrary measure space. Moreover, one can select the sets $A_i$ from
the $\sigma$-algebra generated by the functions $(f_n)_{n\in\mathbb N}$.

\begin{theorem}\label{t:ell1}
Suppose $(\Omega,\mu)$ is a measure space, and
$T:\ell_1\rightarrow L_1(\mu)$ is a positive $\vr$-DP operator, with
$\vr \in (0,\|T\|/16)$.
Then there exists a positive disjointness preserving operator
$S:\ell_1\rightarrow L_1(\mu)$ such that $0\leq S\leq T$ and
$\|T-S\| \leq 2 \sqrt{2\vr\|T\|/3}$.
\end{theorem}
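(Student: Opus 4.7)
The plan is to exploit the rigid geometry of $\ell_1$ and $L_1$ through L.\ Dor's stability theorem stated just above. After rescaling, assume $\|T\|=1$, and set $f_i = Te_i \geq 0$. The operator $S$ will have the form $Se_i = f_i\chi_{A_i}$ for suitable pairwise disjoint measurable sets $A_i$; this automatically ensures $0 \leq S \leq T$ and that $S$ is disjointness preserving, since the functions $(f_i\chi_{A_i})$ are pairwise disjointly supported. Since $\|T-S\|=\sup_i\|(T-S)e_i\|_1$, the task reduces to finding $A_i$ making each $\int_{A_i^c}f_i$ uniformly small.

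I introduce a threshold $\tau > 0$ (to be optimized) and partition the indices: for $i \notin I:=\{i:\|f_i\|_1\geq\tau\}$, simply set $Se_i=0$, which contributes at most $\tau$ to the error. For $i \in I$, normalize $g_i = f_i/\|f_i\|_1 \in L_1$ and apply Dor's theorem with $\theta = 1-2\vr/\tau$. The key step is verifying Dor's hypothesis: for any finitely supported scalar sequence $(a_i)_{i \in I}$ with $\sum|a_i|=1$, one needs
\[
\Big\|\sum_{i \in I} a_i g_i\Big\|_1 \geq \theta.
\]
Splitting $a_i = a_i^+ - a_i^-$ with corresponding index sets $P,N$, the identity $\|u-v\|_1 = \|u\|_1+\|v\|_1-2\|u\wedge v\|_1$ valid for positive $u,v\in L_1$ gives $\|\sum a_i g_i\|_1 = 1 - 2\|u\wedge v\|_1$ with $u=\sum_P a_i^+ g_i$, $v=\sum_N a_i^- g_i$. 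But $u=Tx$ and $v=Ty$ for $x=\sum_P(a_i^+/\|f_i\|_1)e_i$ and $y=\sum_N(a_i^-/\|f_i\|_1)e_i$, which are positive and disjoint in $\ell_1$, each of norm at most $1/\tau$ since $\|f_i\|_1 \geq \tau$ on $I$. The $\vr$-DP hypothesis immediately yields $\|u\wedge v\|_1 \leq \vr/\tau$, hence $\theta\geq 1-2\vr/\tau$ as claimed.

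Dor's theorem now supplies pairwise disjoint measurable sets $(A_i)_{i \in I}$ with $\int_{A_i} g_i \geq 1 - \tfrac{4}{3}\cdot\tfrac{2\vr}{\tau} = 1 - \tfrac{8\vr}{3\tau}$, so that $\|f_i - f_i\chi_{A_i}\|_1 \leq \tfrac{8\vr}{3\tau}\|f_i\|_1 \leq \tfrac{8\vr}{3\tau}$. Defining $S$ as above and taking the supremum over all $i$, we obtain $\|T-S\| \leq \max\{\tau,8\vr/(3\tau)\}$. Optimizing by setting $\tau = 2\sqrt{2\vr/3}$ balances the two error terms and yields $\|T-S\|\leq 2\sqrt{2\vr/3}$ in the normalized case; undoing the rescaling gives the claimed $\|T-S\|\leq 2\sqrt{2\vr\|T\|/3}$. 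The restriction $\vr<\|T\|/16$ guarantees that $\theta$ sits comfortably above $1/4$, so that Dor's conclusion produces a meaningful positive lower bound.

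The main obstacle is step three, the verification of Dor's hypothesis: the $\vr$-DP inequality controls meets of images of disjoint positive vectors, and the thresholding device $\tau$ is precisely what converts the operator-level bound $\vr\max\{\|x\|,\|y\|\}$ into the $\ell_1$-frame constant $\theta = 1-2\vr/\tau$ needed to feed into Dor. The remaining verifications (positivity, $S \leq T$, disjointness preservation, operator boundedness, and the norm identity on $\ell_1$) are routine consequences of the construction.
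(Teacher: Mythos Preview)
Your proof is correct and follows essentially the same approach as the paper's: threshold the indices by $\tau=2\sqrt{2\vr/3}$, normalize the surviving $f_i$'s, verify Dor's hypothesis via the identity $\|u-v\|_1=\|u\|_1+\|v\|_1-2\|u\wedge v\|_1$ together with the $\vr$-DP bound, and read off disjoint supports from Dor's theorem. The paper's argument is identical in structure and in the choice of threshold; the only cosmetic difference is that the paper fixes $c=2\sqrt{2\vr/3}$ from the outset rather than presenting it as an optimization at the end.
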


\begin{proof}
As usual, we can assume $\|T\| = 1$.
Then we need to prove the existence of a disjointess
preserving $S:\ell_1\rightarrow L_1(\mu)$ such that $0\leq S\leq T$ and
$\|T-S\| \leq 2 \sqrt{2\vr/3}$.

For $n\in \mathbb{N}$, let $f_n=T\delta_n$.
Since $\|T\|\leq1$ we have $\|f_n\|\leq1$.
By positivity, $f_n \geq 0$. Let
$$
c = 2 \sqrt{2\vr/3}  \, {\textrm{  and  }} \,
M=\big\{n\in\mathbb{N}:\|f_n\|\geq c\big\} .
$$

Now, for $n\in M$ let $g_n=f_n/\|f_n\|$.
These form a normalized sequence in $L_1(\mu)$
which is equivalent to the unit vector basis of $\ell_1$.
In fact, given real scalars $(a_n)_{n\in M}$, let
$P=\{n\in M : a_n>0\}$, $N=\{n\in M : a_n<0\}$ and
$x=\sum_{n\in P} |a_n| g_n$, $y=\sum_{n\in N} |a_n| g_n$. We have
\begin{align*}
\Big\|\sum_{n\in M} a_n g_n\Big\|
&=\Big\|\sum_{n\in P} |a_n| g_n-\sum_{n\in N} |a_n| g_n\Big\|\\
&=\Big\|x-x\wedge y+ x\wedge y- y\Big\|\\
&=\Big\|x-x\wedge y\Big\|+ \Big\|x\wedge y- y\Big\|\\
&\geq\|x\|+\|y\|-2\|x\wedge y\| .
\end{align*}

Since $g_n\geq0$ and $\|g_n\|=1$, we have $\|x\|=\sum_{n\in P}|a_n|$ and
$\|y\|=\sum_{n\in N} |a_n|$. Now, since $P\cap N=\emptyset$, and $P,N\subset M$ we have
\begin{align*}
\|x\wedge y\|
&=\Big\|\Big(\sum_{n\in P}  \frac{|a_n|}{\|f_n\|}f_n\Big)
  \wedge\Big(\sum_{n\in N} \frac{|a_n|}{\|f_n\|}f_n\Big)\Big\|\\
&= \Big\|T\Big(\sum_{n\in P}  \frac{|a_n|}{\|f_n\|}\delta_n\Big)
  \wedge T\Big(\sum_{n\in N} \frac{|a_n|}{\|f_n\|}\delta_n\Big)\Big\|\\
&\leq
 \vr \max\Big\{ \sum_{n\in P}  \frac{|a_n|}{\|f_n\|} ,
    \sum_{n\in N}  \frac{|a_n|}{\|f_n\|} \Big\}\\
&\leq \frac{\vr}{c} (\|x\| + \|y\|) .
\end{align*}

Hence, we get that
$$
\Big\|\sum_{n\in M} a_n g_n\Big\|\geq \Big(1-\frac{2\vr}{c}\Big)\sum_{n\in M}|a_n|.
$$

Now, by Dor's theorem quoted above, 
there exist pairwise disjoint measurable sets
$(A_n)\subset \Omega$ such that
$$
\|g_n|_{A_n}\|\geq 1-\frac{8 \vr}{3c} = 1-c .
$$

Let us define now the operator $S:\ell_1\rightarrow L_1(\mu)$ given by
$$
S\delta_n=\left\{
\begin{array}{ccc}
 f_n|_{A_n} &   & n\in \,M  \\
  &   &   \\
0  &   & \textrm{elsewhere}.
\end{array}
\right.
$$

Since the $(A_n)$ are pairwise disjoint, $S$ is disjointness preserving.
We have $\|T-S\| = \sup_n \|(T-S)\delta_n\|$.
Now, for $n\in M$ we have
$$
\|(T-S)\delta_n\|=\|f_n|_{A_n^c}\|=\|f_n\|-\|f_n|_{A_n}\|=
\|f_n\|(1-\|g_n|_{A_n}\|) \leq c,
$$
while for $n\notin M$ we get
$$
\|(T-S)\delta_n\|=\|f_n\|\leq c .
$$
Thus, $\|T-S\| \leq c$.
\end{proof}

\begin{theorem}\label{t:lq->Lq}
Suppose $1 < q < \infty$, $\vr \in (0,1/8^{\frac1q})$, and $(\Omega, \mu)$ is a measure space.
If $T : \ell_q \to L_q(\mu)$ is positive and $\vr$-DP,
then there exists $S : \ell_q \to L_q(\mu)$ so that $0 \leq S \leq T$,
and
$$
\|T-S\| \leq 2^8\vr + 2\sqrt\frac{2\vr\|T\|}{3} .
$$
\end{theorem}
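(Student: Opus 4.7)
The plan is to follow the template of Theorem \ref{t:ell1}, reducing the $L_q$ problem to Dor's theorem for $L_1$ via a $q$-th power trick. Set $f_n = T\delta_n \geq 0$, pick $c = 2\sqrt{2\vr\|T\|/3}$, and let $M = \{n : \|f_n\|_q \geq c\}$. The goal is to locate pairwise disjoint measurable sets $(A_n)_{n \in M}$ on which each $f_n$ is essentially concentrated in $L_q$-norm, and then define $S\delta_n = f_n\chi_{A_n}$ for $n \in M$ and $S\delta_n = 0$ otherwise. Linearly extending, $S$ is automatically positive, disjointness preserving, and dominated by $T$.

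To find these sets, I pass to $L_1$ by considering the normalized positive functions $\tilde g_n = f_n^q/\|f_n\|_q^q \in L_1(\mu)$ for $n \in M$, and seek a lower $\ell_1$-estimate
\[
\Big\|\sum_{n \in M} a_n \tilde g_n\Big\|_1 \geq \big(1 - 2(\vr/c)^q\big) \sum_{n \in M} |a_n|.
\]
Dor's theorem then yields disjoint sets $A_n$ with $\int_{A_n} \tilde g_n \geq 1 - \tfrac{8}{3}(\vr/c)^q$, which translates to $\|f_n\chi_{A_n^c}\|_q \leq (8/3)^{1/q}(\vr/c)\|f_n\|_q$ for $n \in M$. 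Using the elementary inequality $(8/3)^{1/q - 1/2} \leq (8/3)^{1/2}$ valid for $q \geq 1$, the specific choice of $c$ makes this bound at most $c$ for all $n \in M$; for $n \notin M$ the estimate $\|(T-S)\delta_n\|_q = \|f_n\|_q < c$ holds trivially.

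The heart of the argument is the lower $\ell_1$-estimate. Splitting coefficients by sign and writing $u = \sum_{a_n > 0} a_n \tilde g_n$ and $v = \sum_{a_n < 0} |a_n| \tilde g_n$, positivity gives the identity
\[
\Big\|\sum a_n \tilde g_n\Big\|_1 = \|u-v\|_1 = \|u\|_1 + \|v\|_1 - 2\|u\wedge v\|_1 = \sum |a_n| - 2\|u\wedge v\|_1.
\]
The crucial new bound is $\|u \wedge v\|_1 \leq (\vr/c)^q \sum |a_n|$, which I derive from a nonlinear bridge to the $L_q$ structure of $T$. Indeed, the pointwise inequality $(\sum b_n^q f_n^q)^{1/q} \leq \sum b_n f_n$ (valid whenever $b_n, f_n \geq 0$ and $q \geq 1$) gives $u^{1/q} \leq Tx_P$ and $v^{1/q} \leq Tx_N$, where $x_P = \sum_{a_n > 0}(a_n/\|f_n\|_q^q)^{1/q}\delta_n$ and $x_N$ is defined analogously, with $x_P \perp x_N$ in $\ell_q$. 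Since $(a \wedge b)^{1/q} = a^{1/q} \wedge b^{1/q}$ for $a, b \geq 0$, this translates into $\|u\wedge v\|_1 = \|u^{1/q} \wedge v^{1/q}\|_q^q \leq \|Tx_P \wedge Tx_N\|_q^q$, and the $\vr$-DP property together with $\|x_P\|_q^q \leq c^{-q}\|u\|_1$ yields the claim.

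To turn $\sup_n \|(T-S)\delta_n\|_q \leq c$ into a bound on $\|T-S\|$, I will use that $T-S$ is itself positive and $\vr$-DP, as follows from $0 \leq S \leq T$ and $|(T-S)x| \leq T|x|$. Applying Corollary \ref{c:arb number} with $p = q$ to $T-S$ on a positive unit vector $x = \sum \alpha_n \delta_n \in \ell_q$ gives
\[
\|(T-S)x\|_q \leq 256\vr + \Big(\sum \alpha_n^q \|(T-S)\delta_n\|_q^q\Big)^{1/q} \leq 256\vr + \sup_n \|(T-S)\delta_n\|_q,
\]
producing $\|T-S\| \leq 2^8\vr + 2\sqrt{2\vr\|T\|/3}$. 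The step I expect to be the main obstacle is the bound on $\|u \wedge v\|_1$: locating the correct nonlinear link between the $L_1$-overlap of the $\tilde g_n$'s and an $L_q$-infimum of $T$-images is the only non-routine move, the rest being verifications that combine standard lattice identities with the inequalities already developed in Section \ref{s:ineq}.
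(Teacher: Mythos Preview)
Your proof is correct and follows essentially the same route as the paper. The paper packages the reduction by first showing that $T^\prime:\ell_1\to L_1(\mu)$, $\delta_i\mapsto f_i^q$, is $\vr^q$-DP and then invoking Theorem~\ref{t:ell1} as a black box, whereas you unfold that argument and apply Dor's theorem directly to the normalized functions $\tilde g_n=f_n^q/\|f_n\|_q^q$; your ``nonlinear bridge'' is precisely the computation the paper uses to establish that $T^\prime$ is $\vr^q$-DP. Likewise, your final step via Corollary~\ref{c:arb number} is the content of Lemma~\ref{l:norm} applied to $R=T-S$.
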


To deduce this theorem from Theorem \ref{t:ell1}, we need an auxiliary result.

\begin{lemma}\label{l:norm}
Suppose $1 \leq q \leq \infty$, $(\Omega, \mu)$ is a measure space,
and the positive operator $R : \ell_q \to L_q(\mu)$ satisfies:
\begin{enumerate}
\item
If $x, y \in \ball(\ell_q)_+$ are disjoint, then $\| Rx \wedge Ry \| \leq \vr_1$.
\item
$\sup_i \|R \delta_i\| \leq \vr_2$, where $(\delta_i)_{i=1}^\infty$ is the canonical
basis of $\ell_q$.
\end{enumerate}
Then $\|R\| \leq 2^8 \vr_1 + \vr_2$.
\end{lemma}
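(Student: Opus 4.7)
The plan is to use Corollary \ref{c:arb number} to split $Rx$ into two pieces: one controlled by $\vr_1$ (the ``sum minus max'' term) and one controlled by $\vr_2$ (the ``max'' term, thanks to the $L_q$ structure of the range).

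First I would reduce to positive $x \in \ell_q$. Since $R \geq 0$, we have $|Rx| \leq R|x|$, so $\|Rx\| \leq \|R|x|\|$, and it suffices to bound $\|Rx\|$ for $x = \sum_{i=1}^\infty a_i \delta_i$ with $a_i \geq 0$. By approximation, I can restrict to finite sums $x = \sum_{i=1}^n a_i \delta_i$. Hypothesis (1) together with Proposition \ref{p:positive} says that $R$ is $\vr_1$-DP, so Corollary \ref{c:arb number} applies.

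Next, I would apply Corollary \ref{c:arb number} with the disjoint family $x_i = a_i \delta_i$. Noting that $\sum x_i = x$, $|Rx_i| = a_i R\delta_i$, and $\sum_i |Rx_i| = Rx$, this yields
\begin{equation*}
\Big\| Rx - \bigvee_{i=1}^n a_i R\delta_i \Big\|_q \leq 256 \vr_1 \|x\|_q = 2^8 \vr_1 \|x\|_q.
\end{equation*}

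Then I would estimate the $L_q$ norm of the supremum. For positive $f_1, \ldots, f_n \in L_q(\mu)$ one has the pointwise inequality $(\bigvee_i f_i)^q \leq \sum_i f_i^q$ (when $q<\infty$; for $q=\infty$ the bound $\|\bigvee_i f_i\|_\infty = \max_i \|f_i\|_\infty$ is immediate), so
\begin{equation*}
\Big\| \bigvee_{i=1}^n a_i R\delta_i \Big\|_q^q \leq \sum_{i=1}^n a_i^q \|R\delta_i\|_q^q \leq \vr_2^q \sum_{i=1}^n a_i^q = \vr_2^q \|x\|_q^q,
\end{equation*}
giving $\|\bigvee_i a_i R\delta_i\|_q \leq \vr_2 \|x\|_q$. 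Combining with the previous display via the triangle inequality yields $\|Rx\|_q \leq (2^8 \vr_1 + \vr_2)\|x\|_q$, and taking the supremum over $x \in \ball(\ell_q)$ finishes the proof.

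There is no real obstacle here: the only nontrivial inputs are Corollary \ref{c:arb number} (already proved) and the elementary pointwise bound $\bigvee_i f_i^q \leq \sum_i f_i^q$ which turns the lattice supremum into an $\ell_q$-sum at the level of $L_q$-norms. The role of the $L_q$-range is precisely to make this last step work, so the proof is specific to $L_q(\mu)$ rather than a general Banach lattice.
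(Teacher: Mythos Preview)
Your proof is correct and follows essentially the same approach as the paper: split $\|Rx\|$ via the triangle inequality into the ``sum minus max'' term bounded by $2^8\vr_1$ and the ``max'' term bounded by $\vr_2$ using the pointwise inequality $(\vee_i f_i)^q \leq \sum_i f_i^q$. The only cosmetic difference is that the paper applies Corollary~\ref{c:max Lq} directly together with hypothesis~(1), whereas you first observe that $R$ is $\vr_1$-DP (via Proposition~\ref{p:positive}) and then invoke Corollary~\ref{c:arb number}; since the latter is just a packaged consequence of the former, the two arguments are the same.
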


\begin{proof}
Write $R \delta_i = f_i$, then $\sup_i \|f_i\| \leq \vr_2$. It suffices to show that
$\|\sum_{i=1}^n \alpha_i f_i\| \leq 2^8 \vr_1 + \vr_2$ whenever
$\alpha_1, \ldots, \alpha_n \geq 0$ satisfy $\sum_i \alpha_i^q \leq 1$.
By the triangle inequality,
\begin{equation}
\label{eq:tri Lq}
\|\sum_{i=1}^n \alpha_i f_i\| \leq
\|\sum_{i=1}^n \alpha_i f_i - \vee_{i=1}^n \alpha_i f_i\| + \|\vee_{i=1}^n \alpha_i f_i\| .
\end{equation}
However,
\begin{align*}
\big\|\vee_{i=1}^n \alpha_i f_i\big\|^q
&
\leq
\Big\| \big( \sum_{i=1}^n (\alpha_i f_i)^q \big)^{1/q} \Big\|^q
\\
&
=
\int \sum_{i=1}^n \alpha_i^q f_i(t)^q d \mu(t) \leq
\sup_{1\leq i\leq n} \|f_i\|^q \cdot \sum_{i=1}^n \alpha_i^q \leq \vr_2^q .
\end{align*}
Furthermore, by Corollary \ref{c:max Lq},
\begin{align*}
\|\sum_{i=1}^n \alpha_i f_i - \vee_{i=1}^n \alpha_i f_i\|
&
\leq
2^8 \expe_S \Big\| \big(\sum_{i \in S} \alpha_i f_i\big) \wedge
 \big(\sum_{i \in S^c} \alpha_i f_i\big) \Big\|
\\
&
=
2^8 \expe_S \Big\| R \big(\sum_{i \in S} \alpha_i \delta_i\big) \wedge
 R \big(\sum_{i \in S^c} \alpha_i \delta_i\big) \Big\| \leq 2^8 \vr_1
\end{align*}
(we average over all $S \subset \{1, \ldots, n\}$).
Plugging this into \eqref{eq:tri Lq}, we finish the proof.
\end{proof}

\begin{proof}[Proof of Theorem \ref{t:lq->Lq}]
By scaling, we can assume $\|T\| \leq 1$. We denote the canonical basis on $\ell_p$ by $(\delta_i^{[p]})_{i=1}^\infty$
(below, we consider $p = q$ and $p = 1$).
Let $f_i = T \delta_i^{[q]} \in L_q(\mu)$, and $g_i = f_i^q \in L_1$.
Define $T^\prime : \ell_1 \to L_1(\mu)$ by setting $T^\prime \delta_i^{[1]} = g_i$,
for every $i$. Clearly,
$$
\|T^\prime\| = \sup_i \|T \delta_i^{[1]}\|_1 = \sup_i \|g_i\|_1 =
\sup_i \|f_i\|_q^q = \sup_i \|T \delta_i^{[q]}\|_q^q \leq \|T\|^q \leq 1 .
$$
We show that $T^\prime$ is $\vr^q$-DP.
It suffices to prove that, for disjoint $x, y \in \ell_1$ with finite support,
we have $\| |T^\prime x| \wedge |T^\prime y| \|_1 \leq \vr^{q} \max\{\|x\|_1,\|y\|_1\}$.
Write $x = \sum_{i \in A} \alpha_i \delta_i^{[1]}$ and
$y = \sum_{i \in B} \beta_i \delta_i^{[1]} \in \ball(\ell_1)$,
where $A$ and $B$ are disjoint finite sets.
Define $\tilde{x} = \sum_{i \in A} |\alpha_i|^{1/q} \delta_i^{[q]},
\tilde{y} = \sum_{i \in B} |\beta_i|^{1/q} \delta_i^{[q]} \in \ell_q$.
Then
\begin{align*}
\Big\| |T^\prime x| \wedge |T^\prime y| \Big\|_1
&
\leq
\Big\| \big( \sum_{i \in A} |\alpha_i| g_i \big) \wedge
 \big( \sum_{i \in B} |\beta_i| g_i \big) \Big\|_1
\\
&
=
\int \big( \sum_{i \in A} |\alpha_i| g_i(t) \big) \wedge
 \big( \sum_{i \in B} |\beta_i| g_i(t) \big) d \mu(t) .
\end{align*}
However, it is easy to see that, for any positive $\gamma_1, \ldots, \gamma_m$,
we have $\sum_i \gamma_i \leq (\sum_i \gamma_i^{1/q})^q$, hence
\begin{align*}
\Big\| |T^\prime x| \wedge |T^\prime y| \Big\|_1
&
\leq
\int \Big( \big( \sum_{i \in A} |\alpha_i|^{1/q} f_i(t) \big) \wedge
 \big( \sum_{i \in B} |\beta_i|^{1/q} f_i(t) \big) \Big)^q d \mu(t)
\\
&
=
\big\| (T \tilde{x}) \wedge (T \tilde{y}) \big\|_q^q \leq
\vr^q \max\{\|\tilde{x}\|_q^{q}, \|\tilde{y}\|_q^{q}\} =
\vr^q  \max\{\|x\|_1,\|y\|_1\}.
\end{align*}
Use Theorem \ref{t:ell1} to find an operator $S^\prime : \ell_1 \to L_1(\mu)$ so that
$0 \leq S^\prime \leq T^\prime$, and $\|T^\prime - S^\prime\| \leq (8/3)^{1/2} \vr^{q/2}$.
Define $S : \ell_q \to L_q$ by setting
$$
S(\sum_i \alpha_i \delta_i^{[q]}) = \sum_i \alpha_i (S^\prime \delta_i^{[1]})^{1/q}.
$$
We clearly have $0 \leq S \leq T$, hence $S$ is a bounded operator.
It remains to estimate $\|T - S\|$ from above.

As $0 \leq T-S \leq T$, $T-S$ must be $\vr$-DP. Furthermore,
for any $i$,
$$
\|(T-S)\delta_i^{[q]}\|_q^q =
\|T\delta_i^{[q]} - S\delta_i^{[q]}\|_q^q =
\int \Big( (T\delta_i^{[q]})(t) - (S\delta_i^{[q]})(t) \Big)^q d \mu(t) .
$$
Note that, for $0 \leq \alpha \leq \beta$, we have
$(\beta - \alpha)^q \leq \beta^q - \alpha^q$.
Recall that $(T\delta_i^{[q]})(t) = f_i(t) = g_i(t)^{1/q} = (T^\prime \delta_i^{[1]})(t)^{1/q}$,
and $(S\delta_i^{[q]})(t) = (S^\prime \delta_i^{[1]})(t)^{1/q}$. Thus,
$$
\|(T-S)\delta_i^{[q]}\|_q^q \leq
\int \Big( (T^\prime\delta_i^{[1]})(t) - (S^\prime\delta_i^{[1]})(t) \Big) d \mu(t)
\leq \|T^\prime - S^\prime\| \leq \sqrt{\frac{8}{3}} \vr^{q/2} .
$$
Lemma \ref{l:norm} gives the desired estimate for $\|T - S\|$.
\end{proof}

\begin{remark}\label{r:almost isometries}
It is well-known that for $p\neq2$ every linear isometry
$T:L_p(\mu)\rightarrow L_p(\nu)$ is disjointness preserving (cf. \cite[p. 77]{Car}).
Along the same lines, it can be shown that for $p\neq 2$, there is a constant
$C_p$ such that every linear $\vr$-isometry $T:L_p(\mu)\rightarrow L_p(\nu)$ (that is, $(1+\vr)^{-1}\|x\|\leq\|Tx\|\leq(1+\vr)\|x\|$), is also $C_p\vr$-DP.
\end{remark}

\section{Positive operators from sequence spaces to $L_1$}\label{s:to L1}

Throughout this section, the Banach lattice structure on $E$ is assumed to be
given by its $1$-unconditional basis $(\delta_i)$.

Denote by $\sph(Z)$ the unit sphere of a normed space $Z$.
We define the set-valued \emph{duality mapping} $\dual$
by letting, for $x \in E \backslash \{0\}$,
$\dual(x) = \{f \in \sph(E^*) : f(x) = \|x\|\}$.
The map $\dual$ is said to be \emph{lower semicontinuous} if, for any
$x \in E \backslash \{0\}$, and any open set $U$ with $U \cap \dual(x) \neq \emptyset$,
there exists $\vr \in (0,\|x\|)$ so that $U \cap \dual(y) \neq \emptyset$
whenever $\|x-y\| < \vr$.

We call the space $E$ \emph{smooth} if $\dual(x)$ is a singleton for very $x$.
In this case, we can define $\ovdual : E \backslash \{0\} \to E^*$ so that
$\dual(x) = \{\ovdual(x)\}$ for every $x$. It is known (see \cite[Section 2.2]{Die75})
that $\ovdual$ is continuous (with respect to the norm topology) if and only if the norm
of $E$ is Fr\'echet differentiable away from $0$. Clearly, for smooth spaces
$\ovdual$ is continuous if and only if $\dual$ is lower semi-continuous.

\begin{theorem}\label{t:FD->l1}
Suppose the order in a reflexive Banach lattice $E$ is determined by its
$1$-unconditional basis, and the duality map is lower semi-continuous on $E \backslash \{0\}$.
Suppose, furthermore, that the operator $T \in B(E,\ell_1)_+$ is $\vr$-DP. Then there exists a disjointness preserving operator $S \in B(E,\ell_1)_+$ so that
$S \leq T$, and $\|T - S\| \leq 256 \vr$.
\end{theorem}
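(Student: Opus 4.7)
The plan is to reduce everything to choosing a good partition. After rescaling I assume $\|\delta_i\|_E = 1$ and set $f_i := T \delta_i \in (\ell_1)_+$. Since the order on $E$ is given by the $1$-unconditional basis, any positive disjointness preserving $S$ with $0 \leq S \leq T$ must act as $S \delta_i = f_i \one_{A_i}$ for some pairwise disjoint family $(A_i)_{i \in \NN}$ of subsets of $\NN$; by maximality we may take $(A_i)$ to be a partition of $\NN$. A direct coordinate computation then shows, for any $x = \sum_i x_i \delta_i \in E_+$,
\[
\|(T - S) x\|_1 = \langle c, x \rangle, \qquad c := \sum_i c_i \delta_i^* \in (E^*)_+, \quad c_i := \sum_{n \notin A_i} f_i(n),
\]
and by $1$-unconditionality $\|T - S\| = \|c\|_{E^*}$. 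So the task becomes choosing the partition so as to make $\|c\|_{E^*}$ small.

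The main tool is Corollary \ref{c:arb number}: for any $y = \sum_i y_i \delta_i \in \ball(E)_+$, applied to the disjoint family $(y_i \delta_i)_i$ it gives
\[
\rho(y) := \sum_{n} \Big( \sum_i y_i f_i(n) - \max_i y_i f_i(n) \Big) \leq 256 \vr.
\]
I call a partition $(A_i)$ \emph{compatible with $y$} when $n \in A_i$ forces $y_i f_i(n) = \max_j y_j f_j(n)$. For a compatible pair $(y, (A_i))$ one immediately has $\langle c, y \rangle = \rho(y) \leq 256 \vr$. Hence it suffices to exhibit a partition $(A_i)$ together with a norm-$1$ element $y^* \in \ball(E)_+$ such that (a) $(A_i)$ is compatible with $y^*$, and (b) $y^*$ attains $\|c\|_{E^*}$; then $\|T - S\| = \|c\|_{E^*} = \langle c, y^* \rangle \leq 256 \vr$.

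I plan to produce such a compatible pair by a fixed-point argument on $K := \{y \in \ball(E)_+ : \|y\| = 1\}$, which is weakly compact and convex by reflexivity. Let $\Gamma : K \to 2^K$ send $y$ to the set of all norm-$1$ norm-attaining points in $\ball(E)_+$ of all functionals $c$ arising from partitions compatible with $y$ (if $c = 0$, put $\Gamma(y) = K$). Reflexivity of $E$ guarantees existence of norm-attaining points, while the lower semi-continuity of the duality map $\dual$ ensures that this norm-attaining assignment is well-behaved under perturbations, which is precisely what is needed to conclude that $\Gamma$ has closed graph. A Kakutani-type fixed-point theorem then provides $y^* \in \Gamma(y^*)$, that is, a compatible pair of the required kind.

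The hard part will be dealing with ties in $\arg\max_j y_j f_j(n)$. At such $y$ the map $\Gamma$ is genuinely multi-valued, and to secure convex values and upper hemi-continuity one must pass to the closed convex hull of the compatible-partition data -- effectively, fractional assignments of each $n$ among the indices attaining the maximum. The work will then be to check that the bound $\rho(y) \leq 256 \vr$ survives this convexification and that the normalized norm-attaining assignment depends suitably on $y$; this is where the $1$-unconditional basis structure and the lower semi-continuity hypothesis on $\dual$ are the decisive ingredients.
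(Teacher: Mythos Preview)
Your reduction is exactly right and matches the paper: setting $f_i=T\delta_i$, any admissible $S$ is $S\delta_i=f_i\one_{A_i}$ for disjoint $(A_i)$, $\|T-S\|=\|c\|_{E^*}$ with $c_i=\|f_i\one_{A_i^c}\|_1$, and Corollary~\ref{c:arb number} gives $\langle c,y\rangle\le 256\vr$ whenever the partition is \emph{compatible} with $y$ in your sense. So the whole problem is indeed to produce a partition that is compatible with a vector that norms the resulting $c$.

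The gap is in the fixed-point step. Your set $K=\{y\in\ball(E)_+:\|y\|=1\}$ is neither convex nor weakly compact (in any infinite-dimensional reflexive space the weak closure of the unit sphere is the whole ball, and midpoints of positive unit vectors typically have norm $<1$), so Kakutani does not apply as stated. Passing to $\ball(E)_+$ does not obviously help: the ``norming element'' of a nonzero $c$ lives on the sphere, so values of $\Gamma$ land outside any genuinely convex compact domain you might choose, and you are back to the same obstruction. There is also a mismatch in the continuity hypothesis you invoke: what you need is good behavior of the map $c\mapsto\{\text{norm-attaining }y\}$ from $E^*$ to $E$, whereas the assumption is lower semi-continuity of the duality map $E\to E^*$.

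The paper sidesteps all of this by working on the \emph{partition} side rather than on the $y$ side. It convexifies the set of partitions into the compact convex set
\[
\mathcal H=\Big\{(h_i)\in\prod_i\ball(\ell_\infty)_+:\ \sum_i h_i\le\one\Big\}
\]
(product of weak$^*$ balls), on which $F((h_i))=\|\,\sum_i\|(\one-h_i)f_i\|_1\,\delta_i^*\,\|_{E^*}$ is convex and continuous; hence $F$ attains its minimum at an extreme point, i.e.\ at an honest partition $(A_i)$. One then picks any $y=\sum\alpha_i\delta_i$ norming the resulting $c$ and shows, by a first-order variation moving a single mass $\vr\one_{\{t\}}$ between two coordinates, that if compatibility failed at some $t$ the value of $F$ would strictly decrease --- this is precisely where lower semi-continuity of the duality map on $E$ is used (Lemma~\ref{l:support}), and in the right direction. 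So the ``fixed point'' you are after is obtained variationally: minimize over assignments first, then read off the compatible $y$ as a norming vector of the minimizer.
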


Let us begin with some auxiliary results. The first one is straightforward.

\begin{lemma}\label{l:norm into L_1}
If $E$ is a space with a $1$-unconditional basis $\delta_i$, then, for any
$T \in B(E,L_1(\mu))_+$,
$$
\|T\| = \Big\|\big( \|T \delta_i\| \big)_i \Big\|_{E^*}.
$$
\end{lemma}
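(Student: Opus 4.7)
The plan is to establish the identity via two inequalities, each a short calculation using positivity of $T$ together with the $1$-unconditionality of the basis $(\delta_i)$.

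For the upper bound $\|T\|\le\|(\|T\delta_i\|)_i\|_{E^*}$, I would take an arbitrary $x=\sum_i a_i\delta_i\in\ball(E)$ and invoke the general inequality $|Tx|\le T|x|$ valid for positive operators (see e.g.\ \cite[Lemma 3.22]{AA}). Since $(\delta_i)$ is $1$-unconditional, $|x|=\sum_i|a_i|\delta_i$ and $\||x|\|_E=\|x\|_E$; since $T\ge 0$, each $T\delta_i$ is non-negative in $L_1(\mu)$. Continuity of $T$ along the Schauder expansion of $|x|$, combined with monotone convergence in the integral, then yields
\[
\|Tx\|_1\le\bigl\|T|x|\bigr\|_1=\int\sum_i|a_i|\,T\delta_i\,d\mu=\sum_i|a_i|\,\|T\delta_i\|,
\]
and the right-hand side is bounded by $\|(\|T\delta_i\|)_i\|_{E^*}\cdot\|x\|_E$ by definition of the dual norm (using that $\|(|a_i|)\|_E=\|x\|_E$ by $1$-unconditionality).

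For the reverse inequality, I would verify that the sequence $(\|T\delta_i\|)_i$ defines a functional on $E$ of norm at most $\|T\|$. By $1$-unconditionality, the norm of the formal pairing $\phi\colon(a_i)\mapsto\sum_i a_i\|T\delta_i\|$ is attained on non-negative sequences in the unit ball of $E$. For any such $(a_i)$, the vector $x=\sum_i a_i\delta_i$ is positive, so $Tx\ge 0$ and the same monotone-convergence computation gives $\|Tx\|_1=\sum_i a_i\|T\delta_i\|=\phi((a_i))\le\|T\|\,\|x\|_E\le\|T\|$. Hence $\|\phi\|_{E^*}\le\|T\|$, and in particular $(\|T\delta_i\|)_i$ does represent an element of $E^*$.

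No substantial obstacle is anticipated; the statement is essentially a bookkeeping consequence of positivity once one is allowed to push $T$ and the $L_1$-integral past the basis expansion. The only routine points to check are the termwise application of $T$ (justified by continuity of $T$ and the Schauder basis property) and the termwise evaluation of the integral (justified by monotone convergence, since every summand in sight is non-negative).
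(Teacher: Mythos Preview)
Your proposal is correct and follows essentially the same approach as the paper: both arguments reduce to the identity $\|T(\sum_i \alpha_i \delta_i)\|_1 = \sum_i \alpha_i \|T\delta_i\|$ for non-negative coefficients, then read off the dual norm. The only cosmetic difference is that the paper works with finitely supported sequences $(\alpha_i)\in c_{00}$ (relying on density), whereas you pass to infinite sums and justify the interchange via monotone convergence; this is extra care rather than a different idea.
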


\begin{proof}
For the sake of brevity, set $f_i = T \delta_i$.
Suppose $(\alpha_i) \in c_{00}$ is a finite sequence of non-negative numbers,
then
$$
\big\| T \big(\sum_i \alpha_i \delta_i\big) \big\| = \int \big( \sum_i \alpha_i f_i \big) =
\sum_i \alpha_i \|f_i\| .
$$
Therefore,
\begin{align*}
\|T\|
&
=
\sup \Big\{ \big\| T \big(\sum_i \alpha_i \delta_i\big) \big\| :
 \big\| \sum_i \alpha_i \delta_i \big\| \leq 1 \Big\}
\\
&
=
 \sup \Big\{ \sum_i \alpha_i \|f_i\| :
 \big\| \sum_i \alpha_i \delta_i \big\| \leq 1 \Big\} =
 \big\| ( \|f_i\| ) \big\|_{E^*} .
\end{align*}
\end{proof}

The next lemma may be known to the experts in Banach space geometry.

\begin{lemma}\label{l:support}
Suppose $Z$ is a real Banach space whose duality mapping $\dual$ is lower
semi-continuous. Suppose, furthermore, that there exist
$z, z_1, z_2, \ldots \in Z$ so that $z \neq 0$, $\lim_n \|z - z_n\| = 0$,
and for each $n$ there exists $z^*_n \in \dual(z)$ so that
$$
\limsup_n \frac{\|z\| - \langle z^*_n, z_n \rangle}{\|z - z_n\|} > 0 .
$$
Then $\|z_n\| < \|z\|$ for some value of $n$.
\end{lemma}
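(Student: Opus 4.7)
Argue by contradiction: suppose $\|z_n\|\geq\|z\|$ for every $n$. Since the limsup hypothesis is preserved under subsequences, pass to a subsequence and relabel so that
\[
  \|z\|-\langle z_n^{*},z_n\rangle \;\geq\; c\,\|z-z_n\|,\qquad n=1,2,\dots,
\]
for a fixed $c>0$. The goal is to exhibit a single index $n$ with $\|z_n\|<\|z\|$, contradicting the standing assumption.

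The pivotal step is to select, for some sufficiently large $n$, a functional $w_n^{*}\in\dual(z_n)$ with $\|w_n^{*}-z_n^{*}\|<c$. Since $z_n^{*}\in\dual(z)$, the lower semi-continuity of $\dual$ at $z$ applied to the open neighbourhood $\{f\in Z^{*}:\|f-z_n^{*}\|<c/2\}$ of $z_n^{*}$ supplies a modulus $\vr_n>0$ such that $\|y-z\|<\vr_n$ implies $\dual(y)\cap\{f:\|f-z_n^{*}\|<c/2\}\neq\emptyset$. Because $\|z_n-z\|\to 0$, a diagonal/subsequence selection picking $n$ for which $\|z_n-z\|<\vr_n$ produces the desired $w_n^{*}\in\dual(z_n)$ lying within $c/2$ of $z_n^{*}$.

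Once $w_n^{*}$ is in hand, combining $\|w_n^{*}\|=1$, $\langle w_n^{*},z_n\rangle=\|z_n\|$, and $\langle w_n^{*},z\rangle\leq\|z\|$ yields
\[
  \|z\|-\|z_n\| \;\geq\; \langle w_n^{*},z-z_n\rangle
  \;=\; \langle z_n^{*},z-z_n\rangle + \langle w_n^{*}-z_n^{*},z-z_n\rangle
  \;\geq\; \bigl(c-\|w_n^{*}-z_n^{*}\|\bigr)\,\|z-z_n\|>0,
\]
the required contradiction. The main obstacle is precisely the selection of $w_n^{*}$: the LSC modulus $\vr_n$ depends on $n$ through the varying choice of $z_n^{*}\in\dual(z)$, with no a priori uniform lower bound, so arranging $\|z_n-z\|<\vr_n$ for some $n$ is the delicate matching between the rate at which $z_n\to z$ and the sequence of moduli at $(z_n^{*},c/2)$. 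Every other part of the argument is a short chain of inequalities once this selection is carried out.
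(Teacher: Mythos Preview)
Your plan matches the paper's proof almost step for step: the paper also normalises, passes to a subsequence with $\langle z_n^*, z_n\rangle < \|z\| - c\|z-z_n\|$, invokes lower semi-continuity to pick $\tilde z_n^*\in\dual(z_n)$ close to $z_n^*$, and then finishes with the same chain of inequalities (writing $\|z_n\|=\langle\tilde z_n^*,z_n\rangle$, using $\langle\tilde z_n^*,z\rangle\le\|z\|=\langle z_n^*,z\rangle$, and absorbing the cross term $\langle\tilde z_n^*-z_n^*,z_n-z\rangle$ as $o(\|z-z_n\|)$).

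The step you single out as ``the main obstacle'' is exactly the step the paper does not argue in detail: it simply asserts ``by the lower semi-continuity of the duality map, we can find a sequence $\tilde z_n^*\in\dual(z_n)$ so that $\lim_n\|z_n^*-\tilde z_n^*\|=0$'' and moves on. Your concern is well founded in the abstract: lower semi-continuity of a set-valued map $F$ at $z$ gives $d(y_0,F(z_n))\to 0$ for each \emph{fixed} $y_0\in F(z)$, but not automatically $d(y_n,F(z_n))\to 0$ for a \emph{varying} sequence $y_n\in F(z)$ --- that would amount to the excess $e(F(z),F(z_n))\to 0$, which generally requires norm compactness of $F(z)$. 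So you have correctly located the one nontrivial point, and the paper treats it no more carefully than you do. It is worth noting, however, that in the sole application of this lemma (inside the proof of Theorem~\ref{t:FD->l1}) the functionals $z_n^*$ are all equal to a single fixed element of $\dual(z)$; in that situation the standard sequential characterisation of lower semi-continuity applies directly and the difficulty you describe disappears.
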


\begin{proof}
By rescaling, we can assume that $\|z\| = 1$. Furthermore, by passing to a subsequence,
we can assume that, for every $n$,
$$
\langle z_n^*, z_n \rangle < 1 - c\|z - z_n\|,
$$
where $c > 0$ is a constant.
By the lower semi-continuity of the duality map, we can find
a sequence $\tilde{z}_n^* \in \dual(z_n)$ so that
$\lim_n \|z^*_n - \tilde{z}_n^*\| = 0$. We then have
\begin{equation}
\label{eq:norming}
\|z_n\| = \langle \tilde{z}_n^* , z_n \rangle =
\langle \tilde{z}_n^* , z \rangle - \langle z_n^* , z \rangle +
\langle \tilde{z}_n^* - z_n^* , z_n - z \rangle +
\langle z_n^*, z_n \rangle .
\end{equation}
As $z_n^* \in \dual(z)$, and $\|\tilde{z}_n^*\| = 1$, we have
$\langle \tilde{z}_n^* , z \rangle - \langle z_n^* , z \rangle \leq 0$.
Furthermore, $\langle z_n^*, z_n \rangle \leq 1 - c\|z - z_n\|$, and
$$
\langle \tilde{z}_n^* - z_n^* , z_n - z \rangle \leq
\|\tilde{z}_n^* - z_n^*\| \|z_n - z\| = o \Big(\|z - z_n\| \Big) .
$$
Now \eqref{eq:norming} shows that $\|z_n\| \leq 1 - c\|z - z_n\| + o(\|z - z_n\|)$.
\end{proof}

\begin{proof}[Proof of Theorem \ref{t:FD->l1}]
We can and do assume that the basis $(\delta_i)$ is normalized. Let $f_i=T\delta_i$.
By Corollary \ref{c:arb number}, for every sequence $(\alpha_i) \in c_{00}$
we have
$$
\|\sum_i \alpha_i f_i - \vee_i \alpha_i f_i\| \leq
 256 \vr \|\sum_i \alpha_i \delta_i\|.
$$

We will find mutually disjoint sets $A_i \subset \NN$ with the property that
\begin{equation}
\label{eq:remainder}
\big\| \sum_i \|\one_{A_i^c} f_i\| \delta_i^*\| \leq 256 \vr .
\end{equation}
Once this is done, we define $S : E \to \ell_1 : \delta_i \mapsto \one_{A_i} f_i$.
Then clearly $0 \leq S \leq T$, and by Lemma \ref{l:norm into L_1},
$$
\|T - S\| = \big\| \sum_i \|f_i -  \one_{A_i} f_i\| \delta_i^*\| =
 \big\| \sum_i \|\one_{A_i^c} f_i\| \delta_i^*\| \leq 256 \vr .
$$

For the purpose of finding $(A_i)$, we use some ideas of \cite{Dor}.
Consider the space
$$
{\mathcal{H}} = \big\{ (h_1, h_2, \ldots) \in \prod_i \ball(\ell_\infty)_+ :
 \sum_i h_i \leq \one \big\} .
$$
Here, $\prod_i \ball(\ell_\infty)_+$ is equipped with the topology
of the product of infinitely many copies of $(\ell_\infty, w^*)$.
It is easy to see that ${\mathcal{H}}$ is compact. Now define
$$
F : {\mathcal{H}} \to \RR : (h_i)_{i \in \NN} \mapsto
 \Big\| \sum_i \|(\one - h_i) f_i\| \delta_i^*\Big\| .
$$
Note that the function $F$ is convex. Indeed, suppose
$h_i = t h_i^{(0)} + (1-t) h_i^{(1)}$ for every $i$. For convenience, set
$\phi_i = f_i (\one - h_i)$, and $\phi_i^{(j)} = f_i (\one - h_i^{(j)})$
for $j = 0,1$. Then $\phi_i = t \phi_i^{(0)} + (1-t) \phi_i^{(1)}$,
and as all the functions are non-negative,
$\|\phi_i\| = t \|\phi_i^{(0)}\| + (1-t) \|\phi_i^{(1)}\|$.
\begin{align*}
F((h_i)_i)
&
=
\big\| \sum_i \|\phi_i\| \delta_i^*\| =
\Big\| \sum_i \big( t \|\phi_i^{(0)}\| + (1-t) \|\phi_i^{(1)}\| \big) \delta_i^* \Big\|
\\
&
\leq
t \big\| \sum_i \|\phi_i^{(0)}\| \delta_i^* \big\| +
 (1-t) \big\| \sum_i \|\phi_i^{(1)}\| \delta_i^* \big\|
\\
&
=
t F((h_i^{(0)})_i) + (1-t) F((h_i^{(0)})_i) .
\end{align*}
Moreover, $F$ is continuous. Indeed, fix $\vr' > 0$ and $(h_i) \in {\mathcal{H}}$.
Find $N$ so that $\|\sum_{i=N+1}^\infty \|f_i\| \delta_i^*\| < \vr'/2$.
Then $|F((h_i)) - F(h_i^\prime))| < \vr'$ whenever, for $1 \leq i \leq N$,
$$
\Big| \|(1-h_i)f_i\| -  \|(1-h_i^\prime)f_i\| \Big| =
\Big\|  (h_i^\prime - h_i) f_i \Big\| =
\big| \langle h_i - h_i^\prime , f_i \rangle \big| < \frac{\vr'}{2N}
$$
($\langle \cdot , \cdot \rangle$ denotes the duality bracket between
$\ell_\infty$ and $\ell_1$).
The centered equation above clearly defines a relatively open subset of ${\mathcal{H}}$.

By the above, for any $n \in \NN$ there exists an extreme point
$(h_i^{(n)})_i \in {\mathcal{H}}$ so that $F((h_i^{(n)})_i) < \inf F + 1/n$.
As noted in \cite{Dor}, $(h_i)$ is an extreme point of ${\mathcal{H}}$ if and only if there
exist disjoint sets $A_i$ so that $h_i = \one_{A_i}$, for every $i$.
Moreover, the set of the extreme points of ${\mathcal{H}}$ is closed.
Indeed, one can observe that ${\mathcal{H}}$ is metrizable.
Suppose $((h_i^{(n)})_{i \in \NN})_{n \in \NN}$ is a sequence of extreme points,
converging to some $(h_i)_{i \in \NN} \in {\mathcal{H}}$.
Write $h_i^{(n)} = \one_{A_i^{(n)}}$. Then for any $i$,
$h_i^{(n)} \underset{n}\rightarrow h_i$ pointwise, hence $h_i = \one_{A_i}$.
Moreover, for each $i, t \in \NN$, only two situations are possible:
\begin{enumerate}
\item
For $n$ large enough, $t \in A_i^{(n)}$ (that is, $h_i^{(n)}(t) = 1$),
and consequently, $t \in A_i$.
\item
For $n$ large enough, $t \notin A_i^{(n)}$, and then, $t \notin A_i$.
\end{enumerate}
This shows that the sets $(A_i)$ are disjoint.

We therefore conclude that $F$ attains its minimum on an extreme point
$(\one_{A_i})$. By enlarging the sets $A_i$ if necessary, we can assume
that $\cup_i A_i = \NN$. It remains to show that these sets satisfy
\eqref{eq:remainder}.

For the sake of brevity write $\beta_i = \|\one_{A_i^c} f_i\|$, and
$x = \sum_i \beta_i \delta_i^*$. Find
$z = \sum_i \alpha_i \delta_i \in \sph(E)_+$
so that $\sum_i \alpha_i \beta_i = \big\| \sum_i \beta_i \delta_i^*\|$.
We show that, for any $t \in A_i$, $\alpha_i f_i(t) = \vee_j \alpha_j f_j(t)$.
Indeed, suppose, for the sake of contradiction, that there exist $t \in A_i$,
and $j \neq i$, so that $\alpha_i f_i(t) < \alpha_j f_j(t)$.
For $k \in \NN$, let $h_k = \one_{A_k}$. Furthermore, for any
$\vr \in (0, (\alpha_j f_j(t) - \alpha_i f_i(t)))/2$, define
$h_k^{(\vr)}$ by setting $h_k^{(\vr)} = h_k$ for $k \notin \{i,j\}$,
$h_i^{(\vr)} = h_i - \vr \one_{\{t\}}$, and $h_j^{(\vr)} = h_j + \vr \one_{\{t\}}$
Let $\beta_k^{(\vr)} = \|(\one - h_k^{(\vr)}) f_k\|$, then
$\beta_k^{(\vr)} = \beta_k$ for $k \notin \{i,j\}$,
$\beta_i^{(\vr)} = \beta_i + \vr f_i(t)$, and $\beta_j^{(\vr)} = \beta_j - \vr f_j(t)$.
Write $x = \sum_k \beta_k \delta_k^*$, and $x^{(\vr)} = \sum_k \beta_k^{(\vr)} \delta_k^*$.
Then
$$
\big\| x - x^{(\vr)} \big\| = \big\| \vr f_i(t) \delta_i - \vr f_j(t) \delta_j \big\|
\leq \big( |f_i(t)| + |f_j(t)| \big) \vr .
$$
Moreover,
\begin{align*}
\langle z , x^{(\vr)} \rangle
&
=
\sum_k \alpha_k \beta_k^{(\vr)} =
\sum_k \alpha_k \beta_k + \vr \big(\alpha_i f_i(t) - \alpha_j f_j(t)\big)
\\
&
=
1 - \vr \big(\alpha_j f_j(t) - \alpha_i f_i(t)\big) .
\end{align*}
An application of Lemma \ref{l:support} shows that, for some $\vr$,
$$
F\big( (h_i^{(\vr)})_i \big) = \|x^{(\vr)}\| < \|x\| = F\big( (h_i)_i \big) ,
$$
contradicting our assumption that $F$ attains its minimum at $(h_i)$.

For $N \in \NN$, let $B_N = \cup_{k=1}^N A_k$ and
$\phi_N = \sum_{i=1}^N \alpha_i \one_{A_i^c}f_i$. By the above,
$\phi_N(t) = \sum_{i=1}^N \alpha_i f_i(t) - \vee_i \alpha_i f_i(t)$ for $t \in B_N$.
Consequently,
$$
\Big\| \phi_N \one_{B_N} \Big\| \leq
\Big\| \sum_{i=1}^N \alpha_i f_i - \vee_i \alpha_i f_i \Big\| \leq 256 \vr .
$$
Now consider a finite set $B \subset \NN$. Then $B \subset B_N$ for $N$
large enough, hence
$$
\Big\| \Big( \sum_{i=1}^N \alpha_i \one_{A_i^c}f_i \Big) \one_B \Big\| \leq 256 \vr
$$
for every $N$. By the Fatou Property of $\ell_1$,
$$
\Big\| \Big( \sum_{i=1}^\infty \alpha_i \one_{A_i^c}f_i \Big) \one_B \Big\| \leq 256 \vr ,
$$
and as $B$ can be arbitrarily large,
$\big\| \sum_i \alpha_i \one_{A_i^c}f_i \big\| \leq 256 \vr$. Now, since
$$
\big\| \sum_i \|\one_{A_i^c} f_i\| \delta_i^*\| =\sum_i\alpha_i \|\one_{A_i^c} f_i\| =\big\| \sum_i \alpha_i \one_{A_i^c}f_i \big\| \leq 256 \vr
$$
we get \eqref{eq:remainder} as claimed.
\end{proof}

\begin{theorem}\label{t:seq->l1}
Suppose the order in a reflexive Banach lattice $E$ is determined
by its $1$-unconditional basis, and the operator $T \in B(E,\ell_1)_+$ is $\vr$-DP. Then for every $c > 1$ there exists a disjointness preserving operator $S \in B(E,\ell_1)_+$ so that $S \leq T$, and $\|T - S\| \leq 256 c \vr$.
\end{theorem}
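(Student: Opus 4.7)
The approach is a renorming argument: we reduce to Theorem \ref{t:FD->l1} by passing to an equivalent lattice norm on $E$ whose duality map is lower semi-continuous. Given $c > 1$, the plan is to construct an equivalent lattice norm $\|\cdot\|_*$ on $E$ satisfying:
(a) $\|x\| \leq \|x\|_* \leq c\|x\|$ for every $x \in E$;
(b) $(\delta_i)$ remains a $1$-unconditional basis of $(E, \|\cdot\|_*)$ (equivalently, $\|\cdot\|_*$ is still a lattice norm); and
(c) the duality map of $(E, \|\cdot\|_*)$ is lower semi-continuous on $E \setminus \{0\}$.

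Granted this renorming, the rest is essentially a change-of-norm computation. First, $T : (E, \|\cdot\|_*) \to \ell_1$ is still $\vr$-\DP: for disjoint $x, y \in E$,
$$\bigl\||Tx| \wedge |Ty|\bigr\| \leq \vr \max\{\|x\|, \|y\|\} \leq \vr \max\{\|x\|_*, \|y\|_*\}$$
by (a). The renormed space $(E, \|\cdot\|_*)$ inherits reflexivity and has $(\delta_i)$ as a $1$-unconditional basis by (b), and now satisfies the semi-continuity hypothesis by (c), so Theorem \ref{t:FD->l1} produces a disjointness preserving $S \in B(E, \ell_1)_+$ with $0 \leq S \leq T$ and $\|T - S\|_{*,\mathrm{op}} \leq 256\vr$, where $\|\cdot\|_{*,\mathrm{op}}$ denotes the operator norm with domain $(E, \|\cdot\|_*)$. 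Since $\|x\| \leq 1$ implies $\|x\|_* \leq c$, this yields $\|T - S\| \leq c \cdot \|T - S\|_{*,\mathrm{op}} \leq 256 c\vr$, as desired. Note also that the $S$ produced is automatically disjointness preserving and satisfies $0 \leq S \leq T$ as maps, regardless of which norm one uses on the domain, so these properties transfer back without any loss.

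The main obstacle is producing a renorming satisfying (a)--(c) simultaneously. The tension is between keeping the distortion arbitrarily close to $1$, preserving the lattice structure, and extracting enough regularity of the duality map. For reflexive Banach lattices with a $1$-unconditional basis, this can be achieved via standard smoothing: one begins with an equivalent Fr\'echet-differentiable norm on $E$ (available since $E$ is reflexive with a basis, by classical renorming theorems), symmetrizes it over the sign changes $\delta_i \mapsto \pm \delta_i$ to enforce $1$-unconditionality while maintaining Fr\'echet differentiability, and then interpolates between this smoothed lattice norm and $\|\cdot\|$ so as to bring the distortion below any prescribed $c$. Fr\'echet differentiability implies that the duality selection is single-valued and norm-continuous, hence a fortiori lower semi-continuous. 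The factor $c > 1$ appearing in the conclusion is precisely the distortion inherent in such a smoothing.
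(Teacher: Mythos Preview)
Your overall strategy---renorm $E$ so that the duality map becomes lower semi-continuous, then invoke Theorem \ref{t:FD->l1} and transfer back---is exactly the paper's approach. The direction of the norm comparison you choose, $\|x\| \leq \|x\|_* \leq c\|x\|$, is the reverse of the paper's $\|x\|_0 \leq \|x\| \leq c\|x\|_0$, but both are equivalent: in your version the factor $c$ enters when comparing operator norms, while in the paper's it enters because $T$ becomes $c\vr$-DP on the larger unit ball. Either way the conclusion is $\|T-S\| \leq 256c\vr$.

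The gap is in your renorming sketch. The step ``interpolate between this smoothed lattice norm and $\|\cdot\|$ so as to bring the distortion below any prescribed $c$'' does not work as stated: combining (e.g.\ by an $\ell_2$-sum) a Fr\'echet-smooth norm with the original, possibly non-smooth, norm will in general destroy Fr\'echet differentiability. For a concrete instance, on $\RR^2$ the norm $(\|x\|_1^2 + \|x\|_2^2)^{1/2}$ fails to be differentiable at $(1,0)$. Likewise, your symmetrization step is delicate: taking a supremum over sign changes need not preserve smoothness. The paper avoids these issues via Lemma \ref{l:FD renorm}, which follows Godefroy's construction: one perturbs the \emph{dual} norm by small weighted $\ell_2$-sums of the coordinate functionals and of tail norms, producing a dual norm that is LUR (hence the predual norm is Fr\'echet differentiable), while keeping $1$-unconditionality and distortion below $c$ by choosing the weights small. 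So your plan is right, but you should either cite a renorming result that actually delivers (a)--(c) simultaneously with arbitrarily small distortion, or reproduce the construction in Lemma \ref{l:FD renorm}.
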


For the proof we need a renorming result similar to \cite[Proposition 1.4]{God}.
Recall that a Banach space $Z$ is called \emph{locally uniformly rotund}
(\emph{LUR} for short) if, for any $z, z_1, z_2, \ldots \in Z$,
$\lim \|z_n - z\| = 0$ whenever
$\lim_n \big(2(\|z\|^2 + \|z_n\|^2) - \|z+z_n\|^2\big) = 0$.
We say that that a basis in a Banach space $Z$ is \emph{shrinking}
if its biorthogonal functionals form a basis of the dual space $Z^*$. For unconditional bases this condition holds precisely when the space contains no subspace isomorphic to $\ell_1$ (\cite[Theorem 1.c.9]{LT1}.)

\begin{lemma}\label{l:FD renorm}
Suppose $(E, \| \cdot\|)$ is a space with a shrinking $1$-unconditional basis
$(\delta_i)$. Then for every $c > 1$, $E$ admits an equivalent norm
$\| \cdot \|_0$ such that:
\begin{enumerate}
\item
For any $x \in E$, $\|x\|_0 \leq \|x\| \leq c \|x\|_0$.
\item
$(E,\| \cdot \|_0)^*$ is LUR.
\item
The basis $(\delta_i)$ is $1$-unconditional in $(E,\| \cdot \|_0)$.
\end{enumerate}
\end{lemma}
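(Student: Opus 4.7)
The plan is to construct the norm $\|\cdot\|_0$ by duality: build an equivalent LUR norm on $E^*$ that preserves $1$-unconditionality of the biorthogonal basis and remains a dual norm, then take $\|\cdot\|_0$ to be its predual. Since $(\delta_i)$ is shrinking and $1$-unconditional, the biorthogonal sequence $(\delta_i^*)$ is a $1$-unconditional basis of $E^*$; let $Q_n:E^*\to E^*$ denote the adjoint of the $n$-th basis projection $P_n$ on $E$, so the $(Q_n)$ are $w^*$-continuous basis projections of $E^*$.

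Following the scheme of \cite[Proposition~1.4]{God}, I would introduce an equivalent norm on $E^*$ of the form
$$
N(f)^2 = \|f\|^2 + \sum_{n=1}^\infty a_n\|Q_nf\|^2 + \sum_{n=1}^\infty b_n\|(I-Q_n)f\|^2,
$$
with positive weights $(a_n),(b_n)$ decaying geometrically and chosen so that $\sum_n(a_n+b_n)\le c^2-1$. This immediately yields $\|f\|\le N(f)\le c\|f\|$, so passing to the predual on $E$ gives the distortion estimate (1). Each $Q_n$ and $I-Q_n$ is $w^*$-continuous, hence each summand of $N^2$ is $w^*$-lower semicontinuous, which makes $N$ a dual norm. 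Moreover, each summand depends only on the moduli of the coordinates of $f$ in the $1$-unconditional basis $(\delta_i^*)$, so $N$ is invariant under coordinate sign changes; by duality the basis $(\delta_i)$ is $1$-unconditional in the predual norm $\|\cdot\|_0$ on $E$, giving item (3).

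The delicate point is item (2), namely that $N$ is LUR. The plan is to assume $N(f)=1$, $N(f_k)\to 1$, and $N(f+f_k)\to 2$, and to apply the parallelogram-type estimate $2(A^2+B^2)-(A+B)^2\ge (A-B)^2$ termwise to each summand of $N^2$. This extracts $\|Q_nf_k\|\to\|Q_nf\|$, $\|(I-Q_n)f_k\|\to\|(I-Q_n)f\|$, and $\|f_k\|\to\|f\|$ as $k\to\infty$, for every fixed $n$. Because the range of each $Q_n$ is finite-dimensional, a weak-compactness argument upgrades this norm convergence to $\|Q_nf_k-Q_nf\|\to 0$. Since the basis is shrinking, $\|(I-Q_n)f\|\to 0$ as $n\to\infty$, and combined with the uniform tail control supplied by the $b_n$-terms this yields $N(f_k-f)\to 0$.

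I expect the main obstacle to be the last step: combining finite-dimensional norm convergence on each range of $Q_n$ with an estimate on the tails $(I-Q_n)f_k$ that is uniform in $k$. This is precisely what the geometric decay of $(a_n)$ and $(b_n)$ is designed to provide, and executing it carefully along the lines of Godefroy's argument completes the verification of LUR; the remaining parts of (1) and (3) are routine once $N$ is in place.
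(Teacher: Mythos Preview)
Your outline follows the right template, but the norm $N$ you write down is \emph{not} LUR in general, and the gap is exactly at the sentence ``a weak-compactness argument upgrades this norm convergence to $\|Q_nf_k-Q_nf\|\to0$.'' From the LUR hypothesis on $N$ you can indeed extract, for each fixed $n$, that $\|Q_nf_k\|\to\|Q_nf\|$ and $\|Q_n(f+f_k)\|\to 2\|Q_nf\|$. In the finite-dimensional space $\ran Q_n$ this forces $Q_nf_k\to Q_nf$ \emph{only when the restriction of $\|\cdot\|$ to $\ran Q_n$ is strictly convex}; nothing in your construction guarantees that, and no amount of weak or weak$^*$ compactness supplies it.

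Here is a concrete failure. Take $E=\ell_1^3\oplus_\infty c_0$, so that $E^*=\ell_\infty^3\oplus_1\ell_1$ and the obvious basis is shrinking and $1$-unconditional. Let
\[
f=\Big(1,\tfrac12,\tfrac12,0,0,\dots\Big),\qquad f_k=g:=\Big(1,\tfrac14,\tfrac12,0,0,\dots\Big)\ \ (\text{all }k).
\]
One checks directly that $\|Q_nf\|=\|Q_ng\|$ and $\|(I-Q_n)f\|=\|(I-Q_n)g\|$ for every $n$, and that in every case $\phi(f+g)=\phi(f)+\phi(g)$ for $\phi\in\{\|\cdot\|,\|Q_n\cdot\|,\|(I-Q_n)\cdot\|\}$. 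Consequently $2N(f)^2+2N(f_k)^2-N(f+f_k)^2=0$ for all $k$, yet $f_k\not\to f$; so $N$ is not LUR.

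The paper's two-step construction is designed precisely to close this hole. It first passes to
\[
\|f\|_1^{*2}=\|f\|^{*2}+\sum_i\varepsilon_i|f_i|^2,
\]
whose Hilbertian piece satisfies the parallelogram identity; thus the LUR hypothesis already forces $\sum_i\varepsilon_i|f_{k,i}-f_i|^2\to0$, i.e.\ coordinatewise convergence $f_{k,i}\to f_i$, which is exactly what your ``weak-compactness'' step was meant to deliver. Only then do the tail terms $\|(I-Q_n)\cdot\|_1^*$ (now taken in the strictly convex norm $\|\cdot\|_1^*$) combine with $\|(I-Q_n)f\|_1^*\to0$ to give $f_k\to f$. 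If you add such a weighted $\ell_2$ term to your $N$, your outline goes through; without it, it does not.
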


\begin{proof}[Sketch of the proof]
We follow the reasoning of \cite[Proposition 1.4]{God}.
The minor changes that are required are indicated below.
As before, we assume that the basis $(\delta_i)$ is normalized,
and denote the cooresponding biorthogonal functionals by $\delta_i^*$.
To distinguish between the (originally given) norms on $E$ and $E^*$,
we denote them by $\| \cdot \|$ and $\| \cdot \|^*$, respectively.

Find $1 = \vr_0 > \vr_1 > \vr_2 > \ldots > 0$ so that
$\sum_{i=0}^\infty \vr_i < c$. For $f = \sum_i f_i \delta_i^* \in E^*$, set
$$
\|f\|^*_1 = \big( \|f\|^{*2} + \sum_i \vr_i |f_i|^2 \big)^{1/2} .
$$
Then $(E^*, \| \cdot \|_1^*)$ is smooth, and for any $f$,
$\|f\|^* \leq \|f\|^*_1  \leq \sqrt c \|f\|^*$. Moreover, $\| \cdot \|^*_1$
is a dual norm, and we can define the predual norm $\| \cdot \|_1$ on $E$.
Finally, the basis $(\delta_i^*)$ is $1$-unconditional in $(E^*, \| \cdot \|^*_1)$,
hence $(\delta_i)$ is $1$-unconditional in $E, \| \cdot \|_1)$.

Now set
$$
\|f\|^*_0 = \Big( \sum_{i=0}^\infty
 \vr_i \big\| \sum_{k=i+1}^\infty f_k \delta_k^* \big\|_1^{*2} \Big)^{1/2} .
$$
This is a dual LUR norm, and $\|f\|_1^* \leq \|f\|^*_0  \leq \sqrt c \|f\|^*_1$.
Finally, the $1$-unconditionality is once again preserved.
\end{proof}

\begin{proof}[Proof of Theorem \ref{t:seq->l1}]
By Lemma \ref{l:FD renorm}, we can equip $E$ with an equivalent norm $\| \cdot \|_0$,
with the properties that $\| \cdot \|_0 \leq \| \cdot \| \leq c \| \cdot \|_0$,
the basis $(\delta_i)_{i=1}^\infty$ is $1$-unconditional,
and $(E, \| \cdot \|_0)^*$ is LUR. By \cite[Corollary 1.16]{God},
$\| \cdot \|_0$ is Fr\'echet differentiable on $E \backslash \{0\}$.

Now consider $T$ as a map from $(E, \| \cdot \|_0)$ into $\ell_1$.
As $\ball(E, \| \cdot \|_0) \subset c \ball(E)$, we conclude that
$T$ is $c \vr$-DP with respect to $\|\cdot \|_0$.
By Theorem \ref{t:FD->l1}, we can find a disjointness preserving mapping
$S : (E, \| \cdot \|_0) \to \ell_1$ so that $0 \leq S \leq T$, and
$\|T - S\| \leq 256 c \vr$. To finish the proof, recall that
$\| \cdot \|_0 \leq \| \cdot \|$.
\end{proof}

In the case of operators with values in $L_1(\Omega,\mu)$
(for an arbitrary measure space $(\Omega,\mu)$), we obtain:

\begin{theorem}\label{t:seq->L1}
Suppose the order in a Banach lattice $E$ is determined
by its $1$-unconditional shrinking
basis, and the operator $T \in B(E,L_1(\Omega,\mu))_+$ is $\vr$-DP. Then for every $\sigma > 0$ there exists a disjointness preserving finite rank operator $S \in B(E,L_1(\Omega,\mu))_+$ so that $\|T - S\| \leq 256 \vr + \sigma$.
\end{theorem}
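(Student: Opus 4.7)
The plan is to truncate $T$ via the canonical basis projections, use the shrinking hypothesis to push the tail error below $\sigma$, and then apply an $L_1(\Omega,\mu)$-target analogue of Theorem \ref{t:FD->l1} to each finite-dimensional truncation (which is automatically reflexive).

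Set $f_i = T\delta_i$, let $P_n : E \to E_n := \spn(\delta_1, \ldots, \delta_n)$ be the canonical band projection, and put $T_n = T P_n$. Each $T_n$ is positive, of finite rank, and still $\vr$-DP, since disjoint $x,y \in E$ are sent by $P_n$ to disjoint vectors of no larger norm (by $1$-unconditionality) and then by $T$. Applying Lemma \ref{l:norm into L_1} to the positive operator $T - T_n = T\circ(I - P_n)$ gives
$$
\|T - T_n\| = \Big\|\sum_{i > n}\|f_i\|_1\,\delta_i^*\Big\|_{E^*}.
$$
Because the basis $(\delta_i)$ is shrinking, $(\delta_i^*)$ is a Schauder basis of $E^*$; the element $\sum_{i\geq 1}\|f_i\|_1\delta_i^* \in E^*$ (which represents $\|T\|$ by Lemma \ref{l:norm into L_1}) therefore coincides with the norm-limit of its partial sums, forcing $\|T - T_n\| \to 0$. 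Given $\sigma > 0$, I fix $n$ so that $\|T - T_n\| \leq \sigma/2$.

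The main technical step will be an $L_1(\mu)$-target version of Theorem \ref{t:FD->l1} in finite dimensions: if $F$ is a finite-dimensional Banach lattice whose normalized $1$-unconditional basis $\delta_1,\ldots,\delta_N$ yields a Fr\'echet differentiable norm on $F\setminus\{0\}$, and $V : F\to L_1(\mu)$ is positive and $\vr$-DP, then there are pairwise disjoint measurable $A_1,\ldots,A_N\subset\Omega$ for which $W\delta_i := \mathbf{1}_{A_i}V\delta_i$ defines a disjointness preserving $W : F\to L_1(\mu)$ with $0\leq W\leq V$ and $\|V - W\|\leq 256\vr$. The proof will mirror that of Theorem \ref{t:FD->l1}: replace the discrete set $\mathcal{H}$ by
$$
\mathcal{H}_\Omega = \Big\{(h_i)_{i=1}^N \in \prod_{i=1}^N \ball(L_\infty(\mu))_+ : \sum_i h_i \leq \mathbf{1}_\Omega\Big\},
$$
which is compact in the product of weak-$*$ topologies, and minimize the convex continuous functional $\Phi((h_i)) = \|\sum_i\|(\mathbf{1} - h_i)V\delta_i\|_1\,\delta_i^*\|_{F^*}$ over it. The extreme points of $\mathcal{H}_\Omega$ are exactly the tuples $(\mathbf{1}_{A_i})$ with $(A_i)$ pairwise disjoint and measurable (any $h_i$ taking values strictly between $0$ and $1$ on a set of positive measure, or any overlap between $h_i$ and $h_j$, can be split by a small $L_\infty$-perturbation), so $\Phi$ attains its minimum at such a tuple. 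The perturbation argument of Theorem \ref{t:FD->l1}, invoking Lemma \ref{l:support} on $F^*$ (and using reflexivity of the finite-dimensional $F$ to norm $\sum_i\|\mathbf{1}_{A_i^c}V\delta_i\|_1\delta_i^*$ by some $\sum_i\alpha_i\delta_i\in\sph(F)_+$), will force $\alpha_i V\delta_i(t) = \bigvee_j \alpha_j V\delta_j(t)$ for a.e.\ $t\in A_i$; then Corollary \ref{c:arb number} gives $\Phi((\mathbf{1}_{A_i}))\leq 256\vr$, and Lemma \ref{l:norm into L_1} identifies this with $\|V - W\|$. This is the main obstacle—one must verify carefully that the characterization of extreme points and the continuous-measure-space version of the Lemma \ref{l:support} perturbation still yield the desired contradiction.

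To assemble, I pick $c > 1$ with $256(c-1)\vr \leq \sigma/2$ and apply Lemma \ref{l:FD renorm} to $E_n$, obtaining an equivalent norm $\|\cdot\|_0$ on $E_n$ (still $1$-unconditional in the basis, with $\|\cdot\|_0\leq\|\cdot\|\leq c\|\cdot\|_0$) whose LUR dual makes $\|\cdot\|_0$ Fr\'echet differentiable via \cite[Corollary 1.16]{God}. The finite-dimensional $L_1$-analogue above, applied to $T|_{E_n}$ in the norm $\|\cdot\|_0$, produces a finite-rank positive DP operator $S_0 : E_n\to L_1(\mu)$ with $\|T|_{E_n} - S_0\|_0\leq 256\vr$, and hence $\|T|_{E_n} - S_0\|\leq 256c\vr$. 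Setting $S := S_0\circ P_n$ gives a positive, finite rank, disjointness preserving operator $S : E\to L_1(\mu)$ (composition of the band projection $P_n$, which is a lattice homomorphism, with the DP operator $S_0$), and
$$
\|T - S\| \leq \|T - T_n\| + \|T_n - S_0 P_n\| \leq \frac{\sigma}{2} + 256c\vr \leq 256\vr + \sigma,
$$
completing the proof.
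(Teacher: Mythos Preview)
Your domain truncation via the shrinking hypothesis matches the paper's opening move, but from there the two arguments diverge. The paper does \emph{not} attempt an $L_1(\mu)$-target analogue of Theorem~\ref{t:FD->l1}; instead, after restricting to $E_N$, it approximates $T|_{E_N}$ by $PT|_{E_N}$, where $P$ is the conditional expectation onto $L_1(\mathcal A,\mu)$ for a suitably fine \emph{finite} sub-$\sigma$-algebra $\mathcal A$ (this is possible because $T|_{E_N}$ has finite-dimensional, hence compact, range). The target then becomes a finite-dimensional $\ell_1$-space, and the already-established discrete result (Theorem~\ref{t:seq->l1}) applies as a black box. Your route is more direct: you rerun the Dor-style minimisation of Theorem~\ref{t:FD->l1} over $\mathcal H_\Omega\subset\prod_{i\le N}\ball(L_\infty(\mu))_+$.

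Your plan is sound, and the ``main obstacle'' you flag is genuinely surmountable. The key point---that the convex weak-$*$ continuous $\Phi$ attains its minimum at an extreme point of $\mathcal H_\Omega$---does not require the extreme-point set to be closed: the minimum set of a convex continuous function on a compact convex set is itself compact convex, and any of its extreme points is automatically extreme in $\mathcal H_\Omega$. What does need modification is the perturbation step: the singleton bump $\vr\mathbf 1_{\{t\}}$ used in the proof of Theorem~\ref{t:FD->l1} is null for non-atomic $\mu$, so you must instead perturb by $\vr\mathbf 1_B$ for a set $B\subset A_i$ of positive measure on which $\alpha_iV\delta_i<\alpha_jV\delta_j$; Lemma~\ref{l:support} then applies verbatim, and Corollary~\ref{c:arb number} yields the bound $\Phi\le 256\vr$ exactly as before. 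The paper's conditional-expectation detour is more modular and avoids reopening the Dor machinery; your approach shows that the range discretisation is not essential and yields the same constant.
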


\begin{remark}
Note that every positive operator from a space with a shrinking unconditional basis into $L_1(\Omega,\mu)$ is necessarily compact.
\end{remark}

\begin{proof}
As before denote the normalized $1$-unconditional basis of $E$ by $(\delta_i)$,
and set $f_i = T \delta_i$. Then $E^*$ is spanned by $(\delta_i^*)_{i \in \NN}$,
and, by Lemma \ref{l:norm into L_1},
$\|T\| = \|\sum_{i=1}^\infty \|f_i\| \delta_i^*\|$. Given $\sigma>0$, find $N$ so that
$$
\|\sum_{i=N+1}^\infty \|f_i\| \delta_i^*\| < \sigma/4.
$$
Let $E_N = \spn[\delta_1, \ldots, \delta_N] \subset E$.
Find a finite $\sigma$-algebra ${\mathcal{A}}$ in $(\Omega,\mu)$,
so that, for every $x \in \ball(E_N)$,
$$
\|Tx - PTx\| < 2^{-11} \sigma
$$
(here $P$ denotes the conditional expectation onto $L_1({\mathcal{A}}, \mu)$).
Then $T^\prime = PT|_{E_N}$ is $(\vr + 2^{-10} \sigma)$-DP.
Indeed, for every disjoint $x_1, x_2 \in \ball(E_N)$,
\begin{align*}
&
\big\| |T^\prime x_1| \wedge |T^\prime x_2| \big\| \leq
\Big\| \big|T^\prime x_1\big| \wedge \big|(T^\prime - T) x_2 \big| \Big\| +
 \Big\| \big|T^\prime x_1\big| \wedge \big|T x_2\big| \Big\|
\\
&
\leq
\big\|(T^\prime - T) x_2 \big\| +
 \Big\| \big|(T^\prime - T) x_1\big| \wedge \big|Tx_2\big| \Big\| +
 \Big\| \big|T x_1\big| \wedge \big|Tx_2\big| \Big\|
\\
&
\leq
\big\|(T^\prime - T) x_2 \big\| + \big\|(T^\prime - T) x_1 \big\| +
 \Big\| \big|T x_1\big| \wedge \big|Tx_2\big| \Big\| \leq 2^{-10} \sigma + \vr.
\end{align*}

Fix $c \in (1, (256 \vr + \sigma/4)^{-1} (256 \vr + 3\sigma/4))$.
As in the proof of Theorem \ref{t:seq->l1}, we can find
$S^\prime : E_N \to L_1({\mathcal{A}},\mu)$ so that
$0 \leq S^\prime \leq T^\prime$, and
$\|S - T\| \leq (256 \vr + \sigma/4) c$.
Now define $S : E \to L_1(\Omega,\mu)$ by setting $S \delta_i = S^\prime \delta_i$
for $1 \leq i \leq N$, $S \delta_i = 0$ otherwise. Clearly $S$ is
positive and disjointness preserving, and
$$
\big\|T-S\big\| \leq \big\|T^\prime - S^\prime\big\| +
 \big\|T|_{\spn[\delta_i : i > N]}\big\| \leq
256 c \big(2^{-10} \sigma + \vr\big) + \frac{\sigma}{4} < 256 \vr + \sigma ,
$$
due to the choice of $c$.
\end{proof}

\section{Counterexamples}\label{s:counter}

In this section we show that, in general, not every positive almost DP operator
can be approximated by a disjointness preserving one. Actually, our examples
produce positive operators $T$ which are not merely $\vr$-DP, but
have a stronger property: $\||Tx|\wedge|Ty|\|\leq\vr\sqrt{\|x\|\|y\|}$ for any $x\perp y$.

\begin{proposition}\label{p:main}
Suppose $1 \leq p < q < \infty$. Then for any $\vr > 0$ there exists a finite rank
positive $\vr-\DP$ operator $T : \ell_p \to \ell_q$, so that $\|T\| \leq 2^{1-1/q}$,
and $\|T - S\| \geq 2^{-1/q} \geq \|T\|/2$ whenever $S$ is disjointness preserving.
\end{proposition}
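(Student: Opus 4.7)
The plan is to construct an explicit finite-rank positive operator $T:\ell_p^n\to\ell_q^N$, with integers $n,N$ chosen depending on $\vr$, $p$, and $q$, and then to verify positivity, $\vr$-DP, the norm bound $\|T\|\leq 2^{1-1/q}$, and the lower bound $\|T-S\|\geq 2^{-1/q}$ for every DP operator $S$. A natural candidate uses a pair-indexed combinatorial design: set $\Omega=\{\{i,j\}:1\leq i<j\leq n\}$, $A_i=\{\omega\in\Omega:i\in\omega\}$ (so $|A_i|=n-1$ and $|A_i\cap A_j|=1$ for $i\neq j$), and $T\delta_i=c\,\chi_{A_i}\in\ell_q^\Omega$, with $c$ chosen so that $c(n-1)^{1/q}=2^{1-1/q}$. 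This forces $\|T\delta_i\|_q=2^{1-1/q}$, and $n$ is taken large enough (depending on $\vr$) that $c\leq\vr$.

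To verify $\vr$-DP I would analyze $|Tx|\wedge|Ty|$ coordinate by coordinate. For disjoint positive $x=\sum_{i\in A}a_i\delta_i$ and $y=\sum_{j\in B}b_j\delta_j$, the pair $\omega=\{k,l\}$ contributes to the meet only when $\omega$ has one endpoint in $A$ and the other in $B$, the value there being $c\min(a_k,b_l)$. Hence
\begin{equation*}
\||Tx|\wedge|Ty|\|_q^q=c^q\sum_{k\in A,\,l\in B}\min(a_k,b_l)^q.
\end{equation*}
Using $\min(a,b)\leq\sqrt{ab}$ and a power-mean comparison (which runs cleanly for $q\geq 2$), this is bounded by $c^q(\|x\|\|y\|)^{q/2}$, yielding the stronger estimate $\||Tx|\wedge|Ty|\|_q\leq\vr\sqrt{\|x\|\|y\|}$, and in particular $\vr$-DP. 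The bound $\|T\|\leq 2^{1-1/q}$ is immediate for $p=1$, since $\|T\|_{\ell_1\to\ell_q}=\sup_i\|T\delta_i\|_q=2^{1-1/q}$; for $p>1$ one expands $\|Tx\|_q^q=c^q\sum_{\{k,l\}}(a_k+a_l)^q$ and applies a H\"older-type estimate exploiting $p<q$.

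The key step is the lower bound $\|T-S\|\geq 2^{-1/q}$. For a DP operator $S$, write $g_i=S\delta_i$; disjointness of the $g_i$'s means each shared coordinate $e_{\{i,j\}}$ is used by at most one of $g_i,g_j$. Introducing indicators $a_{ij}\in\{0,1\}$ with $a_{ij}+a_{ji}\leq 1$ (where $a_{ij}=1$ iff $g_i$ claims $e_{\{i,j\}}$), the coordinatewise-optimal $g_i$ still produces an error of
\begin{equation*}
\|T\delta_i-g_i\|_q^q\geq c^q\cdot\#\{j\neq i:a_{ij}=0\}.
\end{equation*}
Since $\sum_{i\neq j}a_{ij}\leq\binom{n}{2}$, a double-counting argument yields some $i$ with $\#\{j:a_{ij}=0\}\geq(n-1)/2$, so $\|T\delta_i-g_i\|_q\geq c((n-1)/2)^{1/q}=2^{1-2/q}$. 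The inequality $1-2/q\geq -1/q$ (valid for $q\geq 1$) then gives $\|T\delta_i-g_i\|_q\geq 2^{-1/q}$, and since $\|T-S\|_{\ell_p\to\ell_q}\geq\|(T-S)\delta_i\|_q$ the bound follows.

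The main obstacle is making (i) $\vr$-DP, (ii) $\|T\|\leq 2^{1-1/q}$, and (iii) $\|T-S\|\geq 2^{-1/q}$ hold simultaneously across the full range $1\leq p<q<\infty$. For $p>1$ the norm $\|T\|_{\ell_p\to\ell_q}$ is sensitive to ``spread'' inputs such as $n^{-1/p}\sum_i\delta_i$, which can drive $\|T\|$ above $2^{1-1/q}$; compensating may require tuning the design, e.g.\ replacing the pair-indexed family by a higher-arity or weighted analogue. Likewise, the coordinate-wise estimate in (i) is most transparent for $q\geq 2$, and the regime $q<2$ requires a sharper bound on the pointwise minimum or a modification of the sets $A_i$.
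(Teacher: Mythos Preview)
Your construction is the same as the paper's: the sets $A_i$ are exactly the edge-neighborhoods of vertices in the complete graph $K_n$, and the $\vr$-DP estimate via $\min(a,b)\le\sqrt{ab}$ as well as the pigeonhole lower bound on $\|T-S\|$ are carried out just as in the paper. There are, however, three miscalibrations that need fixing.

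\textbf{Normalization.} You set $c(n-1)^{1/q}=2^{1-1/q}$, i.e.\ $\|T\delta_i\|_q=2^{1-1/q}$. With this scaling your own H\"older computation (expanding $\|Tx\|_q^q=c^q\sum_{k<l}(a_k+a_l)^q$, using $(a+b)^q\le 2^{q-1}(a^q+b^q)$ and $\|\cdot\|_q\le\|\cdot\|_p$) only yields $\|T\|_{\ell_p\to\ell_q}\le c(n-1)^{1/q}\cdot 2^{1-1/q}=2^{2-2/q}$, which overshoots the target. The paper normalizes so that $\|T\delta_i\|_q=1$, i.e.\ $c=(n-1)^{-1/q}$; with this choice your H\"older argument (or the paper's interpolation between $\ell_1\to\ell_1$ and $\ell_\infty\to\ell_\infty$) gives exactly $\|T\|\le 2^{1-1/q}$, and your pigeonhole lower bound becomes $c((n-1)/2)^{1/q}=2^{-1/q}$ on the nose.

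\textbf{Case split for $\vr$-DP.} The ``clean'' regime is $q\ge 2p$ (so that $\|\cdot\|_{q/2}\le\|\cdot\|_p$), not $q\ge 2$. For $p<q<2p$ no redesign is needed: one picks up a H\"older factor $(n)^{2-q/p}$ in the bound for $\sum_k a_k^{q/2}$, and since $2-q/p<1$ this is absorbed by taking $n$ larger (so that $c^q\cdot n^{2-q/p}\le\vr^q$). The paper does precisely this.

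\textbf{The worry about the norm for $p>1$ is unfounded.} Once the normalization is fixed, the H\"older estimate above (or interpolation) already gives $\|T\|_{\ell_p\to\ell_q}\le 2^{1-1/q}$ uniformly in $n$; ``spread'' inputs like $n^{-1/p}\sum_i\delta_i$ actually make $\|Tx\|_q$ small, not large, since $q>p$. No modification of the sets $A_i$ is required.
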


Start with a combinatorial lemma.

\begin{lemma}\label{l:graph}
For $N \in \NN$, let $M = N(N+1)/2$. Then $\{1, \ldots, M\}$ contains sets
$F_1, \ldots, F_{N+1}$ of cardinality $N$ each, so that (i) each number
$s \in \{1, \ldots, M\}$ belongs to exactly two of the sets $F_i$;
(ii) $|F_i \cap F_j| = 1$ if $i \neq j$.
\end{lemma}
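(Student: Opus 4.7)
The plan is to exhibit the sets $F_1,\dots,F_{N+1}$ explicitly by means of the edge–vertex incidence structure of the complete graph on $N+1$ vertices.

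More precisely, I would identify the ground set $\{1,\dots,M\}$ with the edge set $E(K_{N+1})$ of $K_{N+1}$, which is legitimate because $|E(K_{N+1})|=\binom{N+1}{2}=N(N+1)/2=M$. Concretely, fix any bijection between $\{1,\dots,M\}$ and the collection of unordered pairs $\{i,j\}$ with $1\le i<j\le N+1$. For each vertex $v\in\{1,\dots,N+1\}$ define
\[
F_v = \{\,e\in E(K_{N+1}) : v\in e\,\},
\]
i.e. $F_v$ is the set of edges incident to the vertex $v$, viewed as a subset of $\{1,\dots,M\}$ through the chosen bijection.

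It then remains only to verify the three properties. First, $|F_v|=N$ because in $K_{N+1}$ each vertex has degree $N$. Second, every edge $\{i,j\}$ has exactly two endpoints, so it lies in exactly the two sets $F_i$ and $F_j$; this gives property (i). Third, for $i\neq j$ the intersection $F_i\cap F_j$ consists of the edges containing both $i$ and $j$, and the only such edge is $\{i,j\}$ itself, so $|F_i\cap F_j|=1$; this is property (ii).

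There is no real obstacle here: the lemma is a reformulation of the obvious fact that $K_{N+1}$ has $N(N+1)/2$ edges, is $N$-regular, and any two distinct vertices are joined by a unique edge. The only care needed is to present the bijection $\{1,\dots,M\}\leftrightarrow E(K_{N+1})$ cleanly so that properties (i) and (ii), stated in terms of elements of $\{1,\dots,M\}$, follow immediately from the corresponding graph-theoretic statements.
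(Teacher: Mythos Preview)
Your proof is correct and is essentially identical to the paper's own argument: both identify $\{1,\ldots,M\}$ with the edge set of $K_{N+1}$ and take $F_i$ to be the set of edges incident to vertex $i$. Your write-up is in fact slightly more detailed in verifying the three properties.
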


\begin{proof}
Consider the complete graph on $N+1$ vertices, and denote its sets of vertices
and edges by $V$ and $E$ respectively. Write $V = \{v_1, \ldots, v_{N+1}\}$
and $E = \{e_1, \ldots, e_M\}$. Let $F_i$ be the set of all $s$ so that
$e_s$ is adjacent to $v_i$.
\end{proof}

\begin{proof}[Proof of Proposition \ref{p:main}]
Pick $N \in \NN$ so that
$$
\vr \geq \left\{ \begin{array}{ll}
   N^{-1/q}   &   \infty > q \geq 2p,   \\
   \Big( N^{-1} (N+1)^{2 - q/p} \Big)^{1/q}   &   2p > q > p.
\end{array} \right.
$$
Define the operator $T : \ell_p^{N+1} \to \ell_q^M$ by setting
$T \delta_i = N^{-1/q} \one_{F_i}$, where $(\delta_i)$ is the canonical basis
for $\ell_p^{N+1}$. Clearly, $T$ is positive. Moreover,
$$
\|T : \ell_1^{N+1} \to \ell_1^M\| = \max_i \|T \delta_i\|_1 = N^{1/q^\prime} ,
$$
where $1/q + 1/q^\prime = 1$. Furthermore,
$$
\|T : \ell_\infty^{N+1} \to \ell_\infty^M\| = \|T \one\|_\infty =
 N^{-1/q} \|\sum_i \one_{F_i}\| = 2 N^{-1/q}
$$
(for $1 \leq s \leq M$, $\big(\sum_i \one_{F_i}\big)(s) = 2$,
since $s \in F_i$ for exactly two indices $i$). By interpolation,
$$
\|T : \ell_q^{N+1} \to \ell_q^M\| \leq
\|T : \ell_1^{N+1} \to \ell_1^M\|^{1/q}
 \|T : \ell_\infty^{N+1} \to \ell_\infty^M\|^{1/q^\prime} \leq
2^{1/q^\prime} .
$$
As the formal identity from $\ell_p^{N+1}$ to $\ell_q^{N+1}$ is contractive,
the desired estimate for $\|T\|$ follows.

Next show that $T$ is $\vr-\DP$. Consider disjoint elements
\begin{align*}
&
x = \sum_{i \in P_x} \alpha_i \delta_i  {\textrm{   and   }}
y = \sum_{j \in P_y} \beta_j \delta_j ,
\\
&
{\textrm{   where   }}
P_x \cap P_y = \emptyset  {\textrm{  and  }}
P_x \cup P_y = \{1, \ldots, N+1\} .
\end{align*}
For $s \in \{1, \ldots, M\}$ let $Q_s$ be the set of $i$'s for which
$s \in F_i$ (we have $|Q_s| = 2$). If $Q_s \subset P_x$ or $Q_s \subset P_y$,
then $(|Tx| \wedge |Ty|)(s) = 0$. If $Q_s = \{i,j\}$ with $i \in P_x$ and
$j \in P_y$, then
$$
N^{1/q} \big(|Tx| \wedge |Ty|\big)(s) = |\alpha_i| \wedge |\beta_j| \leq
|\alpha_i|^{1/2} |\beta_j|^{1/2} .
$$
Note that any pair $(i,j)$ appears in the right hand side of the centered
inequality at most once (when $Q_s = (i,j)$). Therefore,
$$   \eqalign{
N \big\||Tx| \wedge |Ty|\big\|_q^q
&
=
N \sum_s \big|\big(|Tx| \wedge |Ty|\big)(s)\big|^q \leq
\sum_{i,j} \big( |\alpha_i|^{1/2} |\beta_j|^{1/2} \big)^q
\cr
&
=
\sum_i |\alpha_i|^{q/2} \sum_j |\beta_j|^{q/2} .
}  $$

For $q \geq 2p$,
$$
\Big( \sum_i |\alpha_i|^{q/2} \Big)^{2/q} \leq
\Big( \sum_i |\alpha_i|^p \Big)^{1/p} = \|x\|_p ,
$$
and therefore, $\sum_i |\alpha_i|^{q/2} \leq \|x\|_p^{q/2}$.
Similarly, $\sum_j |\beta_j|^{q/2} \leq \|y\|_p^{q/2}$. Thus,
$$
\big\||Tx| \wedge |Ty|\big\|_q^2 \leq N^{-2/q} \|x\|_p \|y\|_p \leq
\vr^2 \|x\|_p \|y\|_p ,
$$
due to our definition of $\vr$.

For $p < q < 2p$,
$$
\Big( \sum_i |\alpha_i|^{q/2} \Big)^{2/q} \leq
(N+1)^{2/q - 1/p} \Big( \sum_i |\alpha_i|^p \Big)^{1/p} =
(N+1)^{2/q - 1/p} \|x\|_p ,
$$
hence
$$
\sum_i |\alpha_i|^{q/2} \leq (N+1)^{1 - q/(2p)} \|x\|_p^{q/2} .
$$
Handling $\sum_j |\beta_j|^{q/2}$ similarly, we conclude that
$$
N \big\||Tx| \wedge |Ty|\big\|_q^q \leq
 (N+1)^{2 - q/p} \|x\|_p^{q/2} \|y\|_p^{q/2} ,
$$
hence
$$
\big\||Tx| \wedge |Ty|\big\|_q \leq
 \Big( N^{-1} (N+1)^{2 - q/p} \Big)^{1/q} \sqrt{\|x\|_p \|y\|_p} \leq
 \vr \sqrt{\|x\|_p \|y\|_p} .
$$

Finally, we show that $T$ is poorly approximated by disjointness preserving
operators. Suppose $S : \ell_p^{N+1} \to \ell_q^M$ is disjointness preserving.
Let $G_i = \supp (S \delta_i)$ and $H_i = F_i \backslash G_i$. The sets $G_i$ are
disjoint, and $\sum_{i=1}^{N+1} |G_i| \leq M = N(N+1)/2$, hence
$|G_i| \leq N/2$ for some $i$. Then $|H_i| \geq N/2$, hence
$$
\|T-S\| \geq \|(T-S)\delta_i\| \geq N^{-1/q} |H_i|^{1/q} \geq 2^{-1/q} .
$$

Thus, $T$ has all the desired properties.
\end{proof}

The above results can be generalized somewhat (by extending the range space).
Recall that a Banach lattice $X$ \emph{satisfies a lower $q$-estimate
with constant $\cq$} if, for any disjoint $x_1, \ldots, x_n \in X$,
$\| \sum_i x_i \| \geq \cq \big( \sum_i \|x_i\|^q \big)^{1/q}$.

\begin{proposition}\label{p:lower q}
Suppose $1 \leq p < q < \infty$, and $X$ is an infinite dimensional Banach lattice,
satisfying a lower $q$-estimate with constant $\cq$.
Suppose, moreover, that $X$ does not satisfy a lower $r$-estimate for any $r < q$.
Then for any $\vr > 0$ there exists a finite rank
positive $\vr-\DP$ operator $T : \ell_p \to X$, so that
$\|T\| \leq 2^{1-1/q} (1 + \vr)$, and
$\|T - S\| \geq \cq/(2^{-1/q} 3^{-(q-1)/q})$
whenever $S$ is disjointness preserving.
In the particular case of $X = L_q$, we can have
$\|T\| \leq 2^{1-1/q}$, and $\|T - S\| \geq 2^{-1/q}$.
\end{proposition}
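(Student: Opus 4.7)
The plan is to mimic the construction of Proposition~\ref{p:main} inside a general Banach lattice $X$. The hypothesis that $X$ satisfies a lower $q$-estimate with constant $\cq$ but no lower $r$-estimate for any $r<q$ is precisely the condition under which $\ell_q$ is finitely block-represented in $X$ with upper $q$-estimate constant as close to $1$ as desired: for each $\vr'>0$ and each $M\in\NN$, one may find positive disjoint normalized $u_1,\ldots,u_M\in X$ satisfying
\[
\cq\Big(\sum_s|\alpha_s|^q\Big)^{1/q}\leq\Big\|\sum_s\alpha_s u_s\Big\|\leq(1+\vr')\Big(\sum_s|\alpha_s|^q\Big)^{1/q}
\]
for all scalars $(\alpha_s)$; the lower inequality is automatic from the lower $q$-estimate, while the upper one is a standard block-basis/Krivine argument. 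For $X=L_q$ one simply takes $u_s=\one_{A_s}$ on disjoint unit-measure sets.

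With $N$ sufficiently large, $M=N(N+1)/2$, and $(F_i)_{i=1}^{N+1}$ chosen as in Proposition~\ref{p:main} via Lemma~\ref{l:graph}, I would define $T:\ell_p^{N+1}\to X$ by $T\delta_i=N^{-1/q}\sum_{s\in F_i}u_s$. The norm estimate $\|T\|\leq 2^{1-1/q}(1+\vr)$ follows by writing $Tx=N^{-1/q}\sum_s(\alpha_{i_1(s)}+\alpha_{i_2(s)})u_s$ (each $s$ belongs to exactly two $F_i$'s), applying the upper $q$-estimate for $(u_s)$, and invoking $|a+b|^q\leq 2^{q-1}(|a|^q+|b|^q)$, with $\vr'$ chosen small. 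The $\vr$-DP verification is identical in form to that of Proposition~\ref{p:main}: for disjoint positive $x,y\in\ell_p^{N+1}$ the disjointness of $(u_s)$ forces $Tx\wedge Ty=N^{-1/q}\sum_s(\gamma^x_s\wedge\gamma^y_s)u_s$, the coefficient $\gamma^x_s\wedge\gamma^y_s$ is nonzero only for those $s$ whose $Q_s$ straddles $\supp x$ and $\supp y$, and the same combinatorial counting combined with the upper $q$-estimate reproduces the bound $\||Tx|\wedge|Ty|\|\leq\vr\sqrt{\|x\|_p\|y\|_p}$.

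The heart of the proof is the lower bound on $\|T-S\|$ for disjointness preserving $S$. After replacing $S\delta_i$ by $w_i:=|S\delta_i|$ (the family $(w_i)$ is disjoint and positive), set $\gamma_{s,i}:=\|(N^{-1/q}u_s)\wedge w_i\|$. Since $\{(N^{-1/q}u_s)\wedge w_i\}_i$ is a disjoint family dominated by $N^{-1/q}u_s$, the lower $q$-estimate yields $\sum_i\gamma_{s,i}^q\leq\cq^{-q}N^{-1}$, and hence $\sum_s\sum_i\gamma_{s,i}^q\leq\cq^{-q}(N+1)/2$. On the other hand, the sublinearity of $\wedge$ in its first argument gives
\[
[(T-S)\delta_i]_+\geq\sum_{s\in F_i}(N^{-1/q}u_s-w_i)_+,
\]
and the right-hand summands are disjoint (each is dominated by $N^{-1/q}u_s$). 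A further use of the lower $q$-estimate together with the triangle bound $\|(N^{-1/q}u_s-w_i)_+\|\geq N^{-1/q}-\gamma_{s,i}$ produces
\[
\|(T-S)\delta_i\|^q\geq\cq^q N^{-1}\sum_{s\in F_i}(1-N^{1/q}\gamma_{s,i})^q.
\]
Selecting the index $i$ that minimizes $\sum_{s\in F_i}\gamma_{s,i}^q$ using the total budget above, then invoking Jensen's inequality for the convex function $t\mapsto(1-t)^q$, delivers the numerical constant claimed in the statement.

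The main obstacle is tightening the final optimization: whenever $\cq<1$ the constraint $\tilde\gamma_{s,1}^q+\tilde\gamma_{s,2}^q\leq\cq^{-q}$ by itself is too weak, since $(1-\tilde\gamma_{s,1})^q+(1-\tilde\gamma_{s,2})^q$ can vanish on that constraint set; one must simultaneously exploit the global pigeonhole $\sum_i\xi_i\leq\cq^{-q}(N+1)/2$ together with the combinatorics of Lemma~\ref{l:graph} to extract the factor $2^{-1/q}3^{-(q-1)/q}$. In the particular case $X=L_q$ this obstacle disappears: the band projection onto $\bigcup_{s\in F_i}A_s$ reduces the problem to the exact measure-theoretic counting of Proposition~\ref{p:main}, giving the sharp bound $\|T-S\|\geq 2^{-1/q}$.
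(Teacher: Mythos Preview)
Your construction of $T$ and the verification of the norm bound and the $\vr$-DP property are essentially identical to the paper's; both rely on Krivine's theorem for lattices to produce disjoint $u_1,\ldots,u_M$ that span a $(1+\delta)$-copy of $\ell_q^M$, and then copy the combinatorics of Proposition~\ref{p:main} verbatim.

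The lower-bound argument, however, diverges from the paper and has a genuine gap---one you yourself flag in the last paragraph but do not close. By passing to the scalar data $\gamma_{s,i}=\|(N^{-1/q}u_s)\wedge w_i\|$ and retaining only the constraint $\sum_i\tilde\gamma_{s,i}^q\le\cq^{-q}$, you have discarded exactly the structural information needed to extract the constant $\cq\,2^{-1/q}3^{-(q-1)/q}$. Indeed, when $\cq<2^{-1/q}$ the adversary can set $\tilde\gamma_{s,i_1}=\tilde\gamma_{s,i_2}=1$ for every $s$ without violating your scalar budget, driving each term $(1-\tilde\gamma_{s,i})_+^q$ to zero; the obstruction ``$w_{i_1}\perp w_{i_2}$ yet both dominate $N^{-1/q}u_s$'' that rules this out in the lattice is invisible once you have reduced to norms. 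Running your outlined Jensen/pigeonhole optimization honestly gives at best $\|(T-S)\delta_i\|\ge\cq(1-\cq^{-1}2^{-1/q})$, which is vacuous for small $\cq$ and does not match the stated constant even when positive.

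The paper avoids this by never collapsing to scalars. Using the fact that a lower $q$-estimate forces order continuity, it introduces the band projections $P_i=\proj_{S\delta_i}\proj_{T\delta_i}$, which are \emph{mutually orthogonal}. This yields, for each $s$ with $Q_s=\{i_1,i_2\}$, a genuine disjoint decomposition $x_s=y_{0s}+y_{i_1s}+y_{i_2s}$. One then has
\[
N^{1/q}\|(T-S)\delta_i\|\ \ge\ \big\|\proj_{S\delta_i}^\perp\proj_{T\delta_i}(T\delta_i)\big\|
 \ =\ \Big\|\sum_{s\in F_i}(y_{0s}+y_{i's})\Big\|,
\]
and after summing over $i$ and applying the lower $q$-estimate, the elementary inequality $a^q+b^q+c^q\ge 3^{1-q}(a+b+c)^q$ together with $\|y_{0s}\|+\|y_{i_1s}\|+\|y_{i_2s}\|\ge\|x_s\|=1$ produces the claimed constant directly. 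The three-piece disjoint splitting of $x_s$ is the missing idea in your scheme; your quantities $(N^{-1/q}u_s)\wedge w_i$ are disjoint in $i$ but do \emph{not} complement to $x_s$ disjointly, which is why the scalar optimization collapses. For the $L_q$ case your reduction to Proposition~\ref{p:main} is fine and matches the paper.
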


\begin{remark}\label{r:abram}
Recall that there are no non-zero disjointness preserving
operators from $L_p(0,1)$ to $L_q(0,1)$, when $p < q$ (see \cite{Abr}, and also Proposition \ref{p:norm Lp}.)
\end{remark}

\begin{proof}
Follow the proof of Proposition \ref{p:main}.
Pick $N \in \NN$ so that
$$
\frac{\vr}{2} > \left\{ \begin{array}{ll}
   N^{-1/q}   &   \infty > q \geq 2p,   \\
   \Big( N^{-1} (N+1)^{2 - q/p} \Big)^{1/q}   &   2p > q > p.
\end{array} \right.
$$
Let $M = N(N+1)/2$. Fix $\delta \in (0,1/4)$.
By Krivine's Theorem for lattices (see e.g. \cite{Schep}),
there exist disjoint positive norm one $x_1, \ldots, x_M \in X$
so that, for any $\alpha_1, \ldots, \alpha_M \in \CC$,
$$
\frac{1}{1+\delta} \|\sum_i \alpha_i x_i\| \leq
\big( \sum_i |\alpha_i|^q \big)^{1/q} \leq
(1+\delta) \|\sum_i \alpha_i x_i\| .
$$
Define the operator $T : \ell_p^{N+1} \to X$ by setting
$T \delta_i = N^{-1/q} \sum_{j \in F_i} x_j$,
where $(\delta_i)$ is the canonical basis
for $\ell_p^{N+1}$. Clearly, $T$ is positive.
From the proof of Proposition \ref{p:main},
$\|T\| \leq (1+\delta) 2^{1/q'}$, and $T$ is $(1+\delta)\vr/2-\DP$.

It remains to show that, if $S : \ell_p^{N+1} \to X$ is disjointness
preserving, then $\max_{1 \leq i \leq N} \|(T - S)\delta_i\| \geq \cq/(3 \cdot 2^{-1/q})$.

It is easy to see that any disjoint order bounded sequence in $X$ is
norm null, hence (see e.g. \cite[Section 2.4]{M-N}) $X$ is order continuous.
This, in turn, implies that any ideal in $X$ is a projection band.
For $x \in X$, we shall denote by $\proj_x$ the band projection
corresponding to $x$. Let $P_i = \proj_{S \delta_i} \proj_{T \delta_i}$.
If $P$ is a projection, we use the shorthand $P^\perp = I - P$.
By the basic properties of band projections (see e.g. \cite[Section 1.2]{M-N}),
$P_i$'s are band projections, and $P_i P_j = 0$ if $i \neq j$.

Recall that, for $1 \leq s \leq M$, $Q_s = \{1 \leq i \leq N+1 : s \in F_i\}$,
and $|Q_s| = 2$. Let $y_{is} = P_i x_s$, and note that $y_{is} = 0$
unless $s \in F_i$, or equivalently, $i \in Q_s$. Also let
$y_{0s} = x_s - \sum_{i \in Q_s} P_i x_s = (\sum_{i \in Q_s} P_i)^\perp x_s$.
The elements $y_{is}$ are disjoint. We have
$$  \eqalign{
N^{1/q} \|(T - S) \delta_i\|
&
\geq
N^{1/q} \|\proj_{S \delta_i}^\perp \proj_{T \delta_i} (T \delta_i)\|
\cr
&
=
\|\sum_{s \in F_i} (x_s - y_{is})\| \cr
&
=\|\sum_{s \in F_i} (y_{0s} + y_{i^\prime s})\| ,
}  $$
where $i^\prime$ is such that $Q_s = \{i, i^\prime\}$. By the lower $q$-estimate,
$$
N \|(T - S) \delta_i\|^q \geq
\cq^q \sum_{s \in F_i} \big(\|y_{0s}\|^q + \|y_{i^\prime s}\|^q \big) .
$$
Consequently,
$$  \eqalign{
\cq^{-q} N \sum_{i=1}^{N+1} \|(T - S) \delta_i\|^q & \geq
\sum_{i=1}^{N+1} \sum_{s \in F_i} \big(\|y_{0s}\|^q + \|y_{i^\prime s}\|^q \big)
\cr
&
=
\sum_{s=1}^M \sum_{i \in Q_s} \big(\|y_{0s}\|^q + \|y_{i^\prime s}\|^q \big) \cr
& =
\sum_{s=1}^M \big(2 \|y_{0s}\|^q + \sum_{i \in Q_s} \|y_{i s})\|^q \big) .
}   $$
An easy computation shows that the inequality
$$
2 a^q + b^q + c^q \geq a^q + b^q + c^q \geq 3^{1-q} (a+b+c)^q
$$
holds for any non-negative reals $a, b, c$, hence
$$  \eqalign{
2 \|y_{0s}\|^q + \sum_{i \in Q_s} \|y_{i s}\|^q
&
\geq
3^{1-q} \big( \|y_{0s}\| + \sum_{i \in Q_s} \|y_{is}\| \big)^q
\cr
&
\geq
3^{1-q} \|y_{0s} + \sum_{i \in Q_s} y_{is}\|^q
\cr
&= 3^{1-q} \|x_s\|^q .
}  $$
Therefore,
$$
\cq^{-q} N \sum_{i=1}^{N+1} \|(T - S) \delta_i\|^q \geq
\frac{1}{3^{q-1}} \sum_{s=1}^M \|x_s\|^q = \frac{M}{3^{q-1}} .
$$
Thus, for some $i$,
$$
\|(T - S) \delta_i\|^q \geq \frac{\cq^q M}{3^{q-1} N (N+1)} =
\frac{\cq^q}{2 \cdot 3^{q-1}} .
$$

The particular case of $X = L_q(\mu)$ is more straightforward.
In this case, $\cq = 1$, and the $x_i$'s satisfy $\|\sum_i \alpha_i x_i\| =
\big( \sum_i |\alpha_i|^q \big)^{1/q}$ (that is, we can take $\delta = 0$).
Keeping the same notation as before, we obtain:
$$  \eqalign{
N \sum_{i=1}^{N+1} \|(T - S) \delta_i\|^q
&
\geq
\sum_{s=1}^M \sum_{i \in \{0\} \cup Q_s} \|y_{is}\|^q
\cr
&
=
\sum_{s=1}^M \|\sum_{i \in \{0\} \cup Q_s} y_{is}\|^q \cr
&=
\sum_{i=1}^M \|x_s\|^q = M ,
}  $$
hence, for some $i$, $\|(T - S) \delta_i\|^q \geq M/(N(N+1)) = 1/2$.
\end{proof}

\section{Modulus of an $\vr$-DP operator}\label{s:modulus}

By \cite[Section 3.1]{M-N}, the modulus of a disjointness preserving
operator $T$ exists, and for any $x \geq 0$, $|T| x = |Tx|$.
It is easy to see that $\| |T| \| = \|T\|$, and that $|T|$
preserves disjointness. Conversely, if $|T|$ exists, and is
disjointness preserving, then the same is true for $T$. More generally,
if $|T|$ is $\vr$-DP, then $T$ is $\vr$-DP.
Indeed, suppose $|T|$ is $\vr$-DP, and pick disjoint $x$ and $y$:
$$
\||Tx|\wedge|Ty|\| \leq \||T||x|\wedge|T||y|\| \leq \vr \max\{\|x\|,\|y\|\} .
$$

For operators into Dedekind complete $C(K)$ spaces we have a converse:

\begin{proposition}\label{p:modulus}
Consider $T \in B(E,F)$, where $E$ and $F$ are Banach lattices, and
$F$ is an $M$-space.
If $T \in B(E,F)$ is $\vr$-DP, and the modulus $|T|$ exists,
then $|T|$ is $\vr$-DP.
\end{proposition}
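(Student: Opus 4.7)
The plan is to combine the Riesz--Kantorovich formula for $|T|$ with the $M$-norm's compatibility with order suprema. Since $|T|$ is a positive operator, Proposition~\ref{p:positive} reduces the problem to establishing $\||T|x \wedge |T|y\|_F \le \vr$ for disjoint $x, y \in E_+$ with $\|x\|, \|y\| \le 1$; no extra constants will be needed in the complex case because $|T|$ is automatically real and positive.

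The key input is the modulus formula $|T|z = \sup\{|Tu| : |u| \le z\}$, which holds as an order supremum in $F$ for every $z \in E_+$ because $|T|$ exists. Applying the infinite distributive law in the vector lattice $F$ (valid whenever the participating sups exist), I would deduce
\[
|T|x \wedge |T|y = \sup\bigl\{|Tu| \wedge |Tv| : |u| \le x,\ |v| \le y\bigr\}.
\]
For any pair $(u,v)$ in this family the vectors $u$ and $v$ are disjoint, since $|u| \wedge |v| \le x \wedge y = 0$, and they lie in $\ball(E)$ since $\|u\| = \||u|\| \le \|x\| \le 1$ (similarly for $v$). The $\vr$-DP hypothesis on $T$ then delivers the uniform estimate $\||Tu| \wedge |Tv|\|_F \le \vr$ for every member of the family.

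The remaining task is to convert this member-wise bound into a bound on the norm of the supremum. I would replace the family by the directed net of finite lattice joins $b_\sigma = \bigvee_{(u,v) \in \sigma}(|Tu| \wedge |Tv|)$ indexed by finite subsets $\sigma$; this is an increasing positive net whose supremum in $F$ is still $|T|x \wedge |T|y$. The defining $M$-norm identity $\|a \vee b\| = \max\{\|a\|, \|b\|\}$ for $a, b \ge 0$ extends by induction to finite joins, giving $\|b_\sigma\|_F \le \vr$ for every $\sigma$. The conclusion then follows from the Fatou-type identity $\|\sup_\alpha a_\alpha\|_F = \sup_\alpha \|a_\alpha\|_F$ for any positive increasing net with supremum in $F$: via the Kakutani representation of $F$ inside some $C_0(L)$, one truncates a candidate majorant at the level $M = \sup_\alpha \|a_\alpha\|_F$ to see that $\sup_\alpha a_\alpha \le M$ in $F$.

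The main technical point is this last Fatou-type identity for $M$-norms; the rest of the argument is a mechanical unwinding of the modulus formula and of the $\vr$-DP definition, combined with the lattice-theoretic distributive law.
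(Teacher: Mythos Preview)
Your proposal is correct and follows essentially the same route as the paper: invoke the Riesz--Kantorovich formula $|T||x|=\bigvee_{|u|\le |x|}|Tu|$, distribute the infimum through the supremum, observe that each $|Tu|\wedge|Tv|$ has norm at most $\vr$ by the $\vr$-DP hypothesis (since $u\perp v$ and $\|u\|,\|v\|\le 1$), and then use the $M$-space property to pass from the uniform member-wise bound to a bound on the norm of the supremum. The paper simply asserts the identity $\bigl\|\bigvee_{\alpha} b_\alpha\bigr\|=\sup_\alpha\|b_\alpha\|$ for $M$-spaces in one line, whereas you spell out the passage through finite joins and the Fatou-type step; apart from this extra care, the two arguments are the same.
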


\begin{remark}\label{r:modulus}
Suppose, in Proposition \ref{p:modulus}, $F$ is a Dedekind complete $M$-space,
with a strong order unit (equivalently, $F = C(K)$, where $K$ is a Stonian
compact Hausdorff space, see e.g. \cite[Sections 1.a-b]{LT2}).
Then any operator $T \in B(E,F)$ has modulus $|T|$, and
$\| |T| \| = \|T\|$, see e.g. \cite{WiSUR}.
\end{remark}

\begin{proof}
Recall that for any $x\in E$ we have $|T||x|=\vee_{|y|\leq|x|}|Ty|$.
Now, given disjoint $x_1, x_2$ we have
\begin{align*}
\big\| \big| |T| x_1 \big| \wedge \big| |T| x_2 \big| \big\|
 &\leq \big\| |T| |x_1| \wedge  |T| |x_2| \big\| \\
&=\big\|\vee_{|y_1|\leq|x_1|}|Ty_1|\wedge\vee_{|y_2|\leq|x_2|}|Ty_2|\big\| \\
&=\big\|\vee_{|y_1|\leq|x_1|,|y_2|\leq|x_2|}|Ty_1|\wedge|Ty_2|\big\| .
\end{align*}
As $F$ is an $M$-space,
$$
\big\|\vee_{|y_1|\leq|x_1|,|y_2|\leq|x_2|}|Ty_1|\wedge|Ty_2|\big\| =
\sup_{|y_1|\leq|x_1|,|y_2|\leq|x_2|}\big\||Ty_1|\wedge|Ty_2|\big\| .
$$
Recall that $T$ is $\vr$-DP, hence
$$
\big\||Ty_1|\wedge|Ty_2|\big\| \leq \vr \max\{\|y_1\|,\|y_2\|\}
\leq\vr \max\{\|x_1\|,\|x_2\|\} ,
$$
and therefore, $\big\| \big| |T| x_1 \big| \wedge \big| |T| x_2 \big| \big\| \leq
 \vr \max\{\|x_1\|,\|x_2\|\}$.
\end{proof}

Incidentally, in the non-locally convex setting, we have some stability
for the modulus of an $\vr$-DP operator.

\begin{proposition}
Let $0<p\leq1/2$, a Banach lattice $E$ and $T:\ell_p\rightarrow E$ an $\vr$-DP operator.
The modulus $|T|$ (which is also bounded) is $\sqrt{\vr \|T\|}$-DP.\end{proposition}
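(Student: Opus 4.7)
The plan is to compute $|T|$ explicitly on positive vectors using the atomic structure of $\ell_p$, then bound the modulus applied to disjoint positives by exploiting a pairwise geometric mean of the $\vr$-DP bound and the trivial $\|T\|$ bound.

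First, write $f_i = T\delta_i$. Since $(\delta_i)$ is an unconditional atomic basis of $\ell_p$ and $p \leq 1$, standard formulas for the modulus (combined with the identity $\sup_\varepsilon \sum_i \varepsilon_i \alpha_i f_i = \sum_i \alpha_i |f_i|$ in a Banach lattice, where the sup is over sign choices) give $|T|x = \sum_i \alpha_i |f_i|$ for every finitely supported positive $x = \sum_i \alpha_i \delta_i$, and hence (by continuity of $|T|$) for every positive $x \in \ell_p$. Observe also that $\|f_i\| = \|T\delta_i\| \leq \|T\|$ for every $i$.

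By the positive version of Proposition \ref{p:positive} applied to the positive operator $|T|$, it suffices to establish the $\sqrt{\vr\|T\|}$-DP bound on positive disjoint elements. Let $x_1 = \sum_{i \in A_1}\alpha_i\delta_i$ and $x_2 = \sum_{j \in A_2}\beta_j\delta_j$ be such elements (finitely supported, by density), where $A_1 \cap A_2 = \emptyset$ and $\alpha_i,\beta_j \geq 0$. The elementary lattice inequality $(u_1+u_2) \wedge v \leq (u_1\wedge v) + (u_2\wedge v)$, iterated twice, yields
$$
|T|x_1 \wedge |T|x_2 \;=\; \Big(\sum_{i\in A_1}\alpha_i|f_i|\Big)\wedge\Big(\sum_{j\in A_2}\beta_j|f_j|\Big) \;\leq\; \sum_{(i,j)\in A_1\times A_2}\big(\alpha_i|f_i|\big)\wedge\big(\beta_j|f_j|\big).
$$
For each pair $(i,j)$, two bounds are available: applying $\vr$-DP to the disjoint atoms $\alpha_i\delta_i$ and $\beta_j\delta_j$ gives $\|\alpha_i|f_i| \wedge \beta_j|f_j|\| \leq \vr\max(\alpha_i,\beta_j)$, while the trivial inclusion $\alpha_i|f_i|\wedge\beta_j|f_j| \leq \alpha_i|f_i| \wedge \beta_j|f_j|$ combined with $\|f_k\| \leq \|T\|$ gives $\|\alpha_i|f_i|\wedge\beta_j|f_j|\| \leq \|T\|\min(\alpha_i,\beta_j)$. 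Taking geometric means,
$$
\big\|\alpha_i|f_i|\wedge\beta_j|f_j|\big\| \;\leq\; \sqrt{\vr\|T\|\,\alpha_i\beta_j}.
$$

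Summing over $(i,j)$ and factorising,
$$
\big\||T|x_1\wedge|T|x_2\big\| \;\leq\; \sqrt{\vr\|T\|}\,\Big(\sum_{i\in A_1}\sqrt{\alpha_i}\Big)\Big(\sum_{j\in A_2}\sqrt{\beta_j}\Big).
$$
The hypothesis $p \leq 1/2$ enters precisely here: by the monotonicity of $\ell_p$-quasinorms ($\|\cdot\|_p \geq \|\cdot\|_{1/2}$ for $p\leq 1/2$),
$$
\Big(\sum_{i\in A_k}\sqrt{\alpha_i}\Big)^2 \;=\; \|x_k\|_{1/2} \;\leq\; \|x_k\|_p, \qquad k=1,2,
$$
so the product on the right is at most $\sqrt{\|x_1\|_p\|x_2\|_p}\leq\max(\|x_1\|_p,\|x_2\|_p)$, finishing the proof. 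The main obstacle is recognising that the $p\leq 1/2$ hypothesis is exactly what is needed to absorb $\sum\sqrt{\alpha_i}$ into $\sqrt{\|x\|_p}$; once that is spotted, the proof is driven by the pairwise geometric mean of the two obvious bounds on $\|\alpha|f_i|\wedge\beta|f_j|\|$.
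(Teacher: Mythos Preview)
Your proof is correct and follows essentially the same route as the paper's: compute $|T|\delta_i=|f_i|$, expand $|T|x_1\wedge|T|x_2$ into a double sum of pairwise minima, bound each term by the geometric mean of the $\vr$-DP estimate and the trivial $\|T\|$ estimate, and then use $p\le 1/2$ to control $\sum_i\sqrt{\alpha_i}$ by $\sqrt{\|x\|_p}$. Your write-up is in fact slightly cleaner than the paper's at the final step (you correctly identify the relevant quasinorm as $\|\cdot\|_{1/2}$, whereas the paper's displayed inequality has what appears to be a typographical slip).
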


\begin{proof}
Let $f_n=T\delta_n$, where $(\delta_n)_{n=1}^\infty$ form the canonical basis
of $\ell_p$. We have that $|T|\delta_n=|f_n|$. Indeed, since $\delta_n$ is an atom we have
$$
|T|\delta_n=\sup\{|Ty|:|y|\leq \delta_n\}=\sup\{|T\lambda\delta_n|:|\lambda|\leq1\}=|T\delta_n|.
$$
Therefore, $|T|:\ell_p\rightarrow E$ is given by $|T|(\sum_n a_n\delta_n)=\sum_n a_n |f_n|$
(which defines a bounded operator).
We claim that, for $n \neq m$,
\begin{equation}
\||a_nf_n|\wedge|b_mf_m|\|\leq \sqrt{\vr \|T\|} \sqrt{|a_n| |b_m|} .
\label{eq:p<1/2}
\end{equation}
Indeed, as $T$ is $\vr$-DP, we have
$\||a_nf_n|\wedge|b_mf_m|\|\leq\vr(|a_n| \vee |b_m|)$.
Also, $\||a_nf_n|\wedge|b_mf_m|\|\leq\|a_nf_n\|\wedge\|b_mf_m\|
 \leq \|T\| |a_n| \wedge |b_m|$.
Assume without loss of generality that $|a_n| \leq |b_m|$. Then
$\||a_nf_n|\wedge|b_mf_m|\|\leq\vr |b_m|\wedge \|T\| |a_n| \leq
\sqrt{\vr |b_m|\|T\| |a_n|}$, establishing \eqref{eq:p<1/2}.

Now, let $x,y\in\ell_p$ be disjoint elements. We can write
$x=\sum_{i\in A} a_i \delta_i$, $y=\sum_{j\in B} b_j \delta_j$ with $A\cap B=\emptyset$.
Taking \eqref{eq:p<1/2} into account, we obtain
\begin{align*}
\Big\|\big||T|x\big|\wedge\big||T|y\big|\Big\|
&\leq\Big\||T|(\sum_{i\in A}|a_i|\delta_i)\wedge|T|(\sum_{j\in B}|b_j|\delta_j)\Big\|  \\
&\leq\Big\|\sum_{i\in A}\sum_{j\in B}|a_if_i|\wedge|b_jf_j|\Big\|\leq
\sum_{i\in A}\sum_{j\in B}\big\||a_if_i|\wedge|b_jf_j|\big\|   \\
&\leq
\sum_{i\in A}\sum_{j\in B} \sqrt{\vr \|T\|} \sqrt{|a_n| |b_m|}
\leq \sqrt{\vr \|T\|} \sqrt{\|x\|_2\|y\|_2}  \\
& \leq
\sqrt{\vr \|T\|} \sqrt{\|x\|_p\|y\|_p} \leq \sqrt{\vr \|T\|} \max\{\|x\|_p,\|y\|_p\}.
\end{align*}
\end{proof}

The result below shows that, in general, the $\vr$-disjointness preserving properties of $T$
do not allow us to conclude anything about the $\vr$-disjointness properties of $|T|$,
even if the latter exists.

\begin{proposition}\label{p:bad mod}
For every $\vr > 0$, there exists an operator $T \in B(\ell_2)$,
so that $\|T\| \geq 1$, $\| |T| \| \leq 2$, $T$ is $\vr$-DP,
yet $|T|$ is not $c$-DP whenever $c \leq 1/2$.
Moreover, $\|T - I_{\ell_2}\| < \vr$, while
$\| |T| - U \| \geq 1/(3\sqrt2)$ whenever $U$ is disjointness preserving.
\end{proposition}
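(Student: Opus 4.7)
The plan is to construct $T$ as a small perturbation of the identity using a Hadamard matrix, so that sign cancellations in $K:=T-I$ make $\|K\|$ small (forcing $T$ to be $\vr$-DP) while the entry-wise modulus inherent in $|T|$ produces a positive rank-one ``bump'' that no disjointness preserving operator can match. Fix $\vr>0$ and choose $N=2^k$ large (of order $\vr^{-2}$). Let $H$ be the $N\times N$ Sylvester Hadamard matrix, so $H_{ij}\in\{\pm1\}$, $HH^T=NI$ and $\|H\|=\sqrt N$. Define $K\in B(\ell_2)$ by $K_{ij}=H_{ij}/N$ for $i\neq j$ with $i,j\in[1,N]$ and $K_{ij}=0$ otherwise; set $T:=I+K$. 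Zeroing the diagonal of $K$ ensures $|T|_{ii}=1$. Then $\|K\|\leq 1/\sqrt N+1/N<\vr$ for $N$ large, and a splitting-by-supports argument (computing $|Tx|\wedge|Ty|$ separately on $\supp x$, on $\supp y$, and on the complement, using $|T|=I$ on $y=0$ etc.) gives $\||Tx|\wedge|Ty|\|\leq 3\|K\|$ for disjoint unit $x,y$, so $T$ is $\vr$-DP. Moreover $\|Te_1\|^2=1+(N-1)/N^2>1$, so $\|T\|>1$.

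The modulus satisfies $|T|_{ij}=1/N$ for all $i\neq j$ in $[1,N]$, so $|T|=I+L$ with $L=(J-I)/N$ on the $[1,N]$-block ($J$ the all-ones matrix). Hence $\|L\|=(N-1)/N<1$ and $\||T|\|\leq 2$. To see $|T|$ is not $(1/2)$-DP, take the disjoint unit vectors $\one_{[1,N/2]}/\sqrt{N/2}$ and $\one_{[N/2+1,N]}/\sqrt{N/2}$; a direct entry-wise computation shows $|T|$ applied to either equals $1/\sqrt{2N}$ on the complementary half of $[1,N]$ (and a strictly larger constant on its own half), so the lattice meet has constant value $1/\sqrt{2N}$ on all of $[1,N]$, giving norm $1/\sqrt 2>1/2$.

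\textbf{The main obstacle} is the uniform lower bound $\||T|-U\|\geq 1/(3\sqrt 2)$ over all DP $U$. Consider the unit vectors $v:=\one_{[1,N]}/\sqrt N$ and $w:=(1/\sqrt N)\sum_{j=1}^N\sigma_j e_j$ with signs $\sigma_j\in\{\pm1\}$ chosen so that $\sum_j\sigma_j=0$ (possible since $N$ is even). A direct calculation using $|T|=I+(J-I)/N$ on the block gives $|T|v=(2-1/N)v$ and $|T|w=(1-1/N)w$. On the other hand, for any DP operator $U\in B(\ell_2)$ the vectors $\{Ue_j\}_{j=1}^N$ have pairwise disjoint supports in $\NN$, hence are pairwise orthogonal in $\ell_2$, so
\[
\|Uv\|^2 \;=\; \frac{1}{N}\sum_{j=1}^N\|Ue_j\|^2 \;=\; \|Uw\|^2.
\]
Setting $\alpha:=\|Uv\|=\|Uw\|$ and applying the reverse triangle inequality to each of $v$ and $w$,
\[
\||T|-U\| \;\geq\; \max\bigl(\bigl|\,\||T|v\|-\alpha\,\bigr|,\ \bigl|\,\||T|w\|-\alpha\,\bigr|\bigr) \;\geq\; \tfrac12\bigl(\||T|v\|-\||T|w\|\bigr) \;=\; \tfrac12,
\]
which far exceeds $1/(3\sqrt 2)$. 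The heart of the argument is this ``symmetric amplification'': no DP operator can distinguish the norms of $v$ and $w$ (both produce orthogonal sums of the same $\|Ue_j\|$'s, with signs irrelevant to the $\ell_2$-norm), whereas $|T|$ doubles $v$ while fixing $w$.
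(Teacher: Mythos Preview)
Your proof is correct. The construction is essentially the same idea as the paper's---a Hadamard/Walsh perturbation of the identity---but you use a single $N\times N$ block rather than the paper's infinite direct sum $T=\bigoplus_{i\ge k}(I+2^{-i/2}S_i)$, which is a genuine simplification (and zeroing the diagonal of $K$ cleanly makes $|T|_{ii}=1$, sidestepping a minor diagonal issue in the paper's computation of $|T_i|$).

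The real difference is in the lower bound $\||T|-U\|\ge 1/(3\sqrt2)$. The paper argues by contradiction: it takes the same disjoint pair $x,y$ used to show $|T|$ is not $(1/2)$-DP, expands $(|T|x)\wedge(|T|y)$ via $|T|=U+V$ and the sublinearity of $\wedge$, and bounds the result by $3\|V\|$. Your argument instead exploits a feature specific to $\ell_2$: for any DP operator $U$ the images $Ue_j$ are disjointly supported, hence \emph{orthogonal}, so $\|Uv\|=\|Uw\|$ whenever $v,w$ are $\pm$-sign vectors on $[1,N]$ of the same modulus. Since $|T|=I+(J-I)/N$ has $v$ and $w$ as eigenvectors with eigenvalues $2-1/N$ and $1-1/N$, the reverse triangle inequality forces $\||T|-U\|\ge 1/2$. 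This is cleaner and yields a strictly better constant than the paper's $1/(3\sqrt2)$; the trade-off is that it leans on the Hilbert-space structure of $\ell_2$, whereas the paper's meet-based argument would transplant more readily to other lattices.
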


Start by observing that the property of being $\vr$-DP is preserved by direct sums.

\begin{lemma}\label{l:disrum}
Suppose $(E_i)_{i\in\mathbb N}$, $(F_i)_{i\in\mathbb N}$ are Banach lattices, $U$
is a Banach space with a $1$-unconditional basis, and
the operators $T_i \in B(E_i,F_i)$ are such that
$\sup_i \|T_i\| < \infty$. Define the Banach lattices
$E = (\oplus_i E_i)_U$ and $F = (\oplus_i F_i)_U$,
and the operator $T = \oplus_i T_i \in B(E,F)$.
If $T_i$ is $\vr$-DP for every $i\in\mathbb N$, then $T$ is $2\vr$-DP.
\end{lemma}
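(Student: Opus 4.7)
The plan is to reduce the direct-sum claim to coordinate-wise estimates, using that the norms in $E$ and $F$ are recovered from the norms in the summands via the $1$-unconditional basis of $U$.

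First I would fix disjoint $x, y \in E$, written as $x = (x_i)_i$ and $y = (y_i)_i$ with $x_i, y_i \in E_i$. Because the lattice operations in a $U$-direct sum act coordinate-wise, $|x| \wedge |y| = 0$ in $E$ forces $|x_i| \wedge |y_i| = 0$ in each $E_i$. Similarly $|Tx| \wedge |Ty| = (|T_i x_i| \wedge |T_i y_i|)_i$, so the $F$-norm reads
\[
\big\| |Tx| \wedge |Ty| \big\|_F = \Big\| \big( \| |T_i x_i| \wedge |T_i y_i| \|_{F_i} \big)_i \Big\|_U .
\]

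Next I would apply the hypothesis that each $T_i$ is $\vr$-DP to the disjoint pair $x_i, y_i$ to get $\| |T_i x_i| \wedge |T_i y_i| \|_{F_i} \leq \vr \max\{\|x_i\|_{E_i}, \|y_i\|_{F_i}\}$ (note: $\|y_i\|_{E_i}$). Plugging this into the display above and using $1$-unconditionality of the basis of $U$ (so that the norm on sequences of non-negative reals is monotone and subadditive) yields
\[
\big\| |Tx| \wedge |Ty| \big\|_F \leq \vr \Big\| \big( \max\{\|x_i\|, \|y_i\|\} \big)_i \Big\|_U \leq \vr \Big\| \big( \|x_i\| + \|y_i\| \big)_i \Big\|_U .
\]

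Finally, by the triangle inequality in $U$ and the definition of the $U$-direct sum norm, the right-hand side is bounded by $\vr(\|x\|_E + \|y\|_E) \leq 2\vr \max\{\|x\|_E, \|y\|_E\}$, which is what we want. There is no real obstacle here; the only point to be careful about is that the lattice operations and the norm on a $U$-direct sum are genuinely coordinate-wise and monotone in the coordinates — both of which follow at once from the $1$-unconditionality of the basis of $U$. The factor $2$ is the cost of passing from $\max\{a,b\}$ to $a+b$ inside the $U$-norm before splitting by the triangle inequality.
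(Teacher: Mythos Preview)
Your argument is correct and follows essentially the same route as the paper's own proof: coordinate-wise disjointness, the identity $\big\| |Tx| \wedge |Ty| \big\|_F = \big\| ( \| |T_i x_i| \wedge |T_i y_i| \|_{F_i} )_i \big\|_U$, the $\vr$-DP bound in each coordinate, then $\max \leq$ sum and the triangle inequality in $U$. The paper cites \cite[Proposition 1.d.2]{LT2} for the coordinate-wise lattice/norm structure you appeal to, and otherwise the two proofs are line-for-line the same (note your parenthetical catches the typo $\|y_i\|_{F_i}$, which should indeed be $\|y_i\|_{E_i}$).
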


\begin{proof}
Consider disjoint $x = (x_i)_{i\in\mathbb N}, y = (y_i)_{i\in\mathbb N} \in E$
(here $x_i, y_i \in E_i$, for every $i\in\mathbb N$). By \cite[Proposition 1.d.2]{LT2}, we have
\begin{align*}
\left\| |Tx| \wedge |Ty| \right\| &=\left\| \big(\| |T_i x_i| \wedge |T_i y_i| \|\big)_i \right\|_U\\
&\leq\vr \left\| \Big(\max\{\|x_i\|, \|y_i\|\} \Big)_i \right\|_U \\
& \leq\vr \left\| \big( \|x_i\|+\|y_i\| \big)_i \right\|_U \\
&\leq2\vr \max\{\|x\|, \|y\|\}.
\end{align*}
\end{proof}

\begin{proof}[Proof of Proposition \ref{p:bad mod}]
Consider the operators $S_i \in B(\ell_2^{2^i})$, given by unitary
Walsh matrices. It is known that
$$
|S_i| = 2^{i/2} \xi_i \otimes \xi_i,
$$
where $\xi_i$ is the unit vector $2^{-i/2} \sum_{j=1}^{2^i} e_j$
($e_1, \ldots, e_{2^i}$ is the canonical basis of $\ell_2^{2^i}$).
Let
$$
T_i = I_{\ell_2^{2^i}} + 2^{-i/2} S_i.
$$
Pick $k \in \NN$ so that $2^{-k/6} < \vr/6$.
Identify $(\oplus_{i \geq k} \ell_2^{2^i})_2$ with $\ell_2$,
then we can view $T = \oplus_{i \geq k} T_i$ as an operator on $\ell_2$.
We show that $T$ has the required properties.

Indeed, for any $i$, $\|T_i\| \geq 1 - 2^{-i/2}$, hence
$\|T\| = \sup_i \|T_i\| \geq 1$. Furthermore,
$\|T - I_{\ell^2}\| = \sup_i 2^{-i/2}\|S_i\| \leq \vr$.
The operator $|T| = \oplus_i (I_{\ell_2^{2^i}} + \xi_i \otimes \xi_i)$
has norm $2$.

Now fix $i > k$, and consider disjoint vectors
$x = 2^{-(i-1)/2} \sum_{j=1}^{2^{i-1}} e_j$ and
$y = 2^{-(i-1)/2} \sum_{j=2^{i-1}+1}^{2^i} e_j$
in the unit ball of $\ell_2^{2^i}$.
Then $|T| x = |T| y = 2^{-1/2} \xi_i$, hence
$$
\|||T| x| \wedge ||T| y|\| = 2^{-1/2}.
$$
Thus, $|T|$ cannot be $c$-DP for $c < 1/2$.

To prove that $T$ is $\vr$-DP, it suffices to prove
(in light of Lemma \ref{l:disrum}) that,
for any $i > k$, $I + 2^{-i/2} S_i$ is $\vr/2$-DP. If $x, y \in \ball(\ell_2^{2^i})$
are disjoint, then
$$   \eqalign{
&
\Big| \big( I + 2^{-i/2} S_i \big) x \Big| \wedge
\Big| \big( I + 2^{-i/2} S_i \big) y \Big| \leq
\big( |x| + 2^{-i/2} |S_i x| \big) \wedge \big( |y| + 2^{-i/2} |S_i y| \big)
\cr
&
\leq
|x| \wedge 2^{-i/2} |S_i y|  + 2^{-i/2} |S_i x| \wedge |y| +
2^{-i/2} |S_i x| \wedge 2^{-i} |S_i y| ,
}  $$
hence
$$  \eqalign{
&
\left\| \Big| \big( I + 2^{-i/2} S_i \big) x \Big| \wedge
\Big| \big( I + 2^{-i/2} S_i \big) y \Big| \right\|
\cr
&
\leq
\min\big\{ 2^{-i/2} \|x\|, \|y\| \big\} +
\min\big\{ 2^{-i/2} \|y\|, \|x\| \big\} +
\min\big\{ 2^{-i/2} \|x\|, 2^{-i/2} \|y\|\big\}
\cr
&
\leq
3 \cdot 2^{-i/2} \leq
\vr/2 ,
}  $$
by our choice of $k$.

Finally, suppose $U \in B(\ell_2)$ is a disjointness preserving operator.
Let $V = |T| - U$, and suppose, for the sake of contradiction, that $\|V\| < 1/(3\sqrt{2})$.
As before, take
$x = 2^{-(i-1)/2} \sum_{j=1}^{2^{i-1}} e_j$ and
$y = 2^{-(i-1)/2} \sum_{j=2^{i-1}+1}^{2^i} e_j$. Then
$\|||T| x| \wedge ||T| y|\| = 2^{-1/2}$. On the other hand,
\begin{align*}
(|T| x) \wedge (|T| y)
&
=
(Ux + Vx) \wedge (Uy + Vy) \leq (|Ux| + |Vx|) \wedge (|Uy| + |Vy|)
\\
&
\leq
|Ux| \wedge |Vy| + |Vx| \wedge |Uy| + |Vx| \wedge |Vy| ,
\end{align*}
hence
$$
\frac{1}{\sqrt2} = \|||T| x| \wedge ||T| y|\| \leq \|Vy\| +2\|Vx\| \leq
3\|V\| < \frac{1}{\sqrt2} ,
$$
yielding a contradiction.
\end{proof}

\section{Lattice homomorphisms and operators preserving $p$-estimates}\label{s:lattice p estimates}

Let us consider now positive operators being ``almost lattice homomorphisms.'' We say that an operator $T \in B(E,F)$ is an \emph{$\vr$-lattice homomorphism}
(\emph{$\vr$-LH} for short) if, for any $x \in E$,
$$
\big\|\big|T|x|\big| - |Tx|\big\| \leq \vr \|x\|.
$$
A positive operator $T \in B(E,F)$ is said to be \emph{$\vr$-minimum preserving}
(\emph{$\vr$-MP}) if, for any
positive $x, y \in \ball(E)$,
$$
\| (Tx) \wedge (Ty) - T (x \wedge y)\| \leq \vr.
$$
It is known (see \cite[Section 3.1]{M-N}) that a positive operator
is disjointness preserving if and only if it is $0$-LH,
if and only if it is $0$-MP; in this case, it is a lattice homomorphism.
In the ``approximate'' case, the notions introduced above are connected
to being $\vr^\prime$-DP as well (for some $\vr^\prime$ depending on $\vr$).

\begin{proposition}\label{p:connections}
For Banach lattices $E$ and $F$, and $T \in B(E,F)$, the following holds:
\begin{enumerate}
\item
If $T$ is positive, then $T$ is $\vr$-MP if and only if it is $\vr$-DP.
\item
Any $\vr$-DP operator between real Banach lattices is a $2\vr$-LH.
\item
If $T$ is $\vr$-LH, then $T$ is $4\vr$-DP in the real case,
or $16\vr$-DP in the complex case.
If, in addition, $T$ is positive, then it is $\vr$-DP.
\end{enumerate}
\end{proposition}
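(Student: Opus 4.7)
The plan is to treat the three parts in sequence, relying on two Banach-lattice identities: the shift identity $(a+c)\wedge(b+c) = (a\wedge b)+c$, and the real identity $\bigl|\,|a+b|-|a-b|\,\bigr| = 2(|a|\wedge|b|)$, the latter following from $|a+b|\vee|a-b|=|a|+|b|$ and $|a+b|+|a-b|=2(|a|\vee|b|)$ via Krivine's functional calculus.

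For (1), with $T$ positive: given $x,y\in\ball(E)_+$, set $x'=x - x\wedge y$ and $y'=y - x\wedge y$, which are positive, disjoint, and in $\ball(E)$. The shift identity applied to $Tx = Tx' + T(x\wedge y)$ and $Ty = Ty' + T(x\wedge y)$ gives $(Tx)\wedge(Ty) - T(x\wedge y) = Tx'\wedge Ty' = |Tx'|\wedge|Ty'|$, whose norm is at most $\vr$ by the $\vr$-DP hypothesis on $(x',y')$; this yields $\vr$-MP. Conversely, $\vr$-MP applied to disjoint positive $x,y\in\ball(E)$ (where $x\wedge y=0$) gives $\|(Tx)\wedge(Ty)\|\leq\vr$, and the positive clause of Proposition~\ref{p:positive} delivers $\vr$-DP.

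For (2), given $x\in E$, use the Riesz decomposition $x = x_+ - x_-$ (with $x_+,x_-$ disjoint, positive, each of norm at most $\|x\|$), so that $T|x| = Tx_+ + Tx_-$ and $Tx = Tx_+ - Tx_-$. The real identity with $a=Tx_+$, $b=Tx_-$ produces $\bigl|\,\bigl|T|x|\bigr|-|Tx|\,\bigr| = 2(|Tx_+|\wedge|Tx_-|)$; taking norms and invoking $\vr$-DP on the disjoint pair $(x_+,x_-)$ yields $\bigl\|\,\bigl|T|x|\bigr|-|Tx|\,\bigr\|\leq 2\vr\|x\|$.

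For (3), the strategy is to verify the hypothesis of Proposition~\ref{p:positive} with constant $\vr$ on positive disjoint $x,y\in\ball(E)$ and invoke that proposition to obtain $4\vr$-DP (real) or $16\vr$-DP (complex). Set $z=x-y$: a real element with $|z|=x+y$, $\|z\|\leq 2$, $T|z|=Tx+Ty$, and $Tz=Tx-Ty$. In the real case the identity gives $2\bigl\|\,|Tx|\wedge|Ty|\,\bigr\| = \bigl\|\,\bigl|T|z|\bigr|-|Tz|\,\bigr\|\leq\vr\|z\|\leq 2\vr$, as required. The complex case is more delicate: the real identity degenerates into the one-sided inequality $\bigl|\,\bigl|T|z|\bigr|-|Tz|\,\bigr|\leq 2(|Tx|\wedge|Ty|)$, so the bound $\bigl\|\,|Tx|\wedge|Ty|\,\bigr\|\leq\vr$ is recovered instead by applying $\vr$-LH to several elements $z = x + \omega y$ with $|\omega|=1$ (each still satisfying $|z|=x+y$) and combining the bounds via the complex-lattice identity $|a|+|b|=\sup_{|\omega|=1}|a+\omega b|$. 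Finally, when $T$ is positive, $|Tx|\wedge|Ty|=Tx\wedge Ty$, and the real computation directly gives $\|Tx\wedge Ty\|\leq\vr$; Proposition~\ref{p:positive}'s positive clause then delivers $\vr$-DP. The main technical obstacle is the complex case, where the real identity fails (e.g., scalars $a=1$, $b=i$ give LHS $=0$ and RHS $=2$), so the argument must carefully juggle information from multiple applications of $\vr$-LH at different phases.
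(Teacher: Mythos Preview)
Your treatment of parts (1), (2), and the real and positive clauses of (3) is essentially the paper's own argument: the same decompositions ($x'=x-x\wedge y$ for (1), $x=x_+-x_-$ for (2), $z=x-y$ for (3)), the same lattice identity $\bigl|\,|a+b|-|a-b|\,\bigr|=2\bigl(|a|\wedge|b|\bigr)$ for real $a,b$, and the same final appeal to Proposition~\ref{p:positive}.

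Where you diverge is the complex clause of (3). You are right that the identity above fails for complex $a,b$ (only the inequality $\le$ survives, and that goes the wrong way here), and the paper's proof --- which simply invokes the computation ``as in part (2)'' and then applies Proposition~\ref{p:positive} to obtain the constants $4\vr$ and $16\vr$ --- does not comment on this point. Your proposed remedy, however, is not complete as stated. Applying the $\vr$-LH hypothesis to $z=x+\omega y$ does give
\[
\bigl\|\,|Tx+Ty|-|Tx+\omega Ty|\,\bigr\|\le 2\vr
\]
for each unimodular $\omega$, and one does have the identity
\[
2\bigl(|Tx|\wedge|Ty|\bigr)=\sup_{|\omega|=1}|Tx+\omega Ty|-\inf_{|\omega|=1}|Tx+\omega Ty|;
\]
but a lattice supremum over a continuum of elements, each at norm-distance $\le 2\vr$ from a fixed vector, need not itself remain close in norm. (Think of a family of narrow disjoint bumps in $L_1[0,1]$, each of mass $\vr$, indexed by their centers: every one has small norm, yet their supremum is the constant function $1$.) Discretizing to finitely many phases only trades this obstacle for an $N$-dependent loss. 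So the passage from the per-$\omega$ estimates to a bound on $\bigl\|\,|Tx|\wedge|Ty|\,\bigr\|$ requires an additional idea that your sketch does not supply.
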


\begin{proof}
(1) If $T$ is $\vr$-MP, then it is $\vr$-DP,
by Proposition \ref{p:positive}. To prove the converse, consider
$x, y \in \ball(E)_+$. Then $x_0 = x - x \wedge y$
and $y_0 = x - x \wedge y$ are positive and disjoint, and
$$
Tx \wedge Ty - T(x \wedge y) =
\big(Tx_0 + T(x \wedge y)\big) \wedge \big(Ty_0 + T(x \wedge y)\big) - T(x \wedge y) =
Tx_0 \wedge Ty_0 .
$$
If $T$ is $\vr$-DP, then
$\|Tx \wedge Ty - T(x \wedge y)\| = \|Tx_0 \wedge Ty_0\| \leq \vr$. \\

(2)
Suppose $T$ is a $\vr$-DP map between real Banach lattices.
Then, for any $x \in E$,
\begin{align*}
\big||Tx| - |T|x||\big|
&
=
\big||Tx_+-Tx_-| - |Tx_++Tx_-|\big| =2\big(|Tx_+|\wedge |Tx_-|\big).
\end{align*}
As $\max\{\|x_+\|, \|x_-\|\} \leq \|x\|$, and $x_+\perp x_-$ we have
$\| |Tx| - |T|x|| \| \leq 2 \vr \|x\|$.\\

(3) Suppose $T$ is $\vr$-LH, and pick disjoint positive $y, z \in \ball(E)$.
Let $x = y-z$. As in part (2), we obtain
$$
\| |Ty| \wedge |Tz| \| = \frac12 \||Tx| - |T|x||\| \leq \frac{\vr}{2} \|x\|
 \leq \frac{\vr}{2} (\|y\| + \|z\|) \leq \vr .
$$
To finish the proof, apply Proposition \ref{p:positive}.
\end{proof}

In the rest of the section we consider operators which almost preserve estimates
of the form $(|x|^p+|y|^p)^{1/p}$, and their connection with $\vr$-DP operators and lattice homomorphisms. This approach is in part motivated by Corollary \ref{c:arb number}. In particular, this will allow us to extend some of the previous results to the complex setting (see Proposition \ref{complexDP}.)

Given $1\leq p\leq \infty$, a positive operator between Banach lattices $T:E\rightarrow F$
is said to be \emph{$\vr$ preserving $p$-estimates} if for every $x,y\in E$ we have
$$
\Big\|T\Big(|x|^p+|y|^p\Big)^{\frac1p}-\Big(|Tx|^p+|Ty|^p\Big)^{\frac1p}\Big\|\leq \vr(\|x\|+\|y\|),
$$
while for $p=\infty$, we would have
$$
\Big\|T\Big(|x|\vee|y|\Big)-\Big(|Tx|\vee|Ty|\Big)\Big\|\leq \vr(\|x\|+\|y\|).
$$

It is easy to see that an operator is $\vr$ preserving $1$-estimates if and only if it is an $\vr$-lattice homomorphism. More generally, we have

\begin{proposition}\label{prop:p-estimates}
Let $E$ and $F$ be real Banach lattices. If $T \in B(E,F)$ is a positive
$\vr$-DP operator, then for every $1<p<\infty$, $T$ is
$K \log_2 (\vr (\|T\|+1))^{-1} (\vr (\|T\|+1))^{1/2}$ preserving $p$-estimates
(where $K$ is a universal constant).
\end{proposition}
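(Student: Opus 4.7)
The plan is to bound
$\Delta:=\|T(|x|^p+|y|^p)^{1/p}-(|Tx|^p+|Ty|^p)^{1/p}\|$
by combining two applications of Corollary~\ref{c:arb number} with a dyadic decomposition of the pair $(x,y)$ by the ratio $|y|/|x|$. First, I would reduce to positive $x,y$: the target depends only on $|x|,|y|$, and since $T$ is positive one has $|Tx|\leq T|x|$, the residual being absorbed by the $\vr$-DP hypothesis. Then, exactly as in the proof of Corollary~\ref{c:arb max min}, I approximate $x,y$ by simple functions with a common system of disjoint components $(e_i)$, writing $x=\sum_i\alpha_i e_i$, $y=\sum_i\beta_i e_i$ with $\alpha_i,\beta_i\geq 0$.

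For parameters $\eta,\tau>0$ to be optimized, I partition the indices dyadically: on the range $\tau\leq\beta_i/\alpha_i\leq\tau^{-1}$, let $I_k=\{i:\beta_i/\alpha_i\in[(1+\eta)^{k-1},(1+\eta)^k)\}$; for $\beta_i/\alpha_i$ outside this range (and for $\alpha_i=0$ or $\beta_i=0$) the quantity $(\alpha_i^p+\beta_i^p)^{1/p}$ is within $O(\tau^p)$ of $\max(\alpha_i,\beta_i)$ and is handled by Corollary~\ref{c:arb max min}. Set $X_k=\sum_{i\in I_k}\alpha_ie_i$, $Y_k=\sum_{i\in I_k}\beta_ie_i$, $r_k=(1+\eta)^{k-1/2}$, $c_k=(1+r_k^p)^{1/p}$. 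Then, pointwise via Krivine's calculus, $Y_k$ agrees with $r_kX_k$ up to a factor $1+O(\eta)$, and similarly $(X_k^p+Y_k^p)^{1/p}$ agrees with $c_kX_k$. Applying Corollary~\ref{c:arb number} to the disjoint family $\{c_kX_k\}$ yields
$$
\textstyle\big\|(\sum_k c_k^p(TX_k)^p)^{1/p}-T(\sum_k c_kX_k)\big\|\leq 256\vr\,\|z\|,
$$
where $z=(x^p+y^p)^{1/p}$, and $T(\sum_kc_kX_k)$ approximates $Tz$ to within $O(\|T\|(\eta+\tau))(\|x\|+\|y\|)$; this controls the ``outer'' half of $\Delta$.

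For the ``inner'' half I must compare $(\sum_kc_k^p(TX_k)^p)^{1/p}$ with $((Tx)^p+(Ty)^p)^{1/p}$. The pointwise inequality $(M+R)^p-M^p\leq p(M+R)^{p-1}R$, valid in a Banach lattice, applied with $M=\vee_kTX_k$ and $R_A=\sum_kTX_k-\vee_kTX_k$ (and its analogue for $TY_k$), together with $(TY_k)^p\approx r_k^p(TX_k)^p$, gives
$$
\textstyle (Tx)^p+(Ty)^p-\sum_k c_k^p(TX_k)^p\leq p\bigl[(Tx)^{p-1}R_A+(Ty)^{p-1}R_B\bigr]+O(\eta)\bigl((Tx)^p+(Ty)^p\bigr).
$$
Taking $1/p$-th roots, using subadditivity of $s\mapsto s^{1/p}$, and applying weighted Young's inequality
$$
f^{(p-1)/p}R^{1/p}\leq \tfrac{p-1}{p}\lambda f+\tfrac{1}{p}\lambda^{-(p-1)}R,
$$
followed by the norm, bounds the resulting error by $C(\lambda\|T\|+\lambda^{-(p-1)}\cdot 256\vr)(\|x\|+\|y\|)$. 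Here I invoke Corollary~\ref{c:arb number} a second time — applied to the disjoint families $\{X_k\}$ and $\{Y_k\}$ — to get $\|R_A\|\leq 256\vr\|x\|$ and $\|R_B\|\leq 256\vr\|y\|$.

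Optimizing $\lambda$ produces the geometric-mean bound $(\vr\|T\|^{p-1})^{1/p}$, which at $p=2$ is $\sqrt{\vr\|T\|}$; for general $p$ the $p$-dependent constants are absorbed into the universal $K$. Finally, balancing: the dyadic partition has $N\sim\eta^{-1}\log(\tau^{-1})$ levels, and the level-approximation errors sum to $O(\eta+\tau)(\|x\|+\|y\|)$. Choosing $\tau$ and $\eta$ of order $\sqrt{\delta}$ with $\delta=\vr(\|T\|+1)$ makes all error terms comparable and yields the advertised bound $K|\log_2\delta|\sqrt{\delta}\,(\|x\|+\|y\|)$. The principal obstacle is that the $\vr$-DP hypothesis only controls $\|R_A\|$ \emph{in norm} (not pointwise): the overlap cross-term $f^{p-1}R$ cannot be bounded linearly in $\vr$, forcing the geometric-mean loss $\sqrt{\vr\|T\|}$ through Young's inequality, while the logarithmic factor is the unavoidable cost of making $Y_k/X_k$ nearly constant on each level.
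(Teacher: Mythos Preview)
Your route is quite different from the paper's, which never passes through $p$-th powers at all: it uses the Krivine--calculus identity
\[
(|u|^p+|v|^p)^{1/p}=\bigvee\bigl\{\alpha|u|+\beta|v|:\alpha,\beta\in[0,1],\ \alpha^q+\beta^q=1\bigr\},
\]
discretizes the curve by $N+1$ points, and then invokes Lemma~\ref{l:sup n} (the recursive bound $\|T(\vee_{i=1}^n x_i)-\vee_i Tx_i\|\leq\vr n\lceil\log_2 n\rceil$). The logarithm comes precisely from this lemma, and the choice $N\sim(\vr(\|T\|+1))^{-1/2}$ balances the discretization error $O(\|T\|/N)$ against $O(\vr N\log N)$. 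The only $p$-dependence in the paper's estimate is a harmless factor $2^{1/p}\leq2$.

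Your proposal has two genuine gaps. First, the reduction to simple functions ``exactly as in the proof of Corollary~\ref{c:arb max min}'' uses Freudenthal's theorem, which needs the Principal Projection Property on $E$; that hypothesis is explicit in Corollary~\ref{c:arb max min} but is \emph{not} assumed here. Second, and more seriously, your Young-inequality step does not yield a universal constant. Optimizing $\lambda$ in $\lambda\|T\|+\lambda^{-(p-1)}\vr$ gives $\|T\|^{1-1/p}\vr^{1/p}$, and for $p>2$ this is strictly larger than $(\vr\|T\|)^{1/2}$ by the factor $(\|T\|/\vr)^{1/2-1/p}$, which blows up as $\vr\to0$; the exponents $1/p$ and $1-1/p$ are not ``$p$-dependent constants'' that can be absorbed into $K$. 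The same issue recurs when your $O(\eta)$ level errors become $O(\eta^{1/p})$ after extracting the $1/p$-th root. Relatedly, your final paragraph asserts a logarithmic factor but never actually derives one: the Corollary~\ref{c:arb number} bounds you invoke are independent of the number of levels, so nothing in your analysis produces a $\log$. The paper's linear-form representation sidesteps all of this by keeping the estimates first-order in $|x|,|y|$ throughout.
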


Recall that according to Proposition \ref{p:connections}(1), a positive operator is $\vr$-MP if and only if it is $\vr$-DP. Before giving the proof, we need a preliminary Lemma:

\begin{lemma}\label{l:sup n}
If $T \in B(E,F)$ is a positive $\vr$-MP operator, then, for any
$x_1, \ldots, x_n \in \ball(E_+)$, we have
$$
\big\| T (\vee_{i=1}^n x_i) - \vee_{i=1}^n T x_i \big\| \leq \vr \lceil \log_2 n \rceil n .
$$
\end{lemma}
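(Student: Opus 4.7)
The plan is a divide-and-conquer induction on $n$. The key ingredient is the elementary lattice identity $a \vee b = a + b - a \wedge b$, which, applied inside and outside $T$, yields
$$T(u \vee v) - (Tu) \vee (Tv) = (Tu) \wedge (Tv) - T(u \wedge v)$$
for any $u, v \in E_+$. Rescaling the $\vr$-MP hypothesis to $u/M$ and $v/M$, with $M = \max\{\|u\|, \|v\|\}$, then gives the scaled estimate
$$\|T(u \vee v) - (Tu) \vee (Tv)\| \leq M\vr. \qquad (\ast)$$

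First I would treat $n = 2^k$ by induction on $k$; the case of arbitrary $n$ follows by extending the family with zero vectors up to the next power of two. At the inductive step I would split $\{1, \ldots, 2^k\}$ into halves $S_1, S_2$ of size $2^{k-1}$, and set $u = \bigvee_{i \in S_1} x_i$, $v = \bigvee_{i \in S_2} x_i$. Positivity yields $u \leq \sum_{i \in S_1} x_i$, hence $\|u\|, \|v\| \leq 2^{k-1}$. Decomposing
$$T\Bigl(\bigvee_{i=1}^{2^k} x_i\Bigr) - \bigvee_{i=1}^{2^k} Tx_i = \bigl[T(u \vee v) - (Tu) \vee (Tv)\bigr] + \Bigl[(Tu) \vee (Tv) - \bigvee_{i=1}^{2^k} Tx_i\Bigr],$$
I would bound the first bracket by $2^{k-1}\vr$ via $(\ast)$, and the second bracket by twice the inductive bound, using the standard Lipschitz estimate $\|a \vee b - a' \vee b'\| \leq \|a - a'\| + \|b - b'\|$ valid in any Banach lattice. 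Writing $f(n)$ for the supremum of the left-hand side of the theorem over $x_1, \ldots, x_n \in \ball(E_+)$, this produces the recurrence $f(2^k) \leq 2^{k-1} \vr + 2 f(2^{k-1})$ with $f(1) = 0$, which telescopes to $f(2^k) \leq k \cdot 2^{k-1}\vr$. Padding a general family up to the next power of two then yields $f(n) \leq n \lceil \log_2 n\rceil \vr$.

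The only genuine subtlety is controlling the norms of the intermediate partial suprema, so that the scaling factor $M$ in $(\ast)$ does not blow up along the recursion tree; the crude bound $\bigvee_{i \in S} x_i \leq \sum_{i \in S} x_i$, valid because each $x_i \geq 0$, is precisely what supplies this control and keeps the arithmetic clean. I do not anticipate any further obstacle.
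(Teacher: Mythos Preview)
Your proposal is correct and matches the paper's proof essentially line for line: both reduce to powers of two, split in half, control $\|u\|,\|v\|$ via $\bigvee_{i\in S} x_i \le \sum_{i\in S} x_i$, and feed the same recurrence $f(2^k)\le 2^{k-1}\vr + 2f(2^{k-1})$. The only cosmetic difference is that you make the identity $T(u\vee v)-(Tu)\vee(Tv)=(Tu)\wedge(Tv)-T(u\wedge v)$ and the Lipschitz bound for $\vee$ explicit, whereas the paper leaves these implicit and phrases the second bracket via the order inequality $(Ty_0-z_0)\vee(Ty_1-z_1)\ge (Ty_0)\vee(Ty_1)-z_0-z_1$.
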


\begin{proof}
It suffices to show that, for any $m \in \NN$,
\begin{equation}
\label{eq:vee 2m}
\big\| T (\vee_{i=1}^{2^m} x_i) - \vee_{i=1}^{2^m} T x_i \big\|
 \leq \vr m 2^{m-1} .
\end{equation}
Proceed by induction on $m$.
The case of $m = 1$ is contained in the definition of $T$
being $\vr$-MP. To deal with the induction step, suppose
the statement holds for $m$, and prove it for $m+1$. For $j = 0,1$ let
$$
y_j = \vee_{i = 2^m j + 1}^{2^m j + 2^m} x_i  {\textrm{  and   }}
z_j = T y_j - \vee_{i = 2^m j + 1}^{2^m j + 2^m} T x_i .
$$
By the induction hypothesis, $\|z_j\| \leq \vr m 2^{m-1}$
(and it is easy to see that $z_j \geq 0$). Also,
$$
\|T (y_0 \vee y_1) - (T y_0) \vee (T y_1)\| \leq \vr \max\{\|y_0\|, \|y_1\|\} \leq 2^m \vr .
$$
We clearly have
\begin{align*}
T (\vee_{i=1}^{2^m} x_i) - \vee_{i=1}^{2^m} T x_i
&
=
T(y_0 \vee y_1) - (T y_0 - z_0) \vee (T y_1 - z_1)
\\
&
\leq
\big( T (y_0 \vee y_1) - (T y_0) \vee (T y_1) \big) + z_0 + z_1 ,
\end{align*}
hence,
$$
\big\| T (\vee_{i=1}^{2^m} x_i) - \vee_{i=1}^{2^m} T x_i \big\| \leq
\big\| T (y_0 \vee y_1) - (T y_0) \vee (T y_1) \big\| + (\|z_0\| + \|z_1\|).
$$
From the above,
$$
\big\| \vee_{i=1}^n T x_i - T (\vee_{i=1}^n x_i) \big\| \leq
2^m \vr + 2 \cdot m 2^{m-1} \vr = (m+1) 2^m \vr .
$$
\end{proof}

We also need a simple calculus result.

\begin{lemma}\label{l:calculus}
Suppose $\phi$ is a monotone continuous function on an interval $[a,b]$,
continuously differentiable on $(a,b)$. Then the arclength of the graph
of $\phi$ does not exceed $b-a + |\phi(b) - \phi(a)|$.
\end{lemma}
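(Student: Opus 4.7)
The plan is to reduce the arclength to a one-variable integral inequality. Since $\phi$ is monotone on $[a,b]$ and continuously differentiable on $(a,b)$, the derivative $\phi'$ has constant sign, and the arclength of the graph is given by
$$L = \int_a^b \sqrt{1 + \phi'(t)^2}\, dt.$$
I would immediately reduce to the case where $\phi$ is nondecreasing (replace $\phi$ by $-\phi$ if necessary, which preserves both sides of the claimed inequality).

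The key pointwise estimate is the elementary inequality $\sqrt{1+s^2} \leq 1+s$ for all $s\geq 0$, which follows on squaring since $(1+s)^2 = 1+2s+s^2 \geq 1+s^2$. Applying this with $s = \phi'(t)\geq 0$ and integrating gives
$$L \leq \int_a^b \bigl(1+\phi'(t)\bigr)\, dt = (b-a) + \bigl(\phi(b)-\phi(a)\bigr) = (b-a) + |\phi(b)-\phi(a)|,$$
where the last equality uses monotonicity.

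There is no real obstacle; the only mild subtlety is justifying the fundamental theorem of calculus at the endpoints. Since $\phi$ is continuous on $[a,b]$ and $C^1$ on $(a,b)$, for any $a < a' < b' < b$ we have $\int_{a'}^{b'} \phi'(t)\, dt = \phi(b') - \phi(a')$, and letting $a'\downarrow a$, $b'\uparrow b$ together with monotone convergence (recall $\phi'\geq 0$) yields $\int_a^b \phi'(t)\, dt = \phi(b)-\phi(a)$, allowing $\phi'$ to be nonintegrable only if $\phi(b)-\phi(a) = +\infty$, in which case both sides of the lemma are infinite and the statement is trivial.
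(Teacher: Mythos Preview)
Your proof is correct and follows essentially the same approach as the paper: both use the pointwise inequality $\sqrt{1+s^2}\le 1+|s|$ and then integrate, invoking monotonicity to identify $\int_a^b |\phi'(t)|\,dt$ with $|\phi(b)-\phi(a)|$. The only cosmetic difference is that the paper works directly with $|\phi'|$ rather than first reducing to the nondecreasing case, and it omits the endpoint justification you supply (which, incidentally, never actually leads to the ``both sides infinite'' scenario since $\phi$ is assumed continuous on $[a,b]$).
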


\begin{proof}
For the arclength in question we have
$$
L = \int_a^b \sqrt{1 + (\phi^\prime(t))^2} \, dt \leq
\int_a^b \big(1 + |\phi^\prime(t)|\big) \, dt .
$$
The monotonicty of $\phi$ implies
$\int_a^b |\phi^\prime(t)| \, dt = |\phi(b) - \phi(a)|$.
\end{proof}

\begin{proof}[Proof of Proposition \ref{prop:p-estimates}]
For any $u$ and $v$ in a Banach lattice, if $1/p+1/q=1$, then (see \cite[1.d]{LT2})
$$
\big(|u|^p + |v|^p \big)^{1/p} = \bigvee \big\{ \alpha |u| + \beta |v| :
 \alpha , \beta \in [0,1] , \alpha^q + \beta^q = 1 \big\} .
$$

For any $N \in \NN$, let $\{(x_j,y_j):j=0,1,\ldots N\}$
be a collection of points satisfying $x_j,y_j\in[0,1]$, $x_j^q+y_j^q=1$
and such that for any $(\alpha,\beta)$ with $\alpha,\beta\in[0,1]$ and
$\alpha^q+\beta^q=1$, there exists $0\leq j\leq N$ for which
$$
\max\{|\alpha-x_{j}|,|\beta-y_{j}|\}\leq \frac{C_q}{N},
$$
where $C_q$ is the length of the curve $\{(x,y):x,y\in[0,1],\,x^q+y^q=1\}$.
By Lemma \ref{l:calculus}. $C_q \leq 2$.
Thus, for any $(\alpha,\beta)$ with $\alpha,\beta\in[0,1]$ and
$\alpha^q+\beta^q=1$ there exists $j$ so that
$$
\alpha |u| + \beta |v| \leq
 \big( x_j |u| + y_j|v| \big) +
 \frac{2}{N} \big( |u| + |v| \big) .
$$
Taking the supremum, we obtain
$$\bigvee \big\{ \alpha |u| + \beta |v| :
 \alpha , \beta \in [0,1] , \alpha^q + \beta^q = 1 \big\}
\leq
\bigvee_{j=0}^N \big( x_j |u| + y_j |v| \big) +
 \frac{2}{N} \big( |u| + |v| \big) ,$$

and by the triangle inequality we get
\begin{equation}
\label{eq:sqrt est}
\Big\| \big( |u|^p + |v|^p \big)^{1/p} -
 \bigvee_{j=0}^N \big( x_j |u| + y_j |v| \big) \Big\|
  \leq \frac{2}{N} \big(\|u\| + \|v\|\big) .
\end{equation}

By Lemma \ref{l:sup n},
$$
\Big\| T \bigvee_{j=0}^N \big( x_j |x| + y_j |y| \big) -
\bigvee_{j=0}^N \big( x_j T|x| + y_j T|y| \big) \Big\| \leq
\vr 2^{\frac1p} \lceil \log_2 (N+1) \rceil (N+1) .
$$

By Proposition \ref{p:connections} (2), $T$ is $\vr$-LH, hence
$\|T|x| - |Tx|\| , \|T|y| - |Ty|\| \leq \vr$, hence
$$
\Big\| \bigvee_{j=0}^N \big( x_j T|x| + y_j T|y| \big) -
\bigvee_{j=0}^N \big( x_j |Tx| + y_j |Ty| \big) \Big\|
 \leq 2(N+1)\vr .
$$

Thus, by the triangle inequality,
\begin{align*}
&
\Big\| T \big( |x|^p + |y|^p \big)^{1/p} - \big( |Tx|^p + |Ty|^p \big)^{1/p} \Big\|
\leq
\\
&
\Big\| T \big( |x|^p + |y|^p \big)^{1/p} -
T \bigvee_{j=0}^N \big( x_j |x| + y_j |y| \big) \Big\|
+
\\
&
\Big\|
T \bigvee_{j=0}^N \big( x_j |x| + y_j |y| \big) -
 \bigvee_{j=0}^N \big( x_j T|x| + y_j T|y| \big)
\Big\|
+
\\
&
\Big\|  \bigvee_{j=0}^N \big( x_j T|x| + y_j T|y| \big) -
 \bigvee_{j=0}^N \big( x_j |Tx| + y_j |Ty| \big)
 \Big\|
+
\\
&
\Big\|  \bigvee_{j=0}^N \big( x_j |Tx| + y_j |Ty| \big) -
  \big( |Tx|^p + |Ty|^p \big)^{1/p}  \Big\|
\leq
\\
&
\frac{4\|T\|}{N} + \vr(N+1)\Big(2^{\frac1p} \lceil \log_2 (N+1) \rceil + 2\Big).
\end{align*}
To finish the proof, select $N \sim (\vr (\|T\|+1))^{-1/2}$.
\end{proof}

As a consequence of this result, we can now give the complex version of Proposition \ref{p:connections}(2). We follow \cite{AA} in representing a complex Banach lattice $X$ as a complexification of its real part $X_{\RR}$. More precisely, any $x \in X$ can be represented as $x = a + \iota b$, with $a,b \in X_{\RR}$. Then $|x| = (a^2 + b^2)^{1/2}$.

\begin{proposition}\label{complexDP}
Suppose $E$ and $F$ are complex Banach lattices, and $T \in B(E,F)$ is a positive $\vr$-DP operator. Then $T$ is a
$C \log_2 (\vr (\|T\|+1))^{-1} (\vr (\|T\|+1))^{1/2}$-LH (with $C$ a universal constant).
\end{proposition}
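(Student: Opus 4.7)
The plan is to reduce the complex case to the real case by applying Proposition \ref{prop:p-estimates} with $p=2$. Following the representation from \cite{AA}, I would decompose $x = a + \iota b$ with $a, b \in E_{\RR}$, so that via Krivine functional calculus one has the identity $|x| = (a^2 + b^2)^{1/2}$; correspondingly, $Tx = Ta + \iota Tb$ and $|Tx| = ((Ta)^2 + (Tb)^2)^{1/2}$. Since $T$ is positive, it restricts to $T|_{E_{\RR}} : E_{\RR} \to F_{\RR}$, and one has $|T|x| = T|x| = T(a^2+b^2)^{1/2}$. The quantity to estimate is therefore
$$
\big\| T(a^2+b^2)^{1/2} - ((Ta)^2 + (Tb)^2)^{1/2} \big\|,
$$
which is precisely the defect in the $2$-estimate preservation for the pair $(a,b)$ in the real setting.

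Next I would verify that $T|_{E_{\RR}}$ is positive and $\vr$-DP as a map between real Banach lattices: any pair of disjoint real elements is a fortiori disjoint in $E$, so the hypothesis transfers. Proposition \ref{prop:p-estimates} applied with $p=2$ then gives
$$
\big\| T(a^2+b^2)^{1/2} - ((Ta)^2 + (Tb)^2)^{1/2} \big\| \leq \eta\,(\|a\| + \|b\|),
$$
where $\eta = K \log_2(\vr(\|T\|+1))^{-1}(\vr(\|T\|+1))^{1/2}$.

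Finally, I would use the standard inequality $|a|, |b| \leq |x|$ valid in any complex Banach lattice to conclude $\|a\| + \|b\| \leq 2\|x\|$. Chained with the previous display, this yields $\big\||T|x| - |Tx|\big\| \leq 2\eta \|x\|$, which is the $\vr$-LH bound with constant $C = 2K$. I do not expect a genuine obstacle here: everything is a direct translation between the complex-lattice language and the real $2$-estimate preservation established in Proposition \ref{prop:p-estimates}. The only mild point worth double-checking is the interpretation of the squaring in the functional calculus (so that $|x|^2 = a^2+b^2$ and $|Tx|^2 = (Ta)^2 + (Tb)^2$ really hold in the lattice sense), but this is standard material from \cite{AA}.
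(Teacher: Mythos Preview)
Your proposal is correct and follows essentially the same route as the paper: decompose $x = a + \iota b$, rewrite $\|T|x| - |Tx|\|$ as the defect in the $2$-estimate for the real pair $(a,b)$, and invoke Proposition~\ref{prop:p-estimates} with $p=2$ on $T|_{E_{\RR}}$. Your version is in fact slightly more direct: you apply Proposition~\ref{prop:p-estimates} straight to the $\vr$-DP hypothesis on $T|_{E_{\RR}}$, whereas the paper first passes through Proposition~\ref{p:connections} to record that $T|_{E_{\RR}}$ is $2\vr$-LH before invoking Proposition~\ref{prop:p-estimates} (a detour that is not strictly needed, since the latter already takes an $\vr$-DP operator as input); you also make explicit the bound $\|a\| + \|b\| \leq 2\|x\|$, which the paper absorbs into the constant.
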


\begin{proof}
Consider $T \in B(E,F)$ as in the statement,
and show that, for any $x \in \ball(E)$,
$$
\|T|x| - |Tx|\| \leq C \log_2 (\vr (\|T\|+1))^{-1} (\vr (\|T\|+1))^{1/2} .
$$

By Proposition \ref{p:connections}(1,3), $T|_{E_{\RR}}$ is $2\vr$-LH,
hence by Proposition \ref{prop:p-estimates}, it follows that $T|_{E_{\RR}}$ is
$K \log_2 (2\vr (\|T\|+1))^{-1} (2\vr (\|T\|+1))^{1/2}$ preserving 2-estimates.

Now, write $x = a + \iota b$, where $a$ and $b$ belong to $E_{\RR}$. We have that
\begin{eqnarray*}
\|T|x| - |Tx|\| &= &\Big\| T \big( a^2 + b^2 \big)^{1/2} - \big( (Ta)^2 + (Tb)^2 \big)^{1/2} \Big\|\\
& \leq& 2K_2 \log_2 (2\vr (\|T\|+1))^{-1} (2\vr (\|T\|+1))^{1/2}.
\end{eqnarray*}

\end{proof}

Motivated by Lemma \ref{l:sup n} we will consider next a strengthening of operators $\vr$ preserving $\infty$-estimates. For $\vr > 0$, we say that a positive operator $T \in B(E,F)$ ($E$ and $F$ are
Banach lattices) is $\vr$-\emph{strongly maximum preserving}
($\vr$-\emph{SMP} for short) if, for
any $x_1, \ldots, x_n \in \ball(E)_+$, we have
$$
\|T(\vee_{i=1}^n x_i) - \vee_{i=1}^n T x_i\| \leq \vr.
$$

We say that $T \in B(E,F)$ is a $\vr$-\emph{strongly disjointness preserving}
($\vr$-\emph{SDP}) if, for
any mutually disjoint $x_1, \ldots, x_n \in \ball(E)$, we have
$$
\|\sum_{i=1}^n |T x_i| - \vee_{i=1}^n |T x_i|\| \leq \vr.
$$
Clearly any $\vr$-SMP positive operator is also $\vr$-SDP.

Note that these properties are much harder to satisfy.
For instance, it is easy to see that any operator $T$ is $\|T\|$-DP.
On the other hand, for a pair of Banach lattices $(E,F)$, the following are equivalent:
(1) $E$ is lattice isomorphic to an $M$-space, and (2) There exists $C > 0$
so that any $T \in B(E,F)_+$ is $C \|T\|$-SDP.

To prove $(1) \Rightarrow (2)$, suppose $E$ is an $M$-space.
Fix a positive operator $T : E \to F$, and consider mutually disjoint
$x_1, \ldots, x_n \in \ball(E)$. Then
$$
\big\| \sum_i |T x_i| \big\| \leq \big\| T \sum_i |x_i| \big\| \leq
\|T\| \| \sum_i |x_i| \| = \|T\| \max_i \|x_i\| ,
$$
which implies (2).

For $(2) \Rightarrow (1)$, recall that, by \cite[Sections 2.1, 2.8]{M-N},
the following are equivalent: (i) $E$ is lattice isomorphic to an $M$-space;
(ii) there exists a constant $K$ so that the inequality
$\|\sum_i x_i\| \leq K \max_i \|x_i\|$ holds whenever $x_1, \ldots, x_n \in E$
are mutually disjoint; (iii) there exists a constant $K$ so that the inequality
$\|\sum_i x_i^*\| \geq K^{-1} \sum_i \|x_i^*\|$ holds whenever
$x_1^*, \ldots, x_n^* \in E^*$ are mutually disjoint.
Suppose now that (1) fails, and show that (2) fails as well.

If (1) fails, then for every $C > 1$ there exist mutually disjoint non-zero
$x_1^*, \ldots, x_n^* \in E^*_+$, satisfying
$$
\Big\|\sum_i x_i^*\Big\| < (C+2)^{-1} \sum_i \|x_i^*\|.
$$
Without loss of generality, we can assume $1 = \max_i \|x_i^*\|$.
Applying \cite[Proposition 1.4.13]{M-N} to $x_i^*/\|x_i^*\|$, we see that, for any
$\sigma > 0$, there exist mutually disjoint $x_1, \ldots, x_n \in \ball(E)_+$
so that $\langle x_i^*, x_i \rangle > \|x_i^*\| - n^{-1}$ for any $i$.

Now let $x^* = \sum_i x_i^*$, pick a norm one positive $y \in F$, and define
$T : E \to \spn[y] \subset F : x \mapsto \langle x^*, x \rangle y$.
Clearly $\|T\| = \|x^*\|$. On the other hand, $\max_i \|x_i\| = 1$,
$\vee_i Tx_i \leq y$, and
$$
\sum_i Tx_i = \big(\sum_i \sum_j \langle x_i^*, x_j \rangle \big) y \geq
\Big(\sum_i \|x_i^*\| - 1\Big) y .
$$
Consequently, if $T$ is $\gamma \|T\|$-SDP, then
$$
\gamma \geq \frac{\sum_i \|x_i^*\| - 2}{\|\sum_i x_i^*\|} > C .
$$
As $C$ can be arbitrarily large, we are done.

\begin{theorem}\label{t:SMP}
Suppose $E$ and $F$ are Banach lattices,
and $T \in B(E,F)$ is a positive $\vr$-SDP operator.
\begin{enumerate}
\item
Suppose $E$ is finite dimensional. Then there exists
a disjointness preserving $S \in B(E,F)$ so that $0 \leq S \leq T$,
and $\|T - S\| \leq 2\vr$.
\item
Suppose the order on $E$ is determined by its $1$-unconditional basis,
while $F$ has the Fatou Property
with constant $\fatou$. Then there exists a disjointness preserving
$S \in B(E,F)$ so that $0 \leq S \leq T$, and $\|T - S\| \leq 2 \fatou \vr$.
\end{enumerate}
\end{theorem}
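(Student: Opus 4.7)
The plan is to use the ``hard'' disjoint cover $g_i := (f_i - \vee_{j \neq i} f_j)_+$, where $f_i := T\delta_i$; the stronger $\vr$-SDP hypothesis is what makes this crude splitting efficient enough to avoid the soft $\phi_n$ construction of Theorem \ref{t:c_0}.

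For part (1), since $E$ is finite dimensional it admits a $1$-unconditional basis of atoms $(\delta_i)_{i=1}^N$ (rescale so $\|\delta_i\| = 1$). All finite suprema exist, so $g_i$ is well defined, $0 \leq g_i \leq f_i$, and $g_i \wedge g_j = 0$ for $i \neq j$; hence $S\delta_i := g_i$ defines a positive, disjointness preserving operator with $0 \leq S \leq T$. To bound $\|T - S\|$, I would write $x = \sum_i \alpha_i \delta_i \in \ball(E)_+$, use $1$-unconditionality with normalization to get $\max_i \alpha_i \leq \|x\|$, and observe $\sum_i \alpha_i(f_i - g_i) \leq (\max_i \alpha_i) \sum_i (f_i - g_i)$ pointwise. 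The crux is the elementary pointwise estimate
\[
\sum_{i=1}^N \bigl( t_i - (t_i - \vee_{j \neq i} t_j)_+ \bigr) \;\leq\; 2\Bigl(\sum_{i=1}^N t_i - \vee_{i=1}^N t_i\Bigr) \qquad (t_i \geq 0),
\]
which, after sorting $t_1 \geq \ldots \geq t_N$, reduces to $2t_2 + t_3 + \ldots + t_N \leq 2(t_2 + \ldots + t_N)$. Krivine's functional calculus lifts this to $\sum_i(f_i - g_i) \leq 2(\sum_i f_i - \vee_i f_i)$, and the $\vr$-SDP property applied to the disjoint family $\delta_1, \ldots, \delta_N$ gives $\|\sum_i f_i - \vee_i f_i\| \leq \vr$; combining yields $\|T-S\| \leq 2\vr$.

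For part (2) the only obstacle is that $\vee_{j \neq i} f_j$ need not exist when the index set is infinite, so one works with finite-stage approximations $g_i^{(n)} := (f_i - \vee_{j \leq n,\, j \neq i} f_j)_+$ for $n \geq i$. The sequence $(g_i^{(n)})_n$ is decreasing and bounded below by $0$, so $g_i := \wedge_n g_i^{(n)}$ exists by the $\sigma$-Dedekind completeness of $F$ (which follows from the Fatou property). Disjointness and $g_i \leq f_i$ persist in the limit, so $S\delta_i := g_i$ extends by continuity to a positive operator with $0 \leq S \leq T$. For a finitely supported positive $x_N = \sum_{i \leq N} \alpha_i \delta_i$, Lemma \ref{l:sup} interchanges the increasing supremum with the finite sum:
\[
\sum_{i=1}^N (f_i - g_i) \;=\; \bigvee_n \sum_{i=1}^N (f_i - g_i^{(n)}) \;\leq\; \bigvee_n 2\Bigl(\sum_{i=1}^n f_i - \vee_{i \leq n} f_i\Bigr),
\]
where the last inequality uses $\sum_{i=1}^N(f_i - g_i^{(n)}) \leq \sum_{i=1}^n(f_i - g_i^{(n)})$ together with the pointwise estimate of part (1) applied to the first $n$ atoms. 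Each finite-$n$ term has norm at most $2\vr$ by $\vr$-SDP, and the Fatou constant $\fatou$ then gives $\|\sum_{i=1}^N(f_i - g_i)\| \leq 2\fatou\vr$. Combining with $\max_i \alpha_i \leq \|x_N\|$ and density of finitely supported elements (the $1$-unconditional basis is Schauder) yields $\|T-S\| \leq 2\fatou\vr$.

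The main delicate point is the supremum-interchange in part (2): I need to verify that $(\sum_{i=1}^N(f_i - g_i^{(n)}))_n$ is indeed an increasing net to which Lemma \ref{l:sup} and the Fatou norm estimate apply in parallel, and that the pointwise inequality survives both the supremum over $n$ and the eventual continuous extension from finitely supported $x$ to arbitrary positive $x \in \ball(E)$.
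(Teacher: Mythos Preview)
Your proof is correct and follows the same structural outline as the paper's: define disjoint approximants $g_i$ of the $f_i = T\delta_i$, verify the pointwise inequality $\sum_i(f_i - g_i) \leq 2(\sum_i f_i - \vee_i f_i)$ via Krivine calculus, and in part~(2) pass to the limit using finite-stage approximations, Lemma~\ref{l:sup}, and the Fatou property. The only substantive difference is your choice of the ``hard'' cover $g_i = (f_i - \vee_{j\neq i} f_j)_+$ in place of the paper's ``soft'' interpolant $\phi_n$ carried over from Theorem~\ref{t:c_0}; since both yield the identical scalar bound $2t_2 + t_3 + \cdots + t_N \leq 2(t_2 + \cdots + t_N)$ after sorting, this is a harmless (and arguably cleaner) simplification --- the $\phi_n$ machinery was needed in Theorem~\ref{t:c_0} only to interface with the probabilistic Lemma~\ref{l:max}, which is irrelevant under the stronger SDP hypothesis here.
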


\begin{remark}\label{r:SDP vs DP}
By Corollary \ref{c:arb number}, if a positive operator $T$ is
$\vr$-DP, then for any mutually disjoint
$x_1, \ldots, x_n \in \ball(E)$, we have
$\|\sum_{i=1}^n |T x_i| - \vee_{i=1}^n |T x_i|\| \leq 256 \vr \|\sum_i x_i\|$.
In particular this holds for the operator $T$ from Proposition \ref{p:main}.
However, in light of Theorem \ref{t:SMP}, if $T$ is $\sigma$-SDP, then
$\sigma > 1/4$. Thus, there is no function $f : (0,\infty) \to (0,\infty)$, with
$\lim_{t \to 0} f(t) = 0$, so that being $\vr$-DP implies being $f(\vr)$-SDP.
\end{remark}

\begin{proof}
(1)
It is well known (see e.g. \cite[Corollary 4.20]{Schw})
that $X$ has a basis of atoms, which we denote by $(\delta_i)_{i=1}^n$
($n = \dim X$), and that form a 1-unconditional basis. Use scaling to assume that $T$ is contractive.
Let $f_i = T \delta_i$
As in the proof of Theorem \ref{t:c_0}, define the function $\phi_n : \RR^n \to \RR$ by setting
$$
\phi_n : (t_1, \ldots, t_n) \mapsto
\left\{ \begin{array}{ll}
   0                             &   t_1 \leq \vee_{i=2}^n |t_i|,      \\
   2(t_1 - \vee_{i=2}^n |t_i|)
       &   \vee_{i=2}^n |t_i| \leq t_1 \leq 2 \vee_{i=2}^n |t_i|,      \\
   t_1                           &   t_1 \leq \vee_{i=2}^n |t_i|.
\end{array} \right.
$$
For $1 \leq i \leq n$ set
$$
g_i = \phi_n(f_i, f_{i+1}, \ldots, f_n, f_1, \ldots, f_{i-1}).
$$
We claim that the operator $S : E \to F : \delta_i \mapsto g_i$
has the desired properties.

Note that $0 \leq \phi_n(t_1, \ldots, t_n) \leq t_1$, hence $0 \leq g_i \leq f_i$,
which shows that $0 \leq S \leq T$.

To show that $S$ is disjointness preserving, consider $i \neq j$. Note that, for any
$(t_1, \ldots, t_n) \in \RR^n$,
$$
\phi_n(t_i, t_{i+1}, \ldots, t_n, t_1, \ldots, t_{i-1}) \wedge
\phi_n(t_j, t_{j+1}, \ldots, t_n, t_1, \ldots, t_{j-1}) = 0 ,
$$
hence $g_i$ and $g_j$ are disjoint.

Now note that
$$
\|T-S\| \leq \|(T-S) \sum_{i=1}^n \delta_i\| = \|\sum_{i=1}^n (f_i-g_i)\| .
$$
It therefore suffices to show that
\begin{equation}
\label{eq:fun leq}
\sum_{i=1}^n (f_i-g_i) \leq 2 \big( \sum_{i=1}^n f_i - \vee_{i=1}^n f_i \big) .
\end{equation}
Indeed, applying the definition of $\vr$-SDP to $x_i = \delta_i$, we obtain
$$
\big\| \sum_{i=1}^n f_i  - \vee_{i=1}^n f_i \big\| \leq \vr .
$$
To establish \eqref{eq:fun leq}, by functional calculus it suffices to show
that, for any $t_1, \ldots, t_n \in \RR^n$,
$$  \eqalign{
\big(t_1 - \phi_n(t_1, t_2, \ldots, t_n)\big)
+
&
\big(t_2 - \phi_n(t_2, t_3, \ldots, t_n, t_1)\big) +
\ldots
\cr
+
&
\big(t_n - \phi_n(t_n, t_1, \ldots, t_{n-1})\big) \leq
2 \big( \sum_{i=1}^n t_i - \bigvee_{i=1}^n t_i \big) .
}  $$
By relabeling, we can assume that $t_1 \geq t_2 \geq \ldots \geq t_n$.
In the left hand side, the $i$-th term equals $t_i$, while the first
term doesn't exceed $t_2$. Thus, the left hand side doesn't exceed
$2t_2 + t_3 + \ldots + t_n$
On the other hand, the right hand side equals $2 \sum_{i=2}^n t_i$.

(2) Now denote the basis of $X$ by $(\delta_i)_{i=1}^\infty$,
and set $f_i = T \delta_i$. With the notation of (1), set
$g_i^{(n)} = \phi_n(f_i, f_{i+1}, \ldots, f_n, f_1, \ldots, f_{i-1})$.
Note that, for any $t_1, \ldots, t_{n+1} \in \RR$, we have
$$
\phi_n(t_1, t_2, \ldots, t_n) =
\phi_{n+1}(t_1, t_2, \ldots, t_n, 0) \geq
\phi_{n+1}(t_1, t_2, \ldots, t_n, t_{n+1}) ,
$$
hence we have
$$
f_i \geq g_i^{(i)} \geq g_i^{(i+1)} \geq g_i^{(i+2)} \geq \ldots \geq 0 .
$$
By the $\sigma$-Dedekind completeness of $F$, $g_i = \lim_n g_i^{(n)}$
exists for every $i$. Define $S : E \to F$ by setting $S \delta_i = g_i$.
Clearly $0 \leq S \leq T$. Furthermore, $S$ is disjointness preserving.
Indeed, if $i \neq j$, and $n \geq i \vee j$, then for any
$t_1, \ldots, t_n \in \RR$,
$$
\phi_n(t_i, t_{i+1}, \ldots, t_n, t_1, \ldots, t_{i-1}) \wedge
\phi_n(t_j, t_{j+1}, \ldots, t_n, t_1, \ldots, t_{j-1}) = 0 ,
$$
hence $g_i^{(n)} \wedge g_j^{(n)} = 0$.

To estimate $\|T-S\|$, note that
$$
\|T-S\| \leq \sup_m \big\| (T-S) \sum_{i=1}^m \delta_i \big\| =
\sup_m \big\| \sum_{i=1}^m (f_i - g_i) \big\| .
$$
For each $m$,
$$
\big\| \sum_{i=1}^m (f_i - g_i) \big\| \leq
\fatou \sup_n \big\| \sum_{i=1}^m (f_i - g_i^{(n)}) \big\| .
$$
By the proof of part (1),
$$
\big\| \sum_{i=1}^m (f_i - g_i^{(n)}) \big\| \leq 2 \vr ,
$$
and the proof is complete.
\end{proof}

\end{document}